\theoremstyle{plain}
\newtheorem{theorem}{Theorem}
\newtheorem{lemma} [theorem] {Lemma}
\newtheorem{remark}[theorem]{Remark}
\newtheorem{corollary}[theorem]{Corollary}
\newtheorem{proposition}[theorem]{Proposition}
\numberwithin{theorem}{section} \numberwithin{equation}{section}
\newcommand{\A}{\mathcal{A}}
\newcommand{\F}{\mathcal{F}}
\newcommand{\R}{\mathcal{R}}
\newcommand{\E}{\mathcal{E}}
\newcommand{\K}{\mathcal{K}}
\newcommand{\N}{\mathcal{N}}
\newcommand{\Y}{\mathcal{Y}}
\newcommand{\X}{\mathcal{X}}
\newcommand{\cP}{\mathcal{P}}
\newcommand{\Ker}{\rm{Ker\;}}
\newcommand{\supp}{\rm{supp\;}}
\newcommand{\cS}{\mathcal{S}}
\newcommand{\mm}{\medskip}
\newcommand{\D}{\mathcal{D}}
\newcommand{\T}{\mathcal{T}}
\newcommand{\cL}{\mathcal{L}}
\newcommand{\I}{\mathcal{I}}
\newcommand{\fA}{\mathfrak{A}}
\newcommand{\m}{\medskip}
\newcommand{\fX}{\mathfrak{X}}
\newcommand{\fN}{\mathfrak{N}}
\newcommand{\M}{\mathcal{M}}
\newcommand{\s}{\subset}
\newcommand{\B}{\mathcal{B}}
\newcommand{\cB}{\mathcal{B}}
\newcommand{\ot}{\otimes}
\begin{document}

\title{On algebras generated by inner derivations}
\author{Tatiana Shulman, Victor Shulman}
\date{\today}
\address{Department of Mathematics, Moscow State Aviation Technological Institute, Orshanskaya 3,
Moscow 121552, Russia}

\email{tatiana$_-$shulman@yahoo.com}
\address{Department of Mathematics, Vologda State Technical University, 15 Lenina Street, Vologda 160000, Russia}

\email{shulman$_-$v@yahoo.com}

\subjclass[2000]{47 L70; 46 H25}

\keywords {Banach bimodule, Lie submodule, inner derivation,
projective tensor product, Varopoulos algebra}

\date{}

\maketitle

\begin{abstract} We look for an effective description of the algebra $D_{Lie}(\X,B)$
of operators on a  bimodule $\X$ over an algebra $B$, generated by
all operators $x\to ax-xa$, $a\in B$. It is shown that in some
important examples $D_{Lie}(\X,B)$ consists of all elementary
operators $x\to \sum_i a_ixb_i$ satisfying the conditions $\sum_i
a_ib_i = \sum_i b_ia_i = 0$. The Banach algebraic versions of these
results are also obtained and applied to the description of closed
Lie ideals in some Banach algebras, and to the proof of a density
theorem for Lie algebras of operators on Hilbert space.
\end{abstract}

\section{Introduction}

 Let $B$ be an algebra.  A subspace $C$ of $ B$ is
 called a Lie ideal of $B$ if $ax-xa \in C$ for all $a\in B$, $x\in C$.
The structure of Lie ideals of an associative algebra attracted the
attention of algebraists and Banach-algebraists since seminal works
of Herstein and Jacobson-Rickart (\cite{Hers}, \cite{JR}). A more
general subject is the structure of {\it Lie submodules} in
arbitrary $B$-bimodule $\X$ --- the subspaces of $\X$, defined in
formally identical way. For example Lie ideals of each algebra that
contains $B$ as a subalgebra are Lie submodules over $B$. (This
example is in fact most general: if $\X$ is a $B$-bimodule and $\Y$
is a Lie submodule of $\X$ then one can introduce a product
$(a_1\oplus x_1)(a_2\oplus x_2) = a_1a_2\oplus (a_1x_2+x_1a_2)$ in
$A = B\oplus \X$ and identify $\Y$ with a Lie ideal $0\oplus \Y$ of
$A$).

If one denotes by $L_a$ and $R_a$ respectively the operators of the
left and right multiplication by $a$, then one can say that Lie
submodules are invariant subspaces for the set $\Xi(\X,B)$  of all
{\it inner derivations} $\delta_a = L_a - R_a$, $a\in B$.

Note that the structure of the algebra $\fA$, generated by some set
$\cP$ of operators on a linear space $\X$, gives useful information
about the invariant subspaces of $\cP$. It suffices to say that all
invariant subspaces are sums of cyclic subspaces $\fA x$, $x\in \X$.
Of course, the usefulness of such information depends on the clarity
of the description of $\fA$.

Thus trying to describe the structure of Lie ideals of an algebra
$B$ and Lie submodules of bimodules over $B$, it is natural to
consider the algebras $\D_{Lie}(\X,B)$, generated by the sets
$\Xi(\X,B)$ for various algebras $B$ and bimodules $\X$. We will see
that in some important cases these algebras can be described
effectively: they coincide with the algebras $\M_{Lie}(\X,B)$ of all
elementary operators $ \sum_{k=1}^n L_{a_k}R_{b_k}$ on $\X$,
satisfying the conditions
\begin{equation}\label{dir}
\sum a_kb_k = 0
\end{equation}
and
\begin{equation}\label{conver}
\sum b_ka_k = 0.
\end{equation}
The algebra of all elementary operators on a bimodule $\X$ is
denoted by $El(\X,B)$. For the most popular case $\X = B$, we write
$El(B)$, $\D_{Lie}(B)$ and $\M_{Lie}(B)$ instead of $El(B,B)$,
$\D_{Lie}(B,B)$ and $\M_{Lie}(B,B))$.

 It is convenient and
interesting to consider the corresponding problems in tensor
algebras. Let $B$ be a unital algebra and $B^{op}$ denote the
opposite algebra to $B$ (that is the same linear space with the
reverse multiplication: $a{\ast}b = ba$). Then each bimodule $\X$
over $B$ can be considered as a module over the tensor product
$B{\otimes}B^{op}$, and the map $a\otimes b \to L_aR_b$ extends to a
surjective homomorphism of $B{\otimes}B^{op}$ onto the algebra of
all elementary operators in $\X$. It sends the elements of the form
$a\otimes 1 - 1\otimes a$  to the inner derivations $\delta_a$. Let
$\T_{Lie}(B)$ be the algebra, generated by all elements of the form
$a\otimes 1 - 1\otimes a$, and $\N_{Lie}(B)$ the algebra of all
tensors $\sum_k a_k\ot b_k$ satisfying (\ref{dir}) and
(\ref{conver}). If one proves that
\begin{equation}\label{mainalg}
\T_{Lie}(B) = \N_{Lie}(B) \end{equation}
then one obtains the equality
\begin{equation}\label{eqbim}
\D_{Lie}(\X,B) = \M_{Lie}(\X,B)
\end{equation}
for all $B$-bimodules $\X$.

We consider also the Banach-algebraic versions of the problems.  If
$B$ is a Banach algebra and $\X$ is a Banach $B$-bimodule then all
elementary operators are bounded and one can consider the norm
closures $\overline{\D_{Lie}(\X,B)}$ and $\overline{\M_{Lie}(\X,B)}$
of the algebras $\D_{Lie}(\X,B)$ and $\M_{Lie}(\X,B)$ in the algebra
$\cB(\X)$ of all bounded operators on $\X$. These algebras can
coincide even if $\D_{Lie}(\X,B) \neq \M_{Lie}(\X,B)$.

Furthermore each Banach $B$-bimodule can be considered as a Banach
module over the projective tensor product $V_B =
B\hat{\otimes}B^{op}$. In $V_B$ one can consider the closures of the
algebras $\T_{Lie}(B)$ and $\N_{Lie}(B)$. It is natural to consider
also the algebra $\fN_{Lie}(B)$ of all elements
$\sum_{k=1}^{\infty}a_k\ot b_k\in V_B$ satisfying (\ref{dir}) and
(\ref{conver}) (with the norm-convergence of series). It is not
difficult to see that
\begin{equation}\label{ord}
\overline{\T_{Lie}(B)} \subset \overline{\N_{Lie}(B)} \subset
\fN_{Lie}(B).
\end{equation}

We don't know examples for which $\overline{\N_{Lie}(B)} \neq
\fN_{Lie}(B)$. If $B$ is commutative then the coincidence of these
algebras follows from the identity $\sum_{k=1}^{\infty}a_k\ot b_k =
\sum_{k=1}^{\infty}(a_k\ot b_k - a_kb_k\ot 1)$.

Note that if $B$ is an algebra of functions on a compact $K$
($B\subset C(K)$) then $V_{B} \subset C(K\times K)$ and $\fN_{Lie}$
consists of all functions $f(x,y)\in V_{B}$ for which
$$f(x,x) = 0.$$

Of course if
\begin{equation}\label{baneq}
\overline{\T_{Lie}(B)} = \overline{\N_{Lie}(B)}
\end{equation}
 then
\begin{equation}\label{banel}
\overline{\D_{Lie}(\X,B)} = \overline{\M_{Lie}(\X,B)}
\end{equation}
for each bimodule $\X$.

In Section 2 we consider the case that $B$ is an algebra. It is shown that the equality (\ref{mainalg}) holds
for algebras with one generator, for semisimple finite-dimensional algebras and for the algebras of finite rank
operators on linear spaces. On the other hand, it does not hold for (the algebra of) polynomials of $n>1$
variables, trigonometrical polynomials, rational functions  and for free algebras with $n\ge 2$ generators. The
corresponding results are obtained for elementary operators.

In Section 3 we discuss the equality (\ref{baneq}) for  commutative Banach algebras. Some results in this case
can be obtained from the purely algebraic results of Section 2 by using the fact that if a commutative Banach
algebra $B$ has a dense subalgebra satisfying (\ref{mainalg}) then $B$ satisfies (\ref{baneq}). But in general
the condition (\ref{baneq}) is more "common"~ than (\ref{mainalg}): the completion (in a natural norm) of an
algebra non-satisfying (\ref{mainalg}) can satisfy (\ref{baneq}). The main positive result of the section is
that (\ref{baneq}) is true when $B = C(K)$, the algebra of all continuous functions on a compact $K$. More
generally, (\ref{baneq}) holds for all commutative regular Banach algebras $B$ such that the diagonal is the set
of spectral synthesis for $B\widehat{\otimes} B$.

Section 4 is devoted to the case that $B = \K(\fX)$, the algebra of all compact operators on a Banach space
$\fX$. It should be noted that in the case of non-commutative Banach algebras the validity of (\ref{baneq}) for
a dense subalgebra does not imply its validity for the whole algebra. Hence one cannot deduce (\ref{baneq}) for
$B = \K(\fX)$ from the validity of (\ref{mainalg}) for $\F(\fX)$.

 We establish (\ref{banel}) for $\fX = H$, the
separable Hilbert space, when elementary operators act on $B$ itself. For general $\fX$, a somewhat more weak
than (\ref{baneq}) equality is proved:
$$\overline{\N_{Lie}(B)^2}
= \overline{\T_{Lie}(B)^2}.$$ It implies a weaker version of
(\ref{banel}):
$$\overline{\D_{Lie}(\X,B)^2}
= \overline{\M_{Lie}(\X,B)^2},$$ where the $B$-bimodule $\X$ is
arbitrary.

In Section 5 we obtain some applications to the structure of closed
Lie ideals in algebras $A = B\widehat{\otimes}F$, where $F$ is a
uniformly hyperfinite C*-algebra, $B$ an arbitrary unital Banach
algebra. The main result is that each closed Lie ideal of $A$ is of
the form $$L = I\hat{\otimes}F_{\tau} + M\hat{\otimes} 1$$ where $I$
is a closed ideal of $B$, $M$ is a closed Lie ideal of $B$.

In Section 6 the obtained results are applied to the study of
invariant subspaces of some Lie algebras of operators on a separable
Hilbert space $H$. In particular we establish a Burnside type
theorem for Lie algebras of operators on $H$ that contain maximal
abelian selfadjoint subalgebras of the algebra $B(H)$ of all bounded
operators on $H$.

The authors are grateful to Matej Bresar and Lyudmila Turowska for helpful discussions and very valuable
information.

\section{Algebraic results}

The following  result shows  that $\N_{Lie}(B)$ and $\M_{Lie}(\X,B)$
are "semiideals" of $B\otimes B^{op}$ and $El(\X,B)$, generated
respectively by $\T_{Lie}(B)$ and $\D_{Lie}(\X,B)$.

\begin{lemma}\label{semiideals}

(i) The intersection of one-sided ideals of $B\otimes B^{op}$,
generated by $\T_{Lie}(B)$, coincides with $\N_{Lie}(B)$.

(ii) The intersection of one-sided ideals of $El(\X,B)$, generated
by $\D_{Lie}(\X,B)$, coincides with $\M_{Lie}(\X,B)$.
\end{lemma}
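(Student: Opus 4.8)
The plan is to prove (i) by writing $\N_{Lie}(B)$ as the intersection of the kernels of two natural ``contraction'' maps, and then to deduce (ii) by pushing the whole picture through the canonical surjection onto $El(\X,B)$. For (i), introduce the linear maps $\pi,\rho\colon B\ot B^{op}\to B$ with $\pi(a\ot b)=ab$ and $\rho(a\ot b)=ba$, so that by definition $\N_{Lie}(B)=\ker\pi\cap\ker\rho$. A one-line computation gives $\pi\big((c\ot d)v\big)=c\,\pi(v)\,d$ and $\rho\big(v(c\ot d)\big)=d\,\rho(v)\,c$, whence $\ker\pi$ is a \emph{left} ideal and $\ker\rho$ a \emph{right} ideal of $B\ot B^{op}$. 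Since $\pi$ and $\rho$ both annihilate $a\ot1-1\ot a$, we get $\T_{Lie}(B)\subseteq\N_{Lie}(B)$, so the generated left ideal $J_L$ lies in $\ker\pi$ and the generated right ideal $J_R$ lies in $\ker\rho$; this already yields $J_L\cap J_R\subseteq\N_{Lie}(B)$. For the reverse inclusion I would use the factorization identities
$$a\ot b-ab\ot1=(a\ot1)(1\ot b-b\ot1)\in J_L,\qquad a\ot b-1\ot ba=(a\ot1-1\ot a)(1\ot b)\in J_R.$$
Summing the first over a representation $v=\sum a_k\ot b_k\in\N_{Lie}(B)$ gives $v-\pi(v)\ot1\in J_L$, and $\pi(v)=0$ forces $v\in J_L$; summing the second gives $v-1\ot\rho(v)\in J_R$, and $\rho(v)=0$ forces $v\in J_R$. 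Hence $\N_{Lie}(B)\subseteq J_L\cap J_R$ and (i) follows.

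For (ii) I would transport everything along the surjective algebra homomorphism $\Phi\colon B\ot B^{op}\to El(\X,B)$, $a\ot b\mapsto L_aR_b$, under which $\Phi(\T_{Lie}(B))=\D_{Lie}(\X,B)$ and $\Phi(\N_{Lie}(B))=\M_{Lie}(\X,B)$. Being a surjective homomorphism, $\Phi$ carries $J_L,J_R$ onto the left and right ideals $\widetilde J_L,\widetilde J_R$ of $El(\X,B)$ generated by $\D_{Lie}(\X,B)$, so applying $\Phi$ to $\N_{Lie}(B)=J_L\cap J_R$ yields at once $\M_{Lie}(\X,B)=\Phi(J_L\cap J_R)\subseteq\widetilde J_L\cap\widetilde J_R$. (Equivalently one checks this directly from the operator forms $L_aR_b-L_{ab}=-L_a\delta_b\in\widetilde J_L$ and $L_aR_b-R_{ba}=\delta_aR_b\in\widetilde J_R$, applied to the defining representation.)

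The main obstacle is the opposite inclusion $\widetilde J_L\cap\widetilde J_R\subseteq\M_{Lie}(\X,B)$, i.e.\ showing that $\Phi$ commutes with the intersection, $\Phi(J_L)\cap\Phi(J_R)\subseteq\Phi(J_L\cap J_R)$. The difficulty is real, because unlike on $B\ot B^{op}$ the maps $\pi,\rho$ do \emph{not} descend to $El(\X,B)$ (there is no ``$x=1$'' to plug in), so the easy half of (i) has no direct analogue; concretely, an operator $T\in\widetilde J_L\cap\widetilde J_R$ comes a priori with two \emph{different} elementary representations, one satisfying (\ref{dir}) and one satisfying (\ref{conver}), and membership in $\M_{Lie}(\X,B)$ requires a \emph{single} representation satisfying both. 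My plan is to lift $T=\Phi(u)$ with $u\in\ker\pi$, put $c=\rho(u)$, and use $L_aR_b=R_{ba}+\delta_aR_b$ to write $T=R_c+X$ with $X\in\widetilde J_R$; since $T\in\widetilde J_R$ this forces $R_c\in\widetilde J_R$, and I would then correct $u$ by a suitable element of $\ker\Phi$ realizing this relation so as to reach an element of $\ker\pi\cap\ker\rho$ with image $T$. The structural input that should make such a correction available is the universal constraint $\pi(v)-\rho(v)=\sum[a_k,b_k]\in[B,B]$, which links the two contractions on $\ker\Phi$; verifying that this repair of (\ref{conver}) does not reintroduce a defect in (\ref{dir}) is the crux of the argument.
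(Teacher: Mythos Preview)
Your treatment of (i) is correct and is exactly the paper's argument: identify the left ideal generated by $\T_{Lie}(B)$ with $\ker\pi$ via the factorization $a\ot b-ab\ot1=(a\ot1)(1\ot b-b\ot1)$ (the paper uses the variant $a\ot b-1\ot ab=(1\ot b)(a\ot1-1\ot a)$), do the same on the right, and intersect.

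For (ii) you have made the problem harder than it is. The paper does \emph{not} transport (i) through $\Phi$; it simply reruns the argument of (i) inside $El(\X,B)$, and the identities you yourself wrote in the parenthetical are all that is needed. From $L_aR_b-R_{ab}=R_b\,\delta_a$ one sees that any left ideal of $El(\X,B)$ containing $\D_{Lie}(\X,B)$ contains every operator possessing a representation $\sum L_{a_i}R_{b_i}$ with $\sum a_ib_i=0$; since the latter set is itself a left ideal containing each $\delta_a$, it \emph{is} the left ideal generated by $\D_{Lie}(\X,B)$. The symmetric identity $L_aR_b-L_{ab}=-L_a\,\delta_b$ handles the right ideal. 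Intersecting gives $\M_{Lie}(\X,B)$, and the proof is over. Your detour through $\Phi$ and the attempted correction by elements of $\ker\Phi$ is unnecessary.

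The subtlety you flag---that an operator in $\widetilde J_L\cap\widetilde J_R$ comes a priori with two different representations, one satisfying (\ref{dir}) and one satisfying (\ref{conver}), rather than a single representation satisfying both---is a real observation, and the paper's terse ``the proof of (ii) is similar'' does not engage with it. Under the reading of $\M_{Lie}(\X,B)$ as ``operators admitting some representation with (\ref{dir}) and some (possibly different) representation with (\ref{conver})'', the direct argument above is complete; under your stricter single-representation reading, neither the paper's proof nor your $\Phi$-route closes the gap. This distinction is immaterial for the paper's applications: the only consequence drawn is (\ref{mulby2}), and a direct expansion of $\delta_a(L_cR_d)\delta_b$ already produces one representation satisfying both conditions simultaneously.
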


\begin{proof}
Let $J$ be a left ideal of $B\otimes B^{op}$, containing
$\T_{Lie}(B)$. Then each element of the form $a\ot b - 1\ot ab$
belongs to $J$, because it equals to $(1\ot b)(a\ot 1-1\ot a)$.
Hence $J$ contains the set $J_1$ of all elements $\sum a_i\ot b_i$
with $\sum a_ib_i = 0$, because each of them can be written in the
form $\sum (a_i\ot b_i - 1\ot a_ib_i)$.  It follows that $J_1$ is
the left ideal of $B\otimes B^{op}$, generated by $\T_{Lie}(B)$.
Similarly the right ideal generated by $\T_{Lie}(B)$ coincides with
$J_2 = \{\sum a_i\ot b_i: \sum b_ia_i = 0\}$. Since $\N_{Lie}(B) =
J_1\cap J_2$, this proves (i). The proof of (ii) is similar.
\end{proof}

\medskip

As a consequence we get the inclusions
\begin{equation}\label{mulby1}
\T_{Lie}(B) (B\ot B^{op}) \T_{Lie}(B) \s \N_{Lie}(B)
\end{equation}
and
\begin{equation}\label{mulby2} \D_{Lie}(\X,B)El(\X,B)\D_{Lie}(\X,B) \s
\M_{Lie}(\X,B).
\end{equation}

 We consider the problem of validity of the equality
(\ref{mainalg}) for arbitrary associative algebra $B$. Let $\frak L$
denote the class of all algebras, for which this equality is true.

\begin{lemma}\label{genprop}
 If $B$ is a commutative unital algebra in $\frak L$, then each
quotient of $B$ belongs to $\frak L$.
\end{lemma}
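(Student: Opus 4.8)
The plan is to transport the equality $\T_{Lie}(B) = \N_{Lie}(B)$ forward along the canonical quotient map. Let $I$ be an ideal of $B$, let $q\colon B \to B/I$ be the quotient homomorphism, and consider the induced algebra homomorphism $q\ot q\colon B\ot B^{op} \to (B/I)\ot (B/I)^{op}$; note $B^{op}=B$ and $(B/I)^{op}=B/I$ since $B$ is commutative, and $q\ot q$ is surjective because $q$ is. The goal is to establish the two transport identities $(q\ot q)(\T_{Lie}(B)) = \T_{Lie}(B/I)$ and $(q\ot q)(\N_{Lie}(B)) = \N_{Lie}(B/I)$, after which the hypothesis $B\in\frak L$ yields the claim immediately.

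The identity for $\T_{Lie}$ is the easy one. On generators, $(q\ot q)(a\ot 1 - 1\ot a) = q(a)\ot 1 - 1\ot q(a)$, and since an algebra homomorphism carries the subalgebra generated by a set onto the subalgebra generated by the image of that set, surjectivity of $q$ gives at once that $(q\ot q)$ maps $\T_{Lie}(B)$ onto the subalgebra generated by all $\bar a\ot 1 - 1\ot\bar a$, $\bar a\in B/I$, which is exactly $\T_{Lie}(B/I)$.

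The identity for $\N_{Lie}$ is the step that requires actual work, and it is where I expect the main (mild) obstacle to lie. Since $B$ is commutative, conditions (\ref{dir}) and (\ref{conver}) coincide, so $\N_{Lie}(B) = \{\sum_k a_k\ot b_k : \sum_k a_kb_k = 0\}$, and similarly for $B/I$. The inclusion $(q\ot q)(\N_{Lie}(B))\s\N_{Lie}(B/I)$ is clear, because $\sum_k q(a_k)q(b_k) = q\bigl(\sum_k a_kb_k\bigr)=0$. For the reverse inclusion I would take any $\sum_k \bar a_k\ot\bar b_k\in\N_{Lie}(B/I)$, choose arbitrary lifts $a_k,b_k\in B$, and put $c = \sum_k a_kb_k$; the constraint $\sum_k\bar a_k\bar b_k = 0$ says precisely that $c\in I$. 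Here unitality enters: the corrected tensor $\sum_k a_k\ot b_k - c\ot 1$ lies in $\N_{Lie}(B)$ (its product-sum is $c - c = 0$), while $(q\ot q)(c\ot 1) = \bar c\ot 1 = 0$, so its image under $q\ot q$ is still $\sum_k\bar a_k\ot\bar b_k$. This gives $(q\ot q)(\N_{Lie}(B)) = \N_{Lie}(B/I)$.

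Finally I would combine the three equalities: using $\T_{Lie}(B)=\N_{Lie}(B)$,
\[
\T_{Lie}(B/I) = (q\ot q)(\T_{Lie}(B)) = (q\ot q)(\N_{Lie}(B)) = \N_{Lie}(B/I),
\]
so $B/I\in\frak L$, as required. The only genuinely nontrivial point is the surjectivity $(q\ot q)(\N_{Lie}(B)) = \N_{Lie}(B/I)$, and it is exactly here that both hypotheses are used: commutativity collapses (\ref{dir}) and (\ref{conver}) into a single constraint, and the presence of a unit supplies the correction term $c\ot 1$. In the noncommutative setting one would have to repair both constraints simultaneously, which a single rank-one correction need not do, so this argument is genuinely tied to the commutative, unital case.
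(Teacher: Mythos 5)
Your proof is correct and follows essentially the same route as the paper's: lift an element of $\N_{Lie}(B/I)$, correct it by $c\ot 1$ with $c=\sum_k a_kb_k\in I$ so that it lands in $\N_{Lie}(B)=\T_{Lie}(B)$, and push forward along $q\ot q$. The only difference is cosmetic — you package the argument as two surjectivity identities rather than the single inclusion $\N_{Lie}(B/I)\subseteq\T_{Lie}(B/I)$ that the paper checks directly — and your explicit remark that commutativity is what makes the single correction term handle both constraints is a point the paper uses silently.
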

\begin{proof}
Let $I$ be an ideal in $B$, $C = B/I$ and $\pi:B\to C$ the canonical
epimorphism.

Let $f = \sum_k u_k\ot v_k \in \N_{Lie}(C)$. For each $k$ choose
$a_k,b_k\in B$ such that $\pi(a_k) = u_k$, $\pi(b_k) = v_k$ and set
$g = \sum a_k\ot b_k$. Since $\sum_k u_kv_k = 0$ we have that
$c:=\sum_k a_kb_k \in I$.

The element $g-c\ot 1$ belongs to $\N_{Lie}(B)$. Hence it belongs to
$\T_{Lie}(B)$. Since $\pi\ot \pi$ clearly sends $\T_{Lie}(B)$ to
$\T_{Lie}(C)$, we get that $f = \pi\ot\pi(g-c\ot 1) \in
\T_{Lie}(C)$. This is what we need.
\end{proof}

\medskip

\begin{theorem}\label{polyn}
Each algebra $B$ with one generator belongs to $\frak L$.
\end{theorem}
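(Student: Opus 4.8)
The plan is to reduce the statement, via Lemma \ref{genprop}, to the single algebra $P=\mathbb{C}[t]$ of polynomials in one variable, and then to settle that case by an explicit computation in a polynomial ring in two variables. A unital algebra generated by one element $t$ is automatically commutative and is a homomorphic image of $P$: writing $I$ for the kernel of the evaluation epimorphism $P\to B$, we have $B\cong P/I$. Since $P$ is commutative and unital, Lemma \ref{genprop} shows that as soon as $P\in\frak L$, every quotient of $P$ — in particular $B$ — lies in $\frak L$. Hence it suffices to prove $P\in\frak L$. I would then translate the problem into $\mathbb{C}[x,y]$: identifying $P\ot P^{op}=P\ot P$ with $\mathbb{C}[x,y]$ via $t\ot 1\mapsto x$ and $1\ot t\mapsto y$, the multiplication map $\sum a_k\ot b_k\mapsto\sum a_kb_k$ becomes restriction to the diagonal, $f(x,y)\mapsto f(t,t)$. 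As $P$ is commutative, conditions (\ref{dir}) and (\ref{conver}) coincide, so $\N_{Lie}(P)$ is exactly the kernel of this homomorphism, i.e. the ideal $(x-y)$ of polynomials vanishing on the diagonal. On the other side $t^n\ot 1-1\ot t^n\mapsto x^n-y^n$, and the linear span of the elements $a\ot 1-1\ot a$ is precisely the span of $\{x^n-y^n:n\ge 1\}$; thus $\T_{Lie}(P)$ is the (non-unital) subalgebra $S\s\mathbb{C}[x,y]$ generated by these elements. The inclusion $S\s(x-y)$ is immediate, since each generator vanishes on the diagonal and the diagonal-vanishing polynomials form an ideal, so everything comes down to the reverse inclusion $(x-y)\s S$ — equivalently, to showing that the subalgebra $S$ is in fact an ideal of $\mathbb{C}[x,y]$.

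To prove $(x-y)\s S$ I would pass to the coordinates $s=x-y$ and $p=x+y$, so that $\mathbb{C}[x,y]=\mathbb{C}[s,p]$ and $(x-y)=s\,\mathbb{C}[s,p]$ has basis $\{s^ip^j:i\ge 1,\ j\ge 0\}$. The key observation is that each generator factors as $x^n-y^n=s\,h_{n-1}$ with $h_{n-1}=\sum_{i+j=n-1}x^iy^j$ symmetric, hence a polynomial in $p$ and $s^2$; evaluating at $s=0$ (that is $x=y=p/2$) gives $h_{n-1}|_{s=0}=\tfrac{n}{2^{n-1}}p^{\,n-1}$, so that
$$x^n-y^n=\frac{n}{2^{n-1}}\,s\,p^{\,n-1}+s^3\,g_n(p,s^2)$$
for some polynomial $g_n$, where every monomial occurring in $s^3g_n(p,s^2)$ has the form $s^mp^i$ with $m\ge 3$ and $i<n-1$. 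I would then establish $s\,p^j\in S$ for every $j\ge 0$ by induction on $j$: the base cases are $s=x-y$ and $s\,p=x^2-y^2$, while for the inductive step the correction terms $s^mp^i$ appearing in $x^{j+1}-y^{j+1}$ have strictly smaller $p$-degree $i<j$ and, being of the form $s^{m-1}\cdot(s\,p^i)$ with $s^{m-1}=(x-y)^{m-1}\in S$, lie in $S$ by the inductive hypothesis; since the leading coefficient $\tfrac{j+1}{2^{j}}$ is nonzero, this forces $s\,p^j\in S$. Finally $s^ip^j=(x-y)^{i-1}(s\,p^j)\in S$ for all $i\ge 1$, so $S$ contains a basis of $(x-y)$ and the two coincide. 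This yields $\T_{Lie}(P)=\N_{Lie}(P)$, i.e. $P\in\frak L$, and the theorem follows.

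The main obstacle is exactly this reverse inclusion: recovering the whole ideal $(x-y)$ from the rather thin subalgebra generated by the differences $x^n-y^n$, for which the explicit leading term above seems to be the cleanest handle. The nonvanishing of the coefficients $n/2^{n-1}$ is where working over a field of characteristic zero (in particular over $\mathbb{C}$) enters in an essential way — indeed in characteristic two the degree-two part of $(x-y)$ already has dimension two while the corresponding $S$ reaches only dimension one, so the statement genuinely fails there.
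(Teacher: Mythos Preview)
Your argument is correct. The reduction to $P=\mathbb{C}[t]$ via Lemma \ref{genprop} and the identification of $\N_{Lie}(P)$ with the ideal $(x-y)\subset\mathbb{C}[x,y]$ are exactly what the paper does. The difference is in how the reverse inclusion $(x-y)\subseteq\T_{Lie}(P)$ is obtained.

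The paper works with the grading by total degree: writing $J_n$ for the homogeneous degree-$n$ part of $(x-y)$, it shows $J_n\subset\T_{Lie}(P)$ by induction on $n$. The inductive step takes an arbitrary $e_i\in J_k$, subtracts a suitable multiple $\lambda_i$ of $e=x^k-y^k$ so that $(x-y)^2\mid e_i-\lambda_i e$, and then writes $e_i-\lambda_i e=(x-y)\cdot u$ with $u\in J_{k-1}\subset\T_{Lie}(P)$. The scalar $\lambda_i$ is found by the ``sum of coefficients'' functional, whose nonvanishing on $e/(x-y)$ (the value is $k$) is the characteristic-zero input.

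You instead change coordinates to $s=x-y$, $p=x+y$, and induct on the $p$-degree: the expansion $x^{j+1}-y^{j+1}=\tfrac{j+1}{2^{j}}\,sp^{\,j}+s^3g_{j+1}(p,s^2)$ isolates $sp^{\,j}$ with a nonzero leading coefficient, and the correction terms have strictly smaller $p$-degree so fall to the inductive hypothesis. Once all $sp^{\,j}$ are in $S$, multiplication by powers of $s$ finishes. The two arguments are close cousins --- in both, the crucial point is that $x^k-y^k$ contributes a nonzero term modulo $(x-y)^2$ --- but your coordinate change packages the induction more explicitly and makes the role of characteristic zero visible in the concrete coefficient $\tfrac{n}{2^{n-1}}$, whereas the paper's organization handles all of $J_k$ at once rather than one basis vector at a time. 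Your final remark on characteristic two is a correct and welcome observation that the paper does not make.
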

\begin{proof}
Since each algebra with one generator is a quotient of the algebra
of polynomials in one variable it follows from Lemma \ref{genprop}
that we may restrict to the case when $B$ is the algebra of
polynomials in one variable. Clearly $B{\otimes}B$ can be identified
with the algebra $\cP_2$ of polynomials in two variables. So in this
presentation ${\T_{Lie}(B)}$ is the subalgebra in $\cP_2$, generated
by all polynomials of the form $p(x)-p(y)$. It can be easily checked
that ${\N_{Lie}(B)}$ coincides with the algebra $J$ of all
polynomials $p(x,y)$ satisfying the equality $p(x,x) = 0$. It is not
difficult to see that $J$ is the ideal of $\cP_2$ generated by the
polynomial  $x-y$. Let $J_n$ be the set of all uniform polynomials
of degree $n$ in $J$; since $x-y$ is uniform, $J = \sum_n J_n$. It
is clear that a polynomial $p(x,y) = \sum_{i=0}^na_ix^iy^{n-i}$
belongs to $J_n$ if and only if  $\sum_ia_i = 0$.

We will show by induction that $J_n\subset \T_{Lie}(B)$. For $n = 1$
this is evident.

Suppose this is proved for $n< k$. Let $e_1,...,e_k$ be a basis in
$J_k$, and let $e(x,y) = x^k-y^k$. Then for each $i$ there is
$\lambda = \lambda_i$ such that $(x-y)^2$ divides $e_i - \lambda_i
e$. Indeed setting $u_i(x,y) = e_i(x,y)/(x-y)$, $u(x,y) =
e(x,y)/(x-y)$ we see that $u\notin J$, or in other words $s(u) \neq
0$, where $s(u)$ is the sum of coefficients of $u$. So it suffices
to take $\lambda_i = s(u_i)/s(u)$.

It follows that the functions $u_i - \lambda_i u$ belong to
$J_{k-1}$. By induction hypothesis, they belong to $\T_{Lie}(B)$.
Hence $e_i-\lambda_i e\in \T_{Lie}(B)$. Since also $e\in
\T_{Lie}(B)$, we get that all $e_i$ are in $\T_{Lie}(B)$. Thus
$J_k\subset \T_{Lie}(B)$.
\end{proof}

\medskip

Dealing with Lie ideals of associative algebras it is natural to
put the question: which algebraic expressions in elements
$x_1,...,x_n$ of an algebra one can write, being guaranteed that
their results are in the given Lie ideal, containing all $x_i$? A
more correct formulation: which elementary operators preserve Lie
ideals? Using Theorem \ref{polyn}, we obtain the answer in a
simplest case.

\begin{corollary}\label{simplest}
Let a matrix $(\lambda_{k,m})_{k,m\in \mathbb{N}}$ be given with
only finite number of non-zero entries. Suppose that
 \begin{equation}\label{conv}
 \sum_{k+m =
 n}\lambda_{k,m} = 0\;{\rm  \;\; for\; all\;}n.
 \end{equation}
If $\cL$ is a Lie ideal of
 an associative algebra $\B$ then  $\sum_{k,m}\lambda_{k,m}a^kba^m \in \cL$,
 for all $a\in \B$, $b\in \cL$.
 \end{corollary}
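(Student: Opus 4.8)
The plan is to read the given operator $b\mapsto\sum_{k,m}\lambda_{k,m}a^kba^m$ as the image of a single tensor under the elementary-operator homomorphism, and then to apply Theorem \ref{polyn} to the subalgebra generated by $a$. After replacing $\B$ by its unitization if necessary (which does not change the problem, since $[\alpha 1+c,b]=[c,b]$ shows $\cL$ remains a Lie ideal and $a$ still lies in the original algebra), let $B_0$ be the unital subalgebra of $\B$ generated by $a$. As $B_0$ is singly generated it is commutative and, by Theorem \ref{polyn}, it belongs to $\frak L$, so that $\T_{Lie}(B_0)=\N_{Lie}(B_0)$. Regarding $\B$ as a $B_0$-bimodule and letting $\theta\colon B_0\ot B_0^{op}\to El(\B,B_0)$ be the homomorphism $u\ot v\mapsto L_uR_v$, the operator in question is exactly $\theta(t)$ applied to $b$, where $t=\sum_{k,m}\lambda_{k,m}a^k\ot a^m$.

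First I would check that $t\in\N_{Lie}(B_0)$. Grouping the finitely many nonzero terms by total degree and using the hypothesis (\ref{conv}) gives
\[
\sum_{k,m}\lambda_{k,m}a^ka^m=\sum_n\Bigl(\sum_{k+m=n}\lambda_{k,m}\Bigr)a^n=0,
\]
and the reversed products $a^ma^k$ yield the same sum, so both (\ref{dir}) and (\ref{conver}) hold. Hence $t\in\N_{Lie}(B_0)=\T_{Lie}(B_0)$.

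It then remains to transport this back to $\cL$. Since $\theta$ is a homomorphism sending each generator $c\ot 1-1\ot c$ of $\T_{Lie}(B_0)$ to the inner derivation $\delta_c=L_c-R_c$, the operator $\theta(t)$ is a finite sum of products of such $\delta_c$ with $c\in B_0$. Now $\cL$ is invariant under every $\delta_c$, $c\in B_0$: writing $c=\alpha 1+c'$ with $c'\in\B$ we have $\delta_c=\delta_{c'}$, and $\delta_{c'}(\cL)\s\cL$ because $\cL$ is a Lie ideal of $\B$. Consequently $\cL$ is invariant under any product of such derivations, hence under $\theta(t)$, and therefore $\theta(t)(b)=\sum_{k,m}\lambda_{k,m}a^kba^m\in\cL$ for every $b\in\cL$. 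The substantive input is entirely Theorem \ref{polyn}; the only point requiring a little care is the unit, which is handled by the observation $\delta_1=0$, ensuring that the derivations indexed by $B_0$ act through elements of $\B$ and so preserve $\cL$.
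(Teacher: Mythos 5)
Your proof is correct and follows essentially the same route as the paper: both arguments reduce the claim to Theorem \ref{polyn} for a singly generated commutative algebra and then observe that the resulting expression is a sum of products of inner derivations $\delta_{p(a)}$, each of which preserves $\cL$. The paper phrases this with the two-variable polynomial $P(\alpha,\beta)=\sum\lambda_{k,m}\alpha^k\beta^m$ and the substitution $\alpha\mapsto L_a$, $\beta\mapsto R_a$, whereas you work with the unital subalgebra $B_0$ generated by $a$ and the tensor $t=\sum\lambda_{k,m}a^k\ot a^m$; this is only a cosmetic difference, and your explicit handling of the unit via $\delta_1=0$ matches the (implicit) treatment in the paper.
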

 \begin{proof}
Clearly $\sum_{k,m}\lambda_{k,m}a^kba^m = P(L_a,R_a)b$ where
$P(\alpha,\beta) = \sum_{k,m}\lambda_{k,m}\alpha^k\beta^m$. The
condition (\ref{conv}) provides that $P(\alpha,\alpha) = 0$. It
follows from Theorem \ref{polyn} that $P$ belongs to the subalgebra
 generated by all polynomials $p(\alpha) - p(\beta)$. Since $L$ is
invariant under $p(L_a) - p(R_a) = L_{p(a)} - R_{p(a)}$, it is
invariant under $P(L_a,R_a)$.
\end{proof}

\medskip

 It can also be proved that the condition (\ref{conv}) is necessary in
 some sense. Namely, if $\lambda_{k,m}$ don't satisfy it then there
 are an algebra $\A$, its  Lie ideal $\cL$ and elements $a,b\in \cL$ such
 that
 $\sum_{k,m}\lambda_{k,m}a^kba^m \notin \cL$.

Indeed, setting $\mu_n = \sum_{k+m = n}\lambda_{k,m}$ we have from
 the above that $\sum_{k,m}\lambda_{k,m}a^kba^m - \sum_n \mu_na^nb
 \in \cL$. So it suffices to construct $\cL$ in such a way that $\sum_n \mu_na^nb
 \notin \cL$. It is easy to show that if $p$ is a polynomial of
 degree $\ge 1$ and $\A$ is commutative algebra then there is a Lie ideal
  $\cL$ of  $\A$ (any subspace is a Lie ideal) and
 elements $a,b\in \cL$ with $p(a)b\notin \cL$ (take $b = a$, $\cL = {\mathbb C}a
 $).

\medskip

To demonstrate possible applications of the results of such kind let
us consider one of the simplest examples: the algebra $M_n$ of all
$n\times n$ matrices as a bimodule over the algebra $D_n$ of all
diagonal matrices, with respect to the matrix multiplications.

For a subset $K$ of $\{1,2,...,n\}\times\{1,2,...,n\}$, denote by
$Z(K)$ the space of all matrices $a\in M_n$ with $a_{jk} = 0$ for
$(j,k)\notin K$.

\begin{corollary}\label{subbimmat}
Each  Lie $D_n$-submodule of $M_n$ is a direct sum $S = G + Z(K)$
where $G$ is a subspace of $D_n$  and $K$ is a subset of
$\{1,2,...,n\}\times\{1,2,...,n\}$, non-intersecting the diagonal.
\end{corollary}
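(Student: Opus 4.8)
The plan is to describe the algebra $\D_{Lie}(M_n,D_n)$ generated by the inner derivations $\delta_d$, $d\in D_n$, and then read off its invariant subspaces, which are exactly the Lie $D_n$-submodules of $M_n$ (as observed in the introduction). The starting point is that $D_n$ is generated by the single element $d_0=\mathrm{diag}(1,2,\dots,n)$: since the diagonal entries of $d_0$ are distinct, the powers $1,d_0,\dots,d_0^{n-1}$ span $D_n$, so every $d\in D_n$ has the form $p(d_0)$ for a polynomial $p$. Writing $L=L_{d_0}$ and $R=R_{d_0}$, which commute, we then have $\delta_{p(d_0)}=p(L)-p(R)$, so $\D_{Lie}(M_n,D_n)$ is the algebra generated by the operators $p(L)-p(R)$.

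At this point I would invoke Theorem \ref{polyn}. The homomorphism $\mathbb{C}[x,y]\to\cB(M_n)$ sending $x\mapsto L$, $y\mapsto R$ carries the subalgebra $\T_{Lie}(\mathbb{C}[x])$ generated by the $p(x)-p(y)$ onto $\D_{Lie}(M_n,D_n)$; by the theorem this subalgebra equals $\N_{Lie}(\mathbb{C}[x])=\{P\in\mathbb{C}[x,y]:P(x,x)=0\}$. Hence
$$\D_{Lie}(M_n,D_n)=\{P(L,R):P\in\mathbb{C}[x,y],\ P(x,x)=0\}.$$
Since $L e_{jk}=j\,e_{jk}$ and $R e_{jk}=k\,e_{jk}$, each such operator acts diagonally in the matrix-unit basis by $P(L,R)e_{jk}=P(j,k)e_{jk}$, and the identity $P(x,x)=0$ forces $P(i,i)=0$, so these operators annihilate the diagonal units $e_{ii}$ and hence annihilate $D_n$.

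The next step is to extract coordinate projections. For a fixed off-diagonal pair $(j,k)$ I would choose, by two-dimensional Lagrange interpolation over the finite grid $\{1,\dots,n\}^2$, a polynomial $Q$ with $Q(j,k)=1/(j-k)$ and $Q=0$ at every other off-diagonal grid point, and put $P=(x-y)Q$; then $P(x,x)=0$ identically, so $P(L,R)\in\D_{Lie}(M_n,D_n)$, and $P(L,R)$ is precisely the projection $P_{jk}$ onto $\mathbb{C}e_{jk}$. Summing these over all off-diagonal pairs yields the projection $P_{\mathrm{off}}$ onto $Z(K_{\mathrm{off}})$, where $K_{\mathrm{off}}=\{(a,b):a\neq b\}$, which again lies in $\D_{Lie}(M_n,D_n)$.

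Finally I would combine these facts. Let $S$ be a Lie submodule, so $S$ is invariant under $\D_{Lie}(M_n,D_n)$. Invariance under the idempotent $P_{\mathrm{off}}$ gives the splitting $S=(S\cap D_n)\oplus(S\cap Z(K_{\mathrm{off}}))$, and invariance under each $P_{jk}$ shows $S\cap Z(K_{\mathrm{off}})=Z(K)$ with $K=\{(j,k):j\neq k,\ P_{jk}S\neq 0\}$. Setting $G=S\cap D_n$ gives $S=G+Z(K)$ with $K$ disjoint from the diagonal, as required. The one genuinely substantive step is the identification of $\D_{Lie}(M_n,D_n)$ via Theorem \ref{polyn}; everything afterward is interpolation, and the only point to watch is that $P(x,x)=0$ must hold as a polynomial identity (guaranteed by the factor $x-y$) rather than merely at the $n$ diagonal grid points, so that the theorem genuinely applies.
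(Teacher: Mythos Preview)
Your proof is correct and follows essentially the same route as the paper: both use that $D_n$ is singly generated to invoke Theorem~\ref{polyn}, identify the algebra $\D_{Lie}(M_n,D_n)$ with operators acting diagonally (by Hadamard/Schur multiplication) in the matrix-unit basis and trivially on $D_n$, and then read off the invariant subspaces. The only cosmetic difference is that the paper phrases the identification as $\T_{Lie}(D_n)=\N_{Lie}(D_n)\cong\mathbb{C}^{n(n-1)}$ acting by Hadamard multiplication and invokes the obvious structure of invariant subspaces for that action, whereas you carry out the same step concretely via $P(L,R)$ and an explicit Lagrange interpolation to build the coordinate projections $P_{jk}$.
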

\begin{proof}
Lie submodules are invariant subspaces for the algebra
$\T_{Lie}(D_n)$. Clearly $D_n$ is generated by one element, hence,
by Theorem \ref{polyn},  $\T_{Lie}(D_n)$ coincides with
$\N_{Lie}(D_n)$ which can be realized as the algebra of all matrices
with zero's on the diagonal (and pointwise multiplication). In this
realization the action of $\N_{Lie}(D_n)$ on $M_n$ is also the
pointwise (Hadamard) multiplication. Then in fact we have the action
of the algebra $\mathbb{C}^{n(n-1)}$ on the direct sum $D\oplus
\mathbb{C}^{n(n-1)}$, and the action on the first summand is
trivial. It follows that the invariant subspaces are direct sums of
subspaces of $D$ and "coordinate" subspaces of
$\mathbb{C}^{n(n-1)}$. This proves our assertion.
\end{proof}

\medskip

As a consequence we get a well known  (see for example \cite{Hers}
where a similar result was obtained for matrices over arbitrary
ring) description of Lie ideals in $M_n$:

\begin{corollary}\label{standard}
The only Lie ideals in $M_n$ are $0$, $M_n$, $\mathbb{C}1$ and the
space $M_n^0$ of all matrices with zero trace.
\end{corollary}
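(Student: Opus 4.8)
The plan is to upgrade the description of $D_n$-submodules in Corollary~\ref{subbimmat} to the full Lie-ideal condition by feeding in the extra invariance that comes from commuting with \emph{off-diagonal} matrix units, not just diagonal ones. Since $D_n \subset M_n$, any Lie ideal $\cL$ of $M_n$ is in particular a Lie $D_n$-submodule, so Corollary~\ref{subbimmat} already gives a decomposition $\cL = G + Z(K)$ with $G$ a subspace of the diagonal $D_n$ and $K$ a set of off-diagonal positions. The argument then splits into two cases according to whether $K$ is empty, and in each case I would use the relations $[E_{ij},x]\in\cL$ for the standard matrix units $E_{ij}$.

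First I would dispose of the case $K=\emptyset$, so that $\cL\subseteq D_n$. For a typical diagonal element $d=\sum_k d_k E_{kk}\in\cL$ and $i\neq j$ one computes $[E_{ij},d]=(d_j-d_i)E_{ij}$, an off-diagonal matrix; since it must again lie in $\cL\subseteq D_n$, this forces $d_j-d_i=0$ for all $i,j$. Hence every element of $\cL$ is a scalar multiple of the identity, giving $\cL=0$ or $\cL=\mathbb{C}1$.

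In the remaining case $K\neq\emptyset$ some off-diagonal unit $E_{ij}$ (with $i\neq j$) belongs to $\cL$, and the core of the proof is a generation argument showing $\cL\supseteq M_n^0$. Using $[E_{pi},E_{ij}]=E_{pj}$ for $p\neq j$ and $[E_{ij},E_{jq}]=E_{iq}$ for $q\neq i$, I propagate along the column $j$ and the row $i$; then $[E_{ji},E_{iq}]=E_{jq}$ and $[E_{pj},E_{jq}]=E_{pq}$ reach the remaining row $j$ and column $i$, so that every off-diagonal unit $E_{pq}$ ($p\neq q$) lies in $\cL$. Finally $[E_{kl},E_{lk}]=E_{kk}-E_{ll}$ produces all traceless diagonal matrices, and together these span exactly $M_n^0$. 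Since $M_n^0$ has codimension one in $M_n$ and $\mathrm{tr}(1)=n\neq 0$, either $\cL=M_n^0$, or $\cL$ contains a matrix of nonzero trace and then $\cL=M_n^0+\mathbb{C}1=M_n$.

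I expect the only delicate point to be the bookkeeping in the generation step: one must check that a \emph{single} off-diagonal unit indeed reaches every off-diagonal unit, the subtlety being that the entries in row $j$ and column $i$ are obtained only after one extra bracket (e.g.\ $[E_{ji},E_{iq}]=E_{jq}$). Everything else reduces to Corollary~\ref{subbimmat} together with the codimension-one trace argument, so once the generation is verified the four possibilities $0,\ \mathbb{C}1,\ M_n^0,\ M_n$ follow immediately.
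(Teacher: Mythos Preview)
Your argument is correct and follows the paper's route: both use Corollary~\ref{subbimmat} to write $S=G+Z(K)$, handle $K=\emptyset$ via $[e_{ij},d]=(d_j-d_i)e_{ij}$, and in the case $K\neq\emptyset$ use matrix-unit brackets (the paper phrases this as ``the multiplication table shows $K$ must be empty or full'' rather than generating all off-diagonal units from a single $E_{ij}$, but the underlying computation is identical). One small bookkeeping point you should patch: the brackets you list miss the single entry $E_{ji}$, since $[E_{ji},E_{iq}]=E_{jq}$ needs $q\neq i$ and $[E_{pj},E_{ji}]=E_{pi}$ needs $p\neq j$; for $n\ge 3$ one further bracket $[E_{jk},E_{ki}]=E_{ji}$ with $k\neq i,j$ finishes the job, while for $n=2$ one first obtains $E_{11}-E_{22}=[E_{12},E_{21}]\in\cL$ and then $2E_{21}=[E_{21},E_{11}-E_{22}]$.
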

\begin{proof}
Any Lie ideal $S$ of $M_n$ is a Lie submodule over $D_n$. Hence it
has the form  $S = G + Z(K)$, where $G\s D_n$. Hence
$$[a,Z(K)] \s Z(K) + D_n , \text{  for all }a\in M_n.$$
The multiplication table of matrix units shows that this condition
holds if either $K = \emptyset$ or $K = \{(j,k): j\neq k\}$. In the
first case $S = G\s D_n$. If $a_{ii}\neq a_{jj}$ for some $a\in S$,
then $[e_{ij},a]\notin D_n$, where $e_{ij}$ is the corresponding
matrix unit. Hence $S$ consists of matrices $\lambda 1$, so either
$S = 0$ or $S = \mathbb{C}1$.

If  $K = \{(j,k): j\neq k\}$ then $e_{ii}-e_{jj} = [e_{ij},e_{ji}]
\in S$, for all $(i,j)$ with $i\neq j$. Hence $G$ contains the
linear span of all $e_{ii}-e_{jj}$ which coincides with the space of
all diagonal matrices of zero trace. It follows that either $G =
D_n$ or $G = D_n \cap M_n^0$. So $S = M_n$ or $S = M_n^0$.
\end{proof}

\m

Now let us show that (\ref{mainalg}) does not hold for all (even for
all commutative) algebras.

\begin{theorem}\label{counterex}
The equality (\ref{mainalg}) is not true when

(i) $B = \cP_2$, the algebra of polynomials in two variables $(x_1,x_2)$,

(ii) $B = \cL$, the algebra of Loran polynomials,

(iii) $B = \R_1$, the algebra of rational functions of one variable.
\end{theorem}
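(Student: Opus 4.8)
The plan is to prove all three parts by one mechanism: a first-order obstruction along the diagonal that $\N_{Lie}(B)$ sees but $\T_{Lie}(B)$ cannot. Since each $B$ here is commutative, $B^{op}=B$, condition (\ref{conver}) coincides with (\ref{dir}), and $\N_{Lie}(B)$ is precisely the kernel $I$ of the multiplication map $\mu\colon B\ot B\to B$, $\mu(\sum_k a_k\ot b_k)=\sum_k a_kb_k$. Note that $\T_{Lie}(B)\s I$ already, since every generator $a\ot 1-1\ot a$ lies in $I$.

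First I would pass to the quotient $I/I^2$. The key linear fact is the standard isomorphism $I/I^2\cong\Omega_{B/\mathbb C}$ (the module of differentials), under which the class of $\sum_k a_k\ot b_k\in I$ is sent to $\sum_k a_k\,db_k$ and $a\ot 1-1\ot a\mapsto -da$, where $d$ is the universal derivation. Now every element of $\T_{Lie}(B)$ is a $\mathbb C$-combination of products of generators, and a product of two or more generators lies in $I\cdot I=I^2$, so modulo $I^2$ only the single-generator terms survive. Hence the image of $\T_{Lie}(B)$ in $I/I^2$ is exactly the space of exact differentials $dB=\{da:a\in B\}$, whereas the image of $\N_{Lie}(B)=I$ is all of $I/I^2\cong\Omega_{B/\mathbb C}$. (This criterion is sharp: for $B=\mathbb C[x]$ every $p(x)\,dx$ is exact, which is the content of Theorem \ref{polyn}.) Consequently, to refute (\ref{mainalg}) it suffices to exhibit, in each case, an element of $\N_{Lie}(B)$ whose image in $I/I^2$ is a non-exact differential.

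Then I would write down the witnesses. For (i) take $f=x_2\ot x_1-x_1x_2\ot 1\in\N_{Lie}(\cP_2)$; its image is $x_2\,dx_1$, which is not exact, since $da=(\partial_1 a)\,dx_1+(\partial_2 a)\,dx_2=x_2\,dx_1$ would force $\partial_2 a=0$ and $\partial_1 a=x_2$, an impossibility. For (ii) and (iii) the same element $f=z^{-1}\ot z-1\ot 1$ works in both $\cL=\mathbb C[z,z^{-1}]$ and $\R_1=\mathbb C(z)$: one has $\mu(f)=0$ and the image of $f$ is $z^{-1}\,dz$, which is not exact because an antiderivative of $z^{-1}$ is $\log z$, lying in neither algebra (equivalently, $z^{-1}\,dz$ has a nonzero residue while every $p'(z)\,dz$ has vanishing residues). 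In each case $f\in\N_{Lie}(B)\setminus\T_{Lie}(B)$, so (\ref{mainalg}) fails.

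The step that needs the most care is the identification of the image of $\T_{Lie}(B)$ in $I/I^2$ with $dB$: this rests on $I\cdot I\subseteq I^2$, which forces the induced product on $I/I^2$ to vanish, so that a product of generators of length $\ge 2$ contributes nothing and only the linear part of an element of $\T_{Lie}(B)$ is detected. A minor point for (iii) is that $\mathbb C(z)\ot\mathbb C(z)$ is a proper subring of $\mathbb C(z,w)$, but this is irrelevant: the passage to $I/I^2$ and the isomorphism $I/I^2\cong\Omega_{\mathbb C(z)/\mathbb C}$ (a one-dimensional $\mathbb C(z)$-space on which non-exactness is read off from residues) are intrinsic to $B\ot B$, so the argument applies verbatim.
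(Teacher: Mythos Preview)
Your proof is correct and, at the level of witnesses, agrees with the paper: for (i) the paper's element $(x_1-y_1)x_2$ is (up to sign) your $x_2\ot x_1-x_1x_2\ot 1$, and for (ii)/(iii) the paper's $\frac{x-y}{x}$ is exactly $1\ot 1-z^{-1}\ot z$, the negative of your $f$. The arguments differ in packaging. For (ii)/(iii) the paper's Lemma divides by $x-y$ and takes the limit $y\to x$, observing that products of $\ge 2$ factors $f_k(x)-f_k(y)$ vanish in this limit while a single factor gives $f'(x)$; this is precisely your projection $I\to I/I^2\cong\Omega_{B/\mathbb C}$, carried out analytically rather than via the K\"ahler formalism, and the conclusion $1/x=f'(x)$ is your non-exactness of $z^{-1}\,dz$. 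For (i), however, the paper does \emph{not} use this first-order mechanism: it extracts the degree-$2$ homogeneous component and solves a linear system in six unknowns to reach a contradiction. Your approach via $I/I^2$ is genuinely more uniform---one criterion covers all three algebras---and it explains transparently why the one-generator case (Theorem~\ref{polyn}) succeeds: in $\mathbb C[x]$ every differential is exact. What the paper's hands-on computation for (i) buys is the stronger statement in Remark~\ref{more} (that $(x_1-y_1)^2x_2\notin\T_{Lie}(\cP_2)$), which your $I/I^2$ obstruction cannot see since that element already lies in $I^2$; this stronger fact is needed later for Corollary~\ref{free}.
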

\begin{proof}
(i) Clearly we can identify $B{\otimes}B$ with the algebra $\cP_4$ of polynomials in four variables $(\vec
x,\vec y) = (x_1,x_2,y_1,y_2)$ by the equality $(p\otimes q)(\vec x,\vec y)= p(\vec x)q(\vec y)$.    The
polynomial $p(\vec x, \vec y) = (x_1-y_1)x_2$ belongs to ${\N_{Lie}(B)}$ because $p(x_1,x_2,x_1,x_2) = 0$. We'll
prove that it does not belong to ${\T_{Lie}(B)}$.

Assume the contrary, then
\begin{equation}\label{temp1}
p(\vec x,\vec y) = \sum_i q_i(\vec x,\vec y) \end{equation}
 where each $q_i$ is the product of polynomials of the form $a(\vec x)-a(\vec y)$. It
is evident that one can assume that all $a(\vec x)$ are monomials:
$a(\vec x) = x_1^kx_2^m$. Hence each $q_i$ is a uniform polynomial.
Taking only the uniform polynomials of degree 2 in the right hand
side of (\ref{temp1}), we see that
\begin{multline}\label{long}
p(\vec x,\vec y) = \lambda_1(x_1-y_1)^2 +
\lambda_2(x_1-y_1)(x_2-y_2) +\\ \lambda_3(x_2-y_2)^2 +
\lambda_4(x_1^2-y_1^2)+ \lambda_5(x_1x_2-y_1y_2)
 + \lambda_6(x_2^2-y_2^2) .
\end{multline}
Setting $x_1 = y_1$ we get $$ 0 = \lambda_3(x_2-y_2)^2 +
\lambda_5x_1(x_2-y_2)
 + \lambda_6(x_2^2-y_2^2).$$
 Hence $$ 0 = \lambda_3(x_2-y_2) + \lambda_5x_1
 + \lambda_6(x_2+y_2) ,$$
 which easily implies  that
 $\lambda_3 = \lambda_5 = \lambda_6 = 0.$
 So (\ref{long}) gives:
 $$x_2(x_1-y_1) = \lambda_1(x_1-y_1)^2 + \lambda_2(x_1-y_1)(x_2-y_2)
 + \lambda_4(x_1^2-y_1^2).$$ It follows that $$x_2 =  \lambda_1(x_1-y_1) + \lambda_2(x_2-y_2)
 + \lambda_4(x_1+y_1).$$ Setting $x_1 = y_1$, $x_2 = y_2$ we get $x_2
 = 0$, a contradiction.

 To prove (ii) and (iii) we need the following auxiliary statement.

 \begin{lemma}\label{vspom}
 Let  $\R_2$ be  the algebra of all rational functions in two variables, and $\E$ --- the subalgebra of $\R_2$,
 generated by all functions $f(x) - f(y)$, $f\in \R_1$. Then the function $g(x,y) = \frac{x-y}{x}$ does not belong to $\E$.
 \end{lemma}
 \begin{proof}
 Assuming the contrary we have the equality
 \begin{equation}\label{twosum}
 g(x,y) = f(x) - f(y) + h(x,y),
 \end{equation}
 here $h(x,y)$ belongs to the linear span $U$ of all elements of $\R_2$ of the form
 \begin{equation}\label{U}
u(x,y) = \prod_{k=1}^n
 (f_k(x)-f_k(y))\text{ with }n\ge 2, f_i\in \R_1.
 \end{equation}
 We divide both parts of (\ref{twosum}) by $x-y$ and, fixing $x$ in the domains of definition of all functions $f_k$
 in all products of the form (\ref{U}) that participate in $h(x,y)$,
  take the limit for  $y\to x$.

  Note that for $u(x,y)$ of the form (\ref{U}), one has $\lim_{y\to x}\frac{u(x,y)}{x-y} = 0$. Hence we obtain the equality
  $$\frac{1}{x} = f^{\prime}(x)$$
  which is impossible because $f$  is rational.
  \end{proof}
  To finish the proof of parts (ii) and (iii) of Theorem \ref{counterex}, note that for $B = \R_1$ and $B = \cL$,
  the function
  $g(x,y)= \frac{x-y}{x}$ belongs to the ideal of $B$ generated by all functions of the form $f(x)-f(y)$. But
  Lemma \ref{vspom} shows that it does not belong to the subalgebra generated by these functions.
  \end{proof}

  Note that the algebra of Loran polynomials is isomorphic to the algebra of trigonometrical polynomials. Thus the
equality (\ref{mainalg}) does not hold for the latter.

 \medskip

 \begin{remark}\label{more}
 In the proof of Theorem \ref{counterex}(i) we established that the polynomial
 $(x_1-y_1)x_2$  does not belong to $\T_{Lie}(\cP_2)$. In what
 follows we will need a more strong result: the polynomial
 $(x_1-y_1)^2x_2$ also does not belong to $\T_{Lie}(\cP_2)$.
 \end{remark}
 \begin{proof}
 Suppose that $p(x_1, x_2, y_1, y_2)=(x_1-y_1)^2x_2$ is in $\T_{Lie}(\cP_2)$ and
hence can be represented in the form $\sum_i q_i(x_1, x_2, y_1,
y_2)$ where every $q_i$ is a product of polynomials of the form
$a(x_1, x_2)-a(y_1, y_2)$. We may assume that every $a$ is a
monomial. Since $p(x_1, x_2, x_1, y_2)=0$, it is not hard to see
that the equality must have the form
$$ p(x_1, x_2, y_1, y_2)=
\lambda_1(x_1^3-y_1^3)+\lambda_2(x_1-y_1)(x_1x_2-y_1y_2)+\lambda_3(x_1^2-y_1^2)(x_2-y_2)$$
$$+\lambda_4(x_1-y_1)(x_2^2-y_2^2)+\lambda_5(x_1-y_1)^2(x_2-y_2)+\lambda_6(x_1-y_1)^3+\lambda_7(x_1-y_1)(x_2-y_2)^2.$$
Dividing the both sides by $x_1-y_1$ we get
\begin{multline}\label{temp2}
(x_1-y_1)x_2 =
\lambda_1(x_1^2+x_1y_1+y_1^2)+\lambda_2(x_1x_2-y_1y_2)+\lambda_3(x_1-y_1)(x_2-y_2)\\
+\lambda_4(x_1-y_1)(x_2+y_2)+\lambda_5(x_1+y_1)(x_2-y_2)+\lambda_6(x_1-y_1)^2+\lambda_7(x_2-y_2)^2.
\end{multline}
For $y_1=x_1$ we obtain
$$3\lambda_1x_1^2+\lambda_2x_1x_2-\lambda_2x_1y_2+2\lambda_5x_1x_2-2\lambda_5x_1y_2+\lambda_7x_2^2-2\lambda_7x_2y_2+
\lambda_7y_2^2=0$$ whence $\lambda_1=\lambda_7=0, \lambda_2=-2\lambda_5$ and \begin{multline*} (x_1-y_1)x_2 =
-2\lambda_5(x_1x_2-y_1y_2)+\lambda_3(x_1-y_1)(x_2-y_2)+\\
\lambda_4(x_1-y_1)(x_2+y_2)+\lambda_5(x_1+y_1)(x_2-y_2) +\lambda_6(x_1-y_1)^ 2.\end{multline*}   For $x_1=x_2=x$
and $y_1=y_2=y$ it gives us
$$(x-y)x = -2\lambda_5(x^2-y^2)+\lambda_3(x-y)^2+\lambda_4(x-y)(x+y)+\lambda_5(x+y)(x-y)
+\lambda_6(x-y)^2$$ whence we obtain 3 equations :
$\lambda_3+\lambda_4-\lambda_5+\lambda_6 = 0$,
$\lambda_3-\lambda_4+\lambda_5+\lambda_6=0$, $\lambda_3+\lambda_6 =
0$. This system has the solution of the form $\lambda_3 = \alpha,
\lambda_4 = \beta, \lambda_5 = \beta, \lambda_6 = - \alpha$.
Substituting into (\ref{temp2}) we get the equality $$(x_1-y_1)x_2 =
\alpha((x_1-y_1)(x_2-y_2) - (x_1-y_1)^2)$$ which is a contradiction.
 \end{proof}

\medskip

\begin{corollary}\label{free}
 The equality (\ref{mainalg}) is not true when $B =
\F_2$, the free algebra in two generators $a,b$. In particular the
element $(a\ot 1 - 1\ot a)b\ot 1 (a\ot 1 - 1\ot a)$ does not belong
to $\T_{Lie}(F_2)$.
\end{corollary}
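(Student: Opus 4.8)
The plan is to deduce this from Remark \ref{more} by abelianizing the free algebra. Write $P = (a\ot 1 - 1\ot a)(b\ot 1)(a\ot 1 - 1\ot a) \in \F_2\ot\F_2^{op}$ for the element in question.

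First I would record that $P\in\N_{Lie}(\F_2)$. Indeed $a\ot 1 - 1\ot a$ is one of the generators of $\T_{Lie}(\F_2)$ and $b\ot 1\in\F_2\ot\F_2^{op}$, so $P$ has the form $\T_{Lie}(\F_2)\,(\F_2\ot\F_2^{op})\,\T_{Lie}(\F_2)$, and the inclusion (\ref{mulby1}) gives $P\in\N_{Lie}(\F_2)$. Thus it suffices to prove $P\notin\T_{Lie}(\F_2)$, which already yields $\T_{Lie}(\F_2)\neq\N_{Lie}(\F_2)$ and hence the failure of (\ref{mainalg}).

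Next, let $\phi:\F_2\to\cP_2$ be the homomorphism sending the free generators $a,b$ to the commuting variables $x_1,x_2$. Because $\phi$ is an algebra homomorphism it is also one from $\F_2^{op}$ to $\cP_2^{op}$, and since $\cP_2$ is commutative (so $\cP_2^{op}=\cP_2$) it induces a homomorphism $\Phi=\phi\ot\phi:\F_2\ot\F_2^{op}\to\cP_2\ot\cP_2$. As $\Phi(c\ot 1-1\ot c)=\phi(c)\ot 1-1\ot\phi(c)$ for every $c$, the map $\Phi$ carries the generators of $\T_{Lie}(\F_2)$ to generators of $\T_{Lie}(\cP_2)$, hence $\Phi(\T_{Lie}(\F_2))\s\T_{Lie}(\cP_2)$.

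It remains to compute $\Phi(P)$ and invoke Remark \ref{more}. Identifying $\cP_2\ot\cP_2$ with $\cP_4$ as in the proof of Theorem \ref{counterex} (so $p\ot q$ becomes $p(\vec x)q(\vec y)$), one has $\Phi(a\ot 1-1\ot a)=x_1-y_1$ and $\Phi(b\ot 1)=x_2$; since the target is commutative, $\Phi(P)=(x_1-y_1)\,x_2\,(x_1-y_1)=(x_1-y_1)^2x_2$. By Remark \ref{more} this polynomial is not in $\T_{Lie}(\cP_2)$. Were $P$ in $\T_{Lie}(\F_2)$, its image $\Phi(P)$ would lie in $\T_{Lie}(\cP_2)$, a contradiction; hence $P\notin\T_{Lie}(\F_2)$. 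I expect no serious obstacle, since all the real work is already contained in Remark \ref{more}; the only points needing care are that $\Phi$ is a well-defined homomorphism respecting the opposite multiplication in the second factor, and the (commutative) multiplication yielding $(x_1-y_1)^2x_2$. The one conceptual idea is that abelianization transports the counterexample polynomial $(x_1-y_1)^2x_2$ from $\cP_2$ back to the free setting.
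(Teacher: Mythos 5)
Your proposal is correct and follows essentially the same route as the paper: push the element forward along the abelianization $\F_2\ot\F_2^{op}\to\cP_2\ot\cP_2\cong\cP_4$, observe that $\T_{Lie}(\F_2)$ maps into (in fact onto) $\T_{Lie}(\cP_2)$, and conclude from Remark \ref{more} that the image $(x_1-y_1)^2x_2$ cannot lie there. The membership $P\in\N_{Lie}(\F_2)$ via (\ref{mulby1}) is also exactly the paper's argument.
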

\begin{proof} Let $\pi: \F_2\to \cP_2$ be the homomorphism
which sends the first generator $a$ of $\F_2$ to $x_1$ and the
second generator $b$ --- to $x_2$. Denote by $\pi\otimes \pi $ the
corresponding homomorphism from $\F_2\otimes \F_2^{op}$ to the
algebra $\cP_2 \otimes \cP_2 = \cP_4$. Since $\pi\otimes \pi
(u\otimes 1 - 1\otimes u) = \pi(u)\ot 1 - 1\ot \pi(u) $ and $\pi$ is
surjective, we have the equality $\pi\otimes \pi(\T_{Lie}(\F_2)) =
\T_{Lie}(\cP_2)$.

Set $z = (a\ot 1 - 1\ot a)b\ot 1 (a\ot 1 - 1\ot a)$. This element
belongs to $\N_{Lie}(\F_2)$ by (\ref{mulby1}). On the other hand if
$z$ belongs to $\T_{Lie}(\F_2)$ then $\pi\otimes \pi (z) \in
\T_{Lie}(\cP_2) $. But this contradicts to Remark \ref{more},
because $\pi\otimes \pi (z) = (x_1-y_1)^2x_2 \notin \T_{Lie}(\cP_2)
$.
\end{proof}

\medskip

On the other hand, the equality (\ref{mainalg}) turns out to be true
for some important non-commutative examples.

\begin{theorem}\label{tanya}
The equality (\ref{mainalg}) holds if $B$ is an arbitrary semisimple finite-dimensional algebra.
\end{theorem}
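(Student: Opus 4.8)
The plan is to pass to the Wedderburn decomposition $B=\bigoplus_{i=1}^m B_i$ with $B_i\cong M_{n_i}(\mathbb C)$ (I assume the ground field is $\mathbb C$, so that the blocks are full matrix algebras and Burnside's theorem will apply), and to reduce (\ref{mainalg}) to a blockwise statement about $B\ot B^{op}=\bigoplus_{i,j}B_i\ot B_j^{op}$. Writing $\pi_i\colon B\to B_i$ for the projections and $e_i=1_i$ for the block units, I first read off the shape of $\N_{Lie}(B)$: if $z=\sum_k a_k\ot b_k$ has blocks $z_{ij}\in B_i\ot B_j^{op}$, then $\sum_k a_kb_k=\sum_i\mu_i(z_{ii})$ and $\sum_k b_ka_k=\sum_i\mu_i'(z_{ii})$, where $\mu_i,\mu_i'$ are the multiplication and the opposite multiplication on $B_i$, the cross terms vanishing because $B_iB_j=0$ for $i\neq j$. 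Thus (\ref{dir})--(\ref{conver}) constrain only the diagonal blocks, giving
$$\N_{Lie}(B)=\Big(\bigoplus_i\N_{Lie}(B_i)\Big)\oplus\Big(\bigoplus_{i\neq j}B_i\ot B_j^{op}\Big),$$
with the off-diagonal blocks unconstrained. Since $\N_{Lie}(B)$ is a subalgebra (Lemma \ref{semiideals}) containing the generators $a\ot1-1\ot a$, one always has $\T_{Lie}(B)\s\N_{Lie}(B)$; so it remains to prove that $\T_{Lie}(B)$ contains each $\N_{Lie}(B_i)$ and each off-diagonal block.

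I would treat the single simple block $\T_{Lie}(M_n)=\N_{Lie}(M_n)$ first. Here $M_n\ot M_n^{op}$ is simple and acts faithfully on $M_n$ by $a\ot b\mapsto L_aR_b$, so this representation identifies $M_n\ot M_n^{op}$ with $\mathrm{End}(M_n)$ and carries $\T_{Lie}(M_n)$ onto the associative algebra generated by the inner derivations $\delta_a=\mathrm{ad}\,a$. Splitting $M_n=\mathbb C1\oplus\mathfrak{sl}_n$, each $\delta_a$ annihilates $\mathbb C1$ and preserves $\mathfrak{sl}_n$, and on $\mathfrak{sl}_n$ the $\delta_a$ are the image of the adjoint representation of the simple Lie algebra $\mathfrak{sl}_n$, which is irreducible; by Burnside's theorem the algebra they generate is all of $\mathrm{End}(\mathfrak{sl}_n)$. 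Hence $\T_{Lie}(M_n)$ is carried exactly onto $\{T\in\mathrm{End}(M_n)\colon T(1)=0,\ T(M_n)\s\mathfrak{sl}_n\}$. On the other hand $\N_{Lie}(M_n)$ maps onto the same set, since $\sum a_kb_k=0$ expresses $T(1)=0$ and $\sum b_ka_k=0$ expresses $\mathrm{tr}(T(x))=\mathrm{tr}((\sum b_ka_k)x)=0$ for all $x$, i.e. $T(M_n)\s\mathfrak{sl}_n$. As the representation is an isomorphism, $\T_{Lie}(M_n)=\N_{Lie}(M_n)$ (both of dimension $(n^2-1)^2$).

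For the off-diagonal blocks the crucial point is that the central idempotents $f_{ij}=e_i\ot e_j$ with $i\neq j$ already lie in $\T_{Lie}(B)$. Indeed $w_i:=e_i\ot1-1\ot e_i=\sum_{k\neq i}(f_{ik}-f_{ki})$ is in $\T_{Lie}(B)$, and since the $f_{kl}$ are orthogonal central idempotents one computes $w_iw_j=-(f_{ij}+f_{ji})$ and $w_iw_jw_i=-(f_{ij}-f_{ji})$; adding and subtracting gives $f_{ij},f_{ji}\in\T_{Lie}(B)$. Being a central idempotent of $\T_{Lie}(B)$, $f_{ij}$ makes $f_{ij}\T_{Lie}(B)$ the block-$(i,j)$ part of $\T_{Lie}(B)$, and it is generated by the elements $f_{ij}(a\ot1-1\ot a)=\pi_i(a)\ot1_j-1_i\ot\pi_j(a)$. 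Letting $a$ vary so that $(\pi_i(a),\pi_j(a))$ ranges over all of $B_i\times B_j$ (possible precisely because $i\neq j$) produces every $X\ot1_j$ and every $1_i\ot Y$, and these generate the simple algebra $B_i\ot B_j^{op}$. Hence $B_i\ot B_j^{op}\s\T_{Lie}(B)$ for all $i\neq j$.

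Finally I recover the diagonal blocks: for $a\in B_i$ the total off-diagonal part $\sum_{k\neq l}f_{kl}(a\ot1-1\ot a)$ lies in $\T_{Lie}(B)$ by the previous step, so the complementary term $(a\ot1-1\ot a)-\sum_{k\neq l}f_{kl}(a\ot1-1\ot a)=a\ot1_i-1_i\ot a$ does too; these are exactly the generators of the copy of $\T_{Lie}(B_i)$ sitting in block $(i,i)$, whence $\N_{Lie}(B_i)=\T_{Lie}(B_i)\s\T_{Lie}(B)$ by the single-block case. Combining the two inclusions yields $\N_{Lie}(B)\s\T_{Lie}(B)$ and therefore equality. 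I expect the main obstacle to be the assembly rather than any individual block: one must guarantee that $\T_{Lie}(B)$ is large enough to split off every simple component of $B\ot B^{op}$, and the efficient way to force this is the identity $w_iw_j=-(f_{ij}+f_{ji})$, which deposits the off-diagonal central idempotents into $\T_{Lie}(B)$ and thereby decouples the blocks.
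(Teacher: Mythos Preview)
Your proof is correct. For a single block $M_n$ your argument and the paper's coincide in substance: both identify $M_n\otimes M_n^{op}$ with $\mathrm{End}(M_n)$, observe that everything in $\T_{Lie}$ annihilates $\mathbb C1$ and has range in $\mathfrak{sl}_n$, and then invoke Burnside on the restriction to the trace-zero part. The paper phrases the needed irreducibility as ``no nontrivial Lie ideal of $M_n$ strictly inside $\mathfrak{sl}_n$'' and cites Corollary~\ref{standard}, while you appeal directly to irreducibility of the adjoint representation of the simple Lie algebra $\mathfrak{sl}_n$; these are equivalent, and your route is arguably cleaner since it avoids the detour through Theorem~\ref{polyn} and Corollary~\ref{subbimmat}. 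Where you add genuine content is the direct-sum step: the paper simply asserts that the class $\frak L$ is closed under direct sums (``easy to check'') and immediately passes to a single matrix block, whereas you supply the key identity $w_iw_j=-(f_{ij}+f_{ji})$, $w_iw_jw_i=-(f_{ij}-f_{ji})$ that places the off-diagonal central idempotents $f_{ij}=e_i\otimes e_j$ inside $\T_{Lie}(B)$. From this the inclusions $B_i\otimes B_j^{op}\subset\T_{Lie}(B)$ for $i\neq j$ and then $a\otimes 1_i-1_i\otimes a\in\T_{Lie}(B)$ for $a\in B_i$ follow exactly as you indicate, which is precisely the missing justification for the paper's one-line claim.
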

\begin{proof}
It is easy to check that the class $\frak L$ is closed under forming direct sums. Thus, by the Wedderburn's
Theorem, it suffices to prove the equality (\ref{mainalg}) for $B = M_n(\mathbb{C})$, the algebra of all complex
$n\times n$-matrices.

Denote, for brevity,  $\N_{Lie}(B)$ by $\N$ and $\T_{Lie}(B)$ by $\T$. Let $\pi$ be the representation of
$B\otimes B^{op}$ on the space $B$, defined by the equality: $\pi(a\ot b)(x) = axb$. Then $\pi$ is irreducible
and faithful (because $B$ and $B\otimes B^{op}$ are simple). So it suffices to show that $\pi(\T) = \pi(\N)$.

Set $H_1 = \mathbb{C}1$ and $H_2 = \{x\in M: tr (x) = 0\}$. These
subspaces are invariant for $\pi(\N)$ (hence for $\pi(\T))$.
Moreover
$$H_1 = \Ker \pi(\N) = \Ker \pi(\T)$$
and
$$H_2\supseteq \pi(\N)B \supseteq \pi(\T)B.$$
Indeed it is easy to see that $H_1\s \Ker \pi(\N)\s \Ker \pi(\T) =
H_1$. Moreover if $T = \sum_ia_i\ot b_i \in \N$ then $tr(\pi(T)x)
= tr \sum_ia_ixb_i = tr \sum_ib_ia_i x = 0$, for each $x\in B$, so
the range of $\pi(\N)$ is contained in $H_2$.

Let $\cS$ denote the restriction of the algebra $\pi(\T)$ to $H_2$.
Then each non-trivial invariant subspace of the algebra $\cS$ is a
non-zero  Lie ideal of $B$ strictly contained in $H_2$. By Corollary
\ref{standard}, $B$ has no such Lie ideals. So $\cS$ has no
non-trivial invariant subspaces; by Burnside's Theorem, $\cS$
coincides with the algebra $L(H_2)$ of all operators on $H_2$. Hence
the restriction of the algebra $\pi(\N)$ to $H_2$ is also $L(H_2)$.
Since $H_1 = \Ker \pi(\T) = \Ker \pi(\N)$ and the space $B$ is a
direct sum of $H_1$ and $H_2$, we conclude that $\pi(\N) = \pi(\T)$
and $\N = \T$.
\end{proof}

\medskip
Below we always denote by $\widetilde{B}$ the unitization of $B$.

Let $X$ be a linear space, $\cL(X)$ the algebra of all linear
operators on $X$,  $F(X)$ --- the algebra of all finite-rank
operators on $X$.  The algebra $\widetilde{F(X)}$ in this case can
be realized as $F(X) + \mathbb{C}1 \s \cL(X)$. Our next aim is to
show that for the  algebra $\widetilde{F(X)}$ the equality
(\ref{mainalg}) holds.

\begin{lemma}\label{ident} Let $B$ be a unital algebra. Then

(i) For any $x\in B$, the element $1\ot x^2 -x\ot x$ belongs to
$\T_{Lie}(B)$;

(ii) If $a,x\in B$ and $ax = xa = 0$ then
\begin{equation}\label{form}
a\ot x^3 = (1\ot x^2 - x\ot x)(a\ot 1 - 1\ot a)(x\ot 1 - 1\ot x)
\end{equation}
whence $a\ot x^3 \in \T_{Lie}(B)$.
\end{lemma}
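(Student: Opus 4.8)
The plan is to reduce both parts to explicit computations in $B\ot B^{op}$, using that $\T_{Lie}(B)$ is the subalgebra generated by the elements $\delta_a := a\ot 1 - 1\ot a$ and is therefore closed under linear combinations and products. For (i), I would start from two elements visibly in $\T_{Lie}(B)$: the generator $\delta_{x^2}=x^2\ot 1-1\ot x^2$ and the square $\delta_x^2$. Expanding $\delta_x^2$ in $B\ot B^{op}$ --- where the second tensor leg multiplies in the reversed order --- gives $\delta_x^2 = x^2\ot 1 - 2(x\ot x)+1\ot x^2$, so that $\delta_x^2-\delta_{x^2}=2(1\ot x^2-x\ot x)$. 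Since we work over $\mathbb C$ we may divide by $2$ and conclude $1\ot x^2-x\ot x=\tfrac12(\delta_x^2-\delta_{x^2})\in\T_{Lie}(B)$.

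For (ii) I would verify the product identity (\ref{form}) by multiplying the three factors left to right and discarding, at each stage, the monomials that the hypotheses $ax=xa=0$ annihilate. Write $P:=1\ot x^2-x\ot x$ for the first factor. Computing $P\delta_a$, three of its four terms vanish --- those coming from $(1\ot x^2)(1\ot a)$, $(x\ot x)(a\ot 1)$ and $(x\ot x)(1\ot a)$, which carry the factors $ax^2$, $xa$ and $ax$ respectively --- leaving $P\delta_a=a\ot x^2$ (so already $a\ot x^2\in\T_{Lie}(B)$). Multiplying once more by $\delta_x$, the term $(a\ot x^2)(x\ot 1)$ dies because $ax=0$, so only $(a\ot x^2)(1\ot x)=a\ot x^3$ survives; this is precisely the content of (\ref{form}), with the overall sign fixed by the reversed multiplication on the second leg.

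Membership is then immediate: $P\in\T_{Lie}(B)$ by part (i), the factors $\delta_a,\delta_x$ are generators, and $\T_{Lie}(B)$ is closed under products, whence $a\ot x^3\in\T_{Lie}(B)$. I expect no genuine obstacle, since the lemma is essentially a verification; the only point requiring care is the noncommutative bookkeeping --- keeping the opposite multiplication on the second tensor leg straight, tracking signs, and confirming that every discarded monomial really does contain one of $ax$, $xa$, $ax^2$ as a subproduct. The single substantive input is part (i), which is what guarantees that the quadratic factor $P$ lies in $\T_{Lie}(B)$ in the first place.
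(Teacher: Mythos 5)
Your proof is correct and takes essentially the same route as the paper: part (i) rests on the identity $2(1\ot x^2-x\ot x)=(x\ot 1-1\ot x)^2-(x^2\ot 1-1\ot x^2)$, and part (ii) is the same direct expansion of the triple product using $ax=xa=0$ to kill all but one monomial. (One small caveat: the surviving term in the final multiplication is $-(a\ot x^2)(1\ot x)$, so the product actually equals $-a\ot x^3$ rather than $a\ot x^3$ --- the same harmless sign slip appears in the paper's displayed formula, and it is immaterial for the membership conclusion since $\T_{Lie}(B)$ is a linear subspace.)
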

\begin{proof}
Part (i) follows from the equality $2(1\ot x^2 -x\ot x) = (x\ot 1
-1\ot x)^2 - (x^2\ot 1 - 1\ot x^2)$. The formula (\ref{form}) can
be checked by easy calculation; using (i) this implies part (ii).
\end{proof}

\medskip

\begin{theorem} \label{Ftens}
The equality (\ref{mainalg}) holds if $B = \widetilde{F(X)}$.
\end{theorem}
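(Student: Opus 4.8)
First, note that the inclusion $\T_{Lie}(B) \subseteq \N_{Lie}(B)$ is automatic: it follows from Lemma \ref{semiideals}, or directly from the fact that the generators $a \otimes 1 - 1 \otimes a$ satisfy (\ref{dir}) and (\ref{conver}) and these two conditions are stable under the product of $B \otimes B^{op}$. So the real task is the reverse inclusion $\N_{Lie}(B) \subseteq \T_{Lie}(B)$, and my plan is to localize every element of $\N_{Lie}(B)$ into a finite-dimensional semisimple corner of $B$ and then quote Theorem \ref{tanya}.

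I would fix $f = \sum_{k=1}^m a_k \otimes b_k \in \N_{Lie}(B)$ and write $a_k = \alpha_k 1 + a_k'$, $b_k = \beta_k 1 + b_k'$ with $\alpha_k, \beta_k \in \mathbb{C}$ and $a_k', b_k' \in F(X)$. The finitely many operators $a_k', b_k'$ have finite-dimensional ranges and finite-codimensional kernels, so I would choose a single finite-rank idempotent $p \in F(X)$ with $p a_k' = a_k' p = a_k'$ and $p b_k' = b_k' p = b_k'$ for all $k$; concretely, I would take $\mathrm{ran}(p)$ to contain all the ranges of the $a_k', b_k'$ and $\ker(p)$ to lie inside the intersection $N$ of all their kernels, which is possible because $N$ has finite codimension and hence admits a finite-dimensional complement into which the range of $p$ can be fitted.

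Next I would set $C = pBp + \mathbb{C}(1-p)$. Since $p$ has finite rank $n$, one has $pBp = pF(X)p \cong M_n(\mathbb{C})$ with unit $p$, while $\mathbb{C}(1-p)$ is a copy of $\mathbb{C}$ with unit $1-p$; as $p$ and $1-p$ are orthogonal idempotents the cross products vanish, so $C \cong M_n(\mathbb{C}) \oplus \mathbb{C}$ is a unital subalgebra of $B$ containing $1 = p + (1-p)$. By the choice of $p$ every $a_k$ and $b_k$ lies in $C$, hence $f \in C \otimes C^{op}$, and since (\ref{dir}) and (\ref{conver}) are precisely the defining relations of $\N_{Lie}(C)$ I would conclude $f \in \N_{Lie}(C)$.

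Finally, $C$ is semisimple and finite-dimensional, so Theorem \ref{tanya} yields $\N_{Lie}(C) = \T_{Lie}(C)$ and therefore $f \in \T_{Lie}(C)$. The embedding of the unital subalgebra $C$ into $B$ induces an injection $C \otimes C^{op} \hookrightarrow B \otimes B^{op}$ sending each generator $c \otimes 1 - 1 \otimes c$ of $\T_{Lie}(C)$ to a generator of $\T_{Lie}(B)$, so $\T_{Lie}(C) \subseteq \T_{Lie}(B)$ and $f \in \T_{Lie}(B)$. I expect the only genuine obstacle to be the localization step — manufacturing the single finite-rank idempotent $p$ that absorbs all the finite-rank parts on both sides — after which the conclusion is a formal consequence of the finite-dimensional case. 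If one preferred to avoid Theorem \ref{tanya}, Lemma \ref{ident} supplies exactly the elements $a \otimes x^3$ with $ax = xa = 0$ needed to assemble $\N_{Lie}(B)$ from inner derivations by hand.
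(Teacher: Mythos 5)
Your proof is correct, and it takes a mildly but genuinely different route from the paper's. The paper first decomposes an element of $\N_{Lie}(B)$ as $\lambda + a\ot 1 + 1\ot b + \sum a_i\ot b_i$ with $a,b,a_i,b_i\in F(X)$, observes $\lambda=0$, and then invokes Lemma \ref{ident}(ii) (the identity $a\ot q^3\in \T_{Lie}(B)$ for $aq=qa=0$, applied with $q=1-p$) to replace $a\ot 1$ by $a\ot p$ and $1\ot b$ by $p\ot b$ modulo $\T_{Lie}(B)$; only after this reduction does it compress by a common finite-rank projection into $\cL(pX)\cong M_n$ and quote Theorem \ref{tanya}. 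You instead absorb the scalar parts directly by passing to the unital subalgebra $C = pBp + \mathbb{C}(1-p)\cong M_n(\mathbb{C})\oplus\mathbb{C}$, which contains $1_B$ and all the $a_k,b_k$; since the inclusion $C\hookrightarrow B$ is unital, the induced map $C\ot C^{op}\to B\ot B^{op}$ sends the generators $c\ot 1 - 1\ot c$ of $\T_{Lie}(C)$ to generators of $\T_{Lie}(B)$, so Theorem \ref{tanya} applied to $C$ finishes the argument with no correction terms. What your version buys is the complete elimination of Lemma \ref{ident} from this proof; what it costs is that you must use Theorem \ref{tanya} for the direct sum $M_n\oplus\mathbb{C}$ rather than for a single matrix algebra, i.e., you lean on the closure of the class $\frak L$ under direct sums (which the paper asserts but does not prove in detail), and you silently assume $\dim X=\infty$ so that the decomposition $a_k=\alpha_k 1 + a_k'$ is available --- the finite-dimensional case should be dispatched separately by Theorem \ref{tanya}, as the paper does. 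Your closing aside that Lemma \ref{ident} could replace Theorem \ref{tanya} altogether is not substantiated and should not be read as part of the proof, but the main argument stands without it.
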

\begin{proof}
Note first of all that for finite-dimensional $X$ the result
immediately follows from Theorem \ref{tanya}. So we have to
consider only the case when $dim(X) = \infty$.

An arbitrary element of $B\otimes B^{op}$ can be written in the
form $R = \lambda + a\ot 1 + 1\ot b + \sum _{i=1}^n a_i\ot b_i$,
with $a, b, a_i, b_i \in F(X)$. If $R\in \N_{Lie}(B)$ then it is
easy to see that $\lambda = 0$.

Let $a\in F(X)$ and let $p$ be a finite-rank projection in $F(X)$
such that $ap = pa = a$. Setting $q = 1-p$ we get $aq = qa = 0$. By
Lemma \ref{ident}, $a\ot q^3 \in \T_{Lie}(B)$. Since $q^3 = q$ we
see that $a\ot p - a\ot 1 \in \T_{Lie}(B)$. Similarly $1\ot b - p\ot
b \in \T_{Lie}(B)$ for an appropriate projection $p\in F(X)$. It
follows that modulo $\T_{Lie}(B)$ each element of $\N_{Lie}(B)$ can
be written in the form
$$R = \sum_{i=1}^n a_i\ot b_i$$
with $a_i, b_i \in F(X)$.

Let now $p$ be a finite rank projection such that $a_ip = pa_i =
a_i$ and $b_ip = pb_i = b_i$ for all $i$. Then all $a_i$ and $b_i$
can be considered as operators on finite-dimensional space $Y =
pX$. By Theorem \ref{tanya}, $R$ belongs to the algebra
$\T_{Lie}(\cL(Y))$. But  the natural imbedding of $\cL(Y)$ into
$\cL(X)$ maps $\T_{Lie}(\cL(Y))$ into $\T_{Lie}(F(X)) =
\T_{Lie}(B)$. We conclude that $R\in \T_{Lie}(B)$.
\end{proof}

\medskip

Turning to elementary operators we have the problem of the validity
 of the equality
\begin{equation}\label{mainelem}
\D_{Lie}(B,\X) = \M_{Lie}(B,\X).
\end{equation}

 It is straightforward that if for an algebra $\widetilde{B}$  the
 equality (\ref{mainalg}) holds then (\ref{mainelem}) is also true.
 As a consequence we obtain

 \begin{corollary}
 For algebras $M_n$, $\widetilde{F(X)}$ and for each algebra with one generator, the equality (\ref{mainelem})
 holds.
 \end{corollary}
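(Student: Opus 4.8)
The plan is to deduce the statement entirely from the remark immediately preceding it, which asserts that whenever the unitization $\widetilde{B}$ satisfies (\ref{mainalg}), the equality (\ref{mainelem}) holds for $B$. This reduction is the engine of the argument, and since it is stated as given, the whole task collapses to verifying that in each of the three listed cases the unitization $\widetilde{B}$ lies in the class $\frak L$ of algebras satisfying (\ref{mainalg}). I would therefore treat the three cases separately, invoking in each the relevant theorem already established.

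First, for an algebra $B$ generated by a single element $a$, I would note that the unitization $\widetilde{B} = \mathbb{C}1 + B$ is a \emph{unital} algebra generated, as a unital algebra, by the same element $a$ (the unit being automatically available). Hence $\widetilde{B}$ is again an algebra with one generator, and Theorem \ref{polyn} gives $\widetilde{B} \in \frak L$. Next, for $B = M_n$, which is already unital, the unitization splits as $\widetilde{M_n} \cong M_n \oplus \mathbb{C}$; this is semisimple and finite-dimensional, so Theorem \ref{tanya} yields $\widetilde{M_n} \in \frak L$. Finally, for $B = \widetilde{F(X)}$, again unital, one has $\widetilde{\widetilde{F(X)}} \cong \widetilde{F(X)} \oplus \mathbb{C}$; here Theorem \ref{Ftens} gives $\widetilde{F(X)} \in \frak L$, the factor $\mathbb{C}$ lies in $\frak L$ trivially, and the closure of $\frak L$ under direct sums (observed in the proof of Theorem \ref{tanya}) delivers $\widetilde{\widetilde{F(X)}} \in \frak L$.

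In all three cases $\widetilde{B} \in \frak L$, so (\ref{mainalg}) holds for $\widetilde{B}$ and the remark then produces (\ref{mainelem}), completing the argument. I do not expect a genuine obstacle: the statement is a direct corollary of the three structural theorems combined with the reduction to the unitization. The single point deserving a moment's care is that passing to the unitization preserves membership in $\frak L$ — which is immediate from Theorem \ref{polyn} in the one-generator case, and in the other two follows from the direct-sum closure of $\frak L$ together with the elementary fact that the unitization of a unital algebra $A$ decomposes as $A \oplus \mathbb{C}$.
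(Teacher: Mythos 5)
Your proposal is correct and follows exactly the route the paper intends: the corollary is stated as an immediate consequence of the preceding remark (that $(\ref{mainalg})$ for $\widetilde{B}$ implies $(\ref{mainelem})$ for $B$) together with Theorems \ref{polyn}, \ref{tanya} and \ref{Ftens}, and the paper supplies no further argument. Your extra care about the unitization of an already unital algebra (via the splitting $A\oplus\mathbb{C}$ and the direct-sum closure of $\frak L$) is sound and only makes explicit what the paper leaves implicit.
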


The next result extends part (ii) of Theorem \ref{counterex}.

\begin{theorem} $\M_{Lie}(\F_2)\neq \D_{Lie}(\F_2)$.
\end{theorem}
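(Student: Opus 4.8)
The plan is to derive the theorem from the strict inclusion $\T_{Lie}(\F_2)\subsetneq\N_{Lie}(\F_2)$ already established in Corollary \ref{free}, by showing that the canonical surjective homomorphism $\pi\colon\F_2\otimes\F_2^{op}\to El(\F_2)$, $\pi(a\otimes b)=L_aR_b$, is \emph{injective}. Indeed $\pi$ carries $\T_{Lie}(\F_2)$ onto $\D_{Lie}(\F_2)$ and $\N_{Lie}(\F_2)$ onto $\M_{Lie}(\F_2)$, so injectivity transports the inclusion verbatim: the element $z=(a\otimes 1-1\otimes a)\,b\otimes 1\,(a\otimes 1-1\otimes a)$ of Corollary \ref{free} lies in $\N_{Lie}(\F_2)$ by \eqref{mulby1}, whence $\pi(z)\in\M_{Lie}(\F_2)$; and if we had $\pi(z)\in\D_{Lie}(\F_2)=\pi(\T_{Lie}(\F_2))$, then $\pi(z)=\pi(w)$ with $w\in\T_{Lie}(\F_2)$ would force $z=w\in\T_{Lie}(\F_2)$ by injectivity, contradicting Corollary \ref{free}. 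Thus $\pi(z)\in\M_{Lie}(\F_2)\setminus\D_{Lie}(\F_2)$ and the two algebras differ.

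Everything therefore reduces to faithfulness of the bimodule action of $\F_2\otimes\F_2^{op}$ on $\F_2$, i.e.\ to the implication: if $\sum_i a_ixb_i=0$ for all $x\in\F_2$ then $\tau:=\sum_i a_i\otimes b_i=0$. To prove this I would fix a nonzero $\tau$, expand each $a_i$ and $b_i$ in the basis of monomials (words in the generators $a,b$), writing $a_i=\sum_u\alpha^i_u\,u$, $b_i=\sum_v\beta^i_v\,v$, and let $N$ exceed the length of every word occurring in some $a_i$ or $b_i$. The single test element is the word $x_N=a^Nba^N$, and the point is to read off the coefficients of $\sum_i a_ix_Nb_i$ in the monomial basis.

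The crux is a combinatorial separation lemma: for words $u,v$ of length $<N$ the assignment $(u,v)\mapsto u\,a^Nba^N\,v$ is injective. In any word of the form $u\,a^Nba^N\,v$ the central letter $b$ is the unique occurrence of $b$ flanked on both sides by runs of $a$'s of length $\ge N$ (a $b$ inside $u$ has fewer than $N$ consecutive $a$'s to its left, and a $b$ inside $v$ fewer than $N$ to its right, because $|u|,|v|<N$); this occurrence locates the split, and deleting the $N$ flanking $a$'s on each side recovers $u$ and $v$ uniquely. Consequently the products $u\,x_N\,v$ coming from different basis pairs never collide, so the coefficient of $u\,x_N\,v$ in $\sum_i a_ix_Nb_i$ equals $\sum_i\alpha^i_u\beta^i_v$. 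If $\sum_i a_ix_Nb_i=0$, all these numbers vanish, which is exactly the assertion that $\tau=\sum_{u,v}\bigl(\sum_i\alpha^i_u\beta^i_v\bigr)\,u\otimes v=0$ in the monomial basis of $\F_2\otimes\F_2^{op}$. This gives injectivity of $\pi$ and with it the theorem.

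I expect the separation lemma to be the only genuine obstacle: one must choose the test word so that multiplying the bounded-length words $u,v$ on the two sides cannot produce coincidences, and the two-sided marker $a^Nba^N$ achieves this precisely because a long $a$-run cannot be manufactured inside the short words $u$ and $v$. If one preferred to avoid the word combinatorics, the same injectivity could be obtained by invoking residual finite-dimensionality of $\F_2$: from $\sum_i a_ixb_i=0$ one gets $\sum_i\rho(a_i)M\rho(b_i)=0$ for every surjective finite-dimensional representation $\rho\colon\F_2\to M_n$ and all $M\in M_n$, hence $\sum_i\rho(a_i)\otimes\rho(b_i)=0$ since $M_n\otimes M_n^{op}\cong\mathrm{End}(M_n)$, and separating points by such $\rho$ forces $\tau=0$. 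The direct word argument is, however, elementary and self-contained, and is the route I would write up.
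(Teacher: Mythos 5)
Your proof is correct, and its overall architecture is exactly that of the paper: both arguments reduce the theorem to Corollary \ref{free} by showing that the standard representation $f\colon \F_2\otimes\F_2^{op}\to El(\F_2)$ is injective, so that $f$ carries $\T_{Lie}(\F_2)$ onto $\D_{Lie}(\F_2)$ and $\N_{Lie}(\F_2)$ onto $\M_{Lie}(\F_2)$ and the strict inclusion transports verbatim. The only divergence is in how injectivity is obtained: the paper simply cites Theorems 2.3.13 and 2.4.4 of Beidar--Martindale--Mikhalev (the free algebra is centrally closed, hence its multiplication algebra is faithfully represented), whereas you prove faithfulness directly by the separation lemma for the test word $a^Nba^N$ --- the central $b$ is the unique occurrence of $b$ flanked on both sides by runs of at least $N$ letters $a$, which cannot be manufactured inside words of length $<N$, so distinct basis pairs $(u,v)$ yield distinct monomials $u\,a^Nba^N\,v$ and the coefficients $\sum_i\alpha^i_u\beta^i_v$ are all forced to vanish. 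That argument is sound and complete (it correctly handles pairs where $u$ or $v$ contains no $b$, since locating the marked $b$ and stripping $N$ letters $a$ from each side still recovers $u$ and $v$). What you gain is a fully elementary, self-contained proof; what the paper's citation buys is brevity and a statement valid for a much wider class of centrally closed algebras. Your alternative sketch via finite-dimensional representations would need an extra argument that the maps $\rho\otimes\rho$ jointly separate points of $\F_2\otimes\F_2^{op}$, but since you offer it only as an aside and the word-combinatorial route stands on its own, this does not affect the correctness of the proposal.
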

\begin{proof}  Let $f: \F_2\otimes \F_2^{op} \to
El(F_2)$ be the standard representation of $\F_2\otimes \F_2^{op}$
by elementary operators on $\F_2$. By Theorem 2.3.13 of \cite{BMM},
it is injective (because $\F_2$ is centrally closed, by Theorem
2.4.4 of \cite{BMM}); the surjectivity of $f$ is obvious. It follows
that $f(\T_{Lie}(\F_2)) = \D_{Lie}(\F_2)$ and $f(\N_{Lie}(\F_2)) =
\M_{Lie}(\F_2)$; using Corollary \ref{free}, we conclude that
$\D_{Lie}(\F_2)\neq M(\F_2)$.
\end{proof}

 \mm

\section{Commutative Banach algebras}

Let now $B$ be a Banach algebra.  A natural Banach-algebraic
analogue of (\ref{mainalg}) is the equality (\ref{baneq}) which for
commutative $B$ is equivalent to
\begin{equation}\label{banalg}
\overline{\T_{Lie}(B)} = \fN_{Lie}(B).
\end{equation}
We are going to consider the question of the validity of these
equalities for different Banach algebras.

 Let us firstly list
some consequences of Theorem \ref{polyn}.

Recall that if $B$ is a function algebra on a compact $K$, such
that $\|f\|_B \ge \|f\|_{C(K)}$ then the natural embedding of
$B\hat{\otimes} B$ into $C(K\times K)$ is injective, so
$B\hat{\otimes}B$ can be considered as a subalgebra of $C(K\times
K)$.
\begin{corollary}\label{disk}
Let $B$ be an algebra of functions on a compact $K\subset \mathbb
C$, supplied with a complete norm in which polynomials are dense in
$B$.   Then $\overline{\T_{Lie}(B)}$ coincides with the ideal $J =
\{f(x,y)\in B\hat{\otimes} B: f(x,x) = 0\}$.
\end{corollary}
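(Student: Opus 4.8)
The plan is to prove the equality by establishing the two inclusions, the nontrivial one being $J\s\overline{\T_{Lie}(B)}$. The organizing observation is that, because $B$ is commutative, the multiplication map $\mu:B\hat{\otimes}B\to B$, $\mu(a\ot b)=ab$, is a \emph{contractive algebra homomorphism} (commutativity is exactly what makes $\mu$ multiplicative), and that for $f=\sum a_i\ot b_i$ one has $\mu(f)(x)=\sum a_i(x)b_i(x)=f(x,x)$, i.e. $\mu(f)$ is the restriction of $f$ to the diagonal. Hence $J=\ker\mu$ is a closed ideal of $B\hat{\otimes}B$. Each generator $a\ot1-1\ot a$ of $\T_{Lie}(B)$ lies in $\ker\mu$, so $\T_{Lie}(B)\s\ker\mu$ and, by continuity, $\overline{\T_{Lie}(B)}\s J$; this is the easy inclusion (it also follows from (\ref{ord}) together with the identification $\fN_{Lie}(B)=J$ recorded in the introduction).

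For the reverse inclusion I would argue by approximation, with Theorem \ref{polyn} as the engine. First, since polynomials are dense in $B$, the algebraic tensor product of polynomials $\mathbb{C}[x]\odot\mathbb{C}[x]\cong\mathbb{C}[x,y]$ is dense in $B\hat{\otimes}B$: truncate a projective representation $f=\sum a_i\ot b_i$ (with $\sum\|a_i\|\|b_i\|<\infty$) to a finite sum and approximate each factor by a polynomial, using $\|a\ot b-p\ot q\|\le\|a-p\|\,\|b\|+\|p\|\,\|b-q\|$. So, given $f\in J$ and $\eps>0$, choose $h\in\mathbb{C}[x,y]$ with $\|f-h\|_{B\hat{\otimes}B}$ as small as we please. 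In general $h\notin J$, but its diagonal $r(x):=h(x,x)=\mu(h)$ is small: since $\mu(f)=0$ and $\|\mu\|\le1$, we have $\|r\|_B=\|\mu(h-f)\|_B\le\|h-f\|_{B\hat{\otimes}B}$.

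The correction step is to pass from $h$ to $g:=h-r\ot 1\in\mathbb{C}[x,y]$. Then $g(x,x)=h(x,x)-r(x)=0$, so by Theorem \ref{polyn}, which identifies $\T_{Lie}(\mathbb{C}[x])$ with $\N_{Lie}(\mathbb{C}[x])=\{p\in\mathbb{C}[x,y]:p(x,x)=0\}$, we get $g\in\T_{Lie}(\mathbb{C}[x])$; since $\mathbb{C}[x]\s B$, the corresponding combination of generators places $g\in\T_{Lie}(B)$. Finally $\|f-g\|\le\|f-h\|+\|r\ot1\|\le(1+\|1\|_B)\,\|h-f\|$, which is below $\eps$ once $h$ is chosen accurately enough. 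Letting $\eps\to0$ yields $f\in\overline{\T_{Lie}(B)}$, and combined with the first paragraph this gives $J=\overline{\T_{Lie}(B)}$.

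The one step that uses more than soft analysis is the correction: it is not enough that $g$ land in $J$, it must land in the \emph{algebraic} algebra $\T_{Lie}(\mathbb{C}[x])$, and this is exactly what Theorem \ref{polyn} guarantees. I expect the remaining care to be routine bookkeeping: justifying the density of $\mathbb{C}[x]\odot\mathbb{C}[x]$ in $B\hat{\otimes}B$ from density of polynomials in $B$, and tracking the constant $\|1\|_B$ in the final estimate. A less self-contained route would be to invoke the two facts stated in the introduction — that a dense subalgebra satisfying (\ref{mainalg}) forces (\ref{baneq}), applied to the polynomial subalgebra via Theorem \ref{polyn}, together with $\overline{\N_{Lie}(B)}=\fN_{Lie}(B)=J$ for commutative $B$ — but the direct approximation argument above seems cleaner.
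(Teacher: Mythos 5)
Your proof is correct, and it reorganizes the argument in a way that differs from the paper's. The paper first rewrites $f=\sum_i a_i\ot b_i\in J$ term by term as $\sum_i(a_i(x)-a_i(y))b_i(y)$ (a norm-convergent series, since $f(y,y)=0$), thereby reducing to the single generator $(a(x)-a(y))b(y)$, and then runs two nested closure arguments: first the set of $a$ for which this lies in $\overline{\T_{Lie}(B)}$ is closed and contains the polynomials by Theorem \ref{polyn}, then likewise for $b$. You instead approximate $f$ globally by a two-variable polynomial $h$ and correct it to $g=h-\mu(h)\ot 1$, which vanishes on the diagonal and so lands in $\T_{Lie}(\mathbb{C}[x])\s\T_{Lie}(B)$ by the same Theorem \ref{polyn}; the contractivity of the multiplication map $\mu$ gives the quantitative bound $\|f-g\|\le(1+\|1\|_B)\|f-h\|$. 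Both proofs rest on the same two ingredients (Theorem \ref{polyn} and density of polynomials), and both exploit the ``subtract the diagonal'' identity that the paper records in the introduction as $\sum a_k\ot b_k=\sum(a_k\ot b_k-a_kb_k\ot1)$ --- the paper applies it termwise before approximating, you apply it once to the approximant. What your version buys is an explicit error estimate and a single approximation step; what the paper's version buys is that it never needs to name the multiplication map or its norm, only the convergence of $\sum\|a_i\|\|b_i\|$. Your identification of $J$ with $\ker\mu$ is also a clean way to make the hypothesis $f(x,x)=0$ precise. No gaps.
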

\begin{proof}
Let $f(x,y) = \sum_{i=1}^{\infty} a_i(x)b_i(y) \in J$, then $f(x,y)
= \sum_{i=1}^{\infty}(a_i(x)-a_i(y))b_i(y)$ and the series of norms
converges. Hence it suffices to show that
\begin{equation}\label{belongs}
(a(x)-a(y))b(y) \in \overline{\T_{Lie}(B)}.
\end{equation}
 Let firstly $b$ be a
polynomial. The set of all $a(x)$ for which (\ref{belongs}) holds,
is closed in $\B$. It contains all polynomials by Theorem
\ref{polyn}. Hence it coincides with $\B$. Thus (\ref{belongs})
holds for each $a\in \B$ and each polynomial $b$. Since the set of
all $b$, for which (\ref{belongs}) holds with given $a$, is
closed, the condition (\ref{belongs}) holds for all $a,b\in \B$.
\end{proof}

\medskip

 As example for $\B$, one can take $C(0,1)$, or $C^p(0,1)$, or the
disk algebra or, more generally, the closure of polynomials in $C(K)$, for arbitrary $K$,  or the algebra of
absolutely convergent Taylor series on $\mathbb D$.

\medskip

{\bf Problem}. Is the result of Corollary \ref{disk} true for the
algebra $A(K)$ or $R(K)$, where $K\subset \mathbb C$ is arbitrary
compact? Here $A(K)\subset C(K)$ is the algebra of all functions
on $K$, analytical on $int(K)$, $R(K)$ --- the closure of the
algebra of rational functions with poles outside $K$.

\medskip

 Let us look what Corollary \ref{disk} gives for the case of Lie
 submodules in Banach bimodules.

 Denote by $\frak T(\mathbb{D})$ the algebra of all absolutely converging Taylor
 series on $\mathbb D$, that is all functions
 $f(z) = \sum_{k=1}^{\infty}\gamma_k z^k$ with $\|f\|_{\frak T}: = \sum_k|\gamma_k| <
 \infty$. It is clear that the functions in $\frak T(\mathbb{D})$ can be applied to
 any operator $A$ of norm $\le 1$, and that $\|f(A)\|\le
 \|f\|_{\frak T}$. Hence for any function $f$ in the algebra $\cS = \frak{T}(\mathbb
 D)\hat{\otimes}\frak{T}(\mathbb D)$ and any two commuting operators $A$,
 $B$ with norms $\le 1$, one can calculate $f(A,B)$ and
 $\|f(A,B)\|\le \|f\|_{\cS}$.

 \begin{corollary}
 Let $\X$ be a Banach bimodule over a Banach algebra $\A$. Let
 $\cL$ be a closed Lie submodule in $\X$. If $a\in \A$, $\|a\|\le
 1$, then for each function $f(\lambda,\mu)$ in $\cS = \frak{T}(\mathbb
 D)\hat{\otimes}\frak{T}(\mathbb D)$ with $f(\lambda,\lambda) = 0$,
 the operator $f(L_a,R_a)$ leaves $\cL$
 invariant.
 \end{corollary}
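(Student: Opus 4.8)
The plan is to realise the hypothesis $f(\lambda,\lambda)=0$ as membership in the closure of $\T_{Lie}(\frak T(\mathbb D))$, and then to transport that membership through the functional calculus $f \mapsto f(L_a,R_a)$, under which the generators of $\T_{Lie}$ become inner derivations. First I would apply Corollary \ref{disk} with $B = \frak T(\mathbb D)$: this is a function algebra on the closed disk $\mathbb D$, its norm $\|\cdot\|_{\frak T}$ is complete and dominates the sup-norm (so the embedding into $C(\mathbb D\times\mathbb D)$ is injective), and polynomials are dense in it, since the partial sums of a Taylor series converge in $\|\cdot\|_{\frak T}$. Hence Corollary \ref{disk} gives $\overline{\T_{Lie}(\frak T(\mathbb D))} = J = \{f \in \cS : f(\lambda,\lambda)=0\}$, so the assumption on $f$ is exactly $f \in \overline{\T_{Lie}(\frak T(\mathbb D))}$.

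Next I would set up the functional calculus as an algebra homomorphism. Since $L_aR_a = R_aL_a$ on $\X$ and $\|L_a\|,\|R_a\|\le \|a\|\le 1$, the map $\Phi:\cS \to \cB(\X)$ determined on elementary tensors by $\Phi(g\ot h) = g(L_a)h(R_a)$ is a well-defined bounded algebra homomorphism with $\Phi(f)=f(L_a,R_a)$ and $\|\Phi(f)\|\le \|f\|_{\cS}$, as recorded just before the statement. The key computation is that $\Phi$ carries $\T_{Lie}(\frak T(\mathbb D))$ into inner derivations: a generator $g\ot 1 - 1\ot g$, that is the function $g(\lambda)-g(\mu)$, is sent to $g(L_a)-g(R_a)$. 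For a monomial $g(\lambda)=\lambda^k$ one has $g(L_a)=L_a^k=L_{a^k}$ and likewise $g(R_a)=R_{a^k}$; since $c\mapsto L_c$ and $c\mapsto R_c$ are bounded and linear and polynomials are dense in $\frak T(\mathbb D)$, one gets $g(L_a)=L_{g(a)}$ and $g(R_a)=R_{g(a)}$ for every $g\in\frak T(\mathbb D)$. Thus $\Phi(g\ot 1 - 1\ot g)=L_{g(a)}-R_{g(a)}=\delta_{g(a)}$.

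Finally I would run the invariance argument. Because $\cL$ is a closed Lie submodule, every inner derivation $\delta_c$, $c\in\A$, leaves $\cL$ invariant; in particular so does each $\delta_{g(a)}$. The bounded operators leaving the closed subspace $\cL$ invariant form a norm-closed subalgebra of $\cB(\X)$; it therefore contains the subalgebra generated by all $\delta_{g(a)}$, namely $\Phi(\T_{Lie}(\frak T(\mathbb D)))$, and hence also its closure. Continuity of $\Phi$ gives $\Phi\bigl(\overline{\T_{Lie}(\frak T(\mathbb D))}\bigr)\s\overline{\Phi(\T_{Lie}(\frak T(\mathbb D)))}$, so $f(L_a,R_a)=\Phi(f)$ leaves $\cL$ invariant. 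The only genuinely delicate step is the passage to closures: I must use that $\Phi$ is continuous on the projective tensor product $\cS$ (guaranteed by the estimate $\|f(L_a,R_a)\|\le\|f\|_{\cS}$) and that invariance of a fixed closed subspace is itself a norm-closed condition on operators, so that membership of $f$ in $\overline{\T_{Lie}(\frak T(\mathbb D))}$, rather than in $\T_{Lie}(\frak T(\mathbb D))$ itself, still suffices.
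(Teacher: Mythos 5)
Your proof is correct and is precisely the argument the paper intends: the paper states this corollary without proof as an immediate consequence of Corollary \ref{disk} applied to $B=\frak T(\mathbb D)$ together with the functional-calculus remarks preceding the statement, which is exactly the route you take (identify $f(\lambda,\lambda)=0$ with $f\in\overline{\T_{Lie}(\frak T(\mathbb D))}$, push through the contractive homomorphism $g\ot h\mapsto g(L_a)h(R_a)$ sending generators to inner derivations $\delta_{g(a)}$, and use that the operators preserving the closed subspace $\cL$ form a norm-closed algebra). No gaps.
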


\medskip

Our next aim is to show that for $B = C(K)$ the result of
Corollary \ref{disk} holds without the assumption $K\subset
\mathbb C$.

Let $K$ be an arbitrary compact. The Banach algebra $V(K) =
C(K)\hat{\otimes}C(K)$ is called the Varopoulos algebra of $K$. It
is naturally realized as a regular symmetric function algebra on
$K\times K$. The theory of such algebras and their relations to
various branches of analysis was developed in \cite{Var}.

\begin{theorem}\label{Lie}
The closed subalgebra in $V(K)$, generated by all functions
$f(x)-f(y)$, coincides with the ideal of all functions $F(x,y)$,
vanishing on the diagonal: $$\overline{\T_{Lie}(C(K))} = \{F\in
V(K): F(x,x) = 0 {\rm \;for\;all\;}x\in K\}.$$
\end{theorem}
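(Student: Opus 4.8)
The plan is to prove the two inclusions separately. The inclusion $\ov{\T_{Lie}(C(K))} \s \{F\in V(K): F(x,x)=0\}$ is immediate: every generator $f(x)-f(y)$ vanishes on the diagonal $\Delta = \{(x,x): x\in K\}$, and the set $\J = \{F\in V(K): F|_\Delta = 0\}$ is the kernel of the restriction homomorphism $V(K)\to C(\Delta)$, hence a closed ideal of $V(K)$; being a closed subalgebra containing all the generators, it contains the subalgebra they generate and its closure. For the reverse inclusion $\J\s\ov{\T_{Lie}(C(K))}$ I would argue in two stages: first that every function in $V(K)$ vanishing on a whole neighbourhood of $\Delta$ already lies in $\ov{\T_{Lie}(C(K))}$, and then that such functions are dense in $\J$.

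For the first stage the engine is an explicit algebraic identity. If $p,q\in C(K)$ have disjoint supports, choose by Urysohn's lemma $f\in C(K)$ equal to $1$ on $\supp p$ and to $0$ on $\supp q$, and set $u = p\ot 1 - 1\ot p$, $v = q\ot 1 - 1\ot q$, $w = f\ot 1 - 1\ot f$. Since $pq=0$, $fp=p$ and $fq=0$, a direct computation gives $uv = -(p\ot q + q\ot p)$ and $wuv = -(p\ot q - q\ot p)$, whence
\[
p\ot q = -\tfrac12\,(uv + wuv)\in \T_{Lie}(C(K)).
\]
Now let $G\in V(K)$ vanish on a neighbourhood $N$ of $\Delta$. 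Using compactness of $K$ I would choose a finite open cover $\{U_i\}$ of $K$ so fine that $\ov{U_i}\cap\ov{U_j}\neq\emptyset$ forces $\ov{U_i}\times\ov{U_j}\s N$, together with a subordinate partition of unity $\{\phi_i\}$. Writing $G = \sum_{i,j}(\phi_i\ot\phi_j)G$, every summand with $\ov{U_i}\cap\ov{U_j}\neq\emptyset$ is supported in $N$ and so vanishes, while for the surviving pairs $\ov{U_i}$ and $\ov{U_j}$ are disjoint; expanding $G$ as a norm-convergent series of elementary tensors and multiplying by $\phi_i\ot\phi_j$ exhibits each surviving summand as a $V(K)$-convergent series of elementary tensors with disjoint supports, hence, by the identity above, as an element of $\ov{\T_{Lie}(C(K))}$. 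Thus $G\in\ov{\T_{Lie}(C(K))}$.

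It remains to approximate an arbitrary $F\in\J$ in the norm of $V(K)$ by functions vanishing near $\Delta$. Localizing $F$ by the same partition of unity isolates a near-diagonal remainder $\sum_{\ov{U_i}\cap\ov{U_j}\neq\emptyset}(\phi_i\ot\phi_j)F$, which is small in the uniform norm because $F$ is continuous and vanishes on $\Delta$; but uniform smallness is far weaker than smallness in the projective tensor norm of $V(K)$, and bridging this gap is precisely the assertion that the diagonal is a set of spectral synthesis for the Varopoulos algebra $V(K)$. This is the main obstacle and the genuinely analytic content of the theorem: granting it, $\J$ coincides with the closure of the functions vanishing in a neighbourhood of $\Delta$, and combined with the first stage this gives $\J\s\ov{\T_{Lie}(C(K))}$, hence equality. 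I would establish the synthesis of $\Delta$ from the regularity of $V(K)$ together with a suitable approximate-identity (Ditkin-type) argument, which is exactly where the hypothesis $B=C(K)$, rather than a general function algebra, is used.
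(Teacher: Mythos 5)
Your proof is correct, and its overall architecture --- (a) put elementary tensors $p\ot q$ with disjointly supported factors into $\T_{Lie}(C(K))$ by an algebraic identity, (b) pass to arbitrary functions supported off the diagonal via a partition of unity subordinate to a fine rectangular cover, (c) invoke spectral synthesis of $\Delta$ in $V(K)$ to capture all of $\J=\{F: F|_\Delta=0\}$ --- is exactly that of the paper. The genuinely different ingredient is step (a). The paper reaches it through its Lemma \ref{ident}, which for $ax=xa=0$ produces only $a\ot x^3\in\T_{Lie}$, and must then argue that the ideal $J_1$ of functions supported in a given open set satisfies $J_1=J_1^3$ (using regularity of $C(K)$) and that $J_1^3$ is linearly spanned by cubes (a polarization identity with cube roots of unity). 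Your identity
$$p\ot q=-\tfrac12\bigl(uv+wuv\bigr),\qquad u=p\ot1-1\ot p,\quad v=q\ot1-1\ot q,\quad w=f\ot1-1\ot f,$$
with $f$ a Urysohn function separating the two supports, delivers $p\ot q$ itself in one stroke; I have checked it and it is valid (commutativity of $C(K)$ makes the $B^{op}$ twist harmless). It is cleaner, at the cost of invoking normality of $K$ where the paper invokes regularity of $C(K)$ --- morally the same resource, and both confined to the $C(K)$ setting. Two small caveats. First, for non-metrizable $K$ the existence of a finite cover with $\ov{U_i}\cap\ov{U_j}\neq\emptyset\Rightarrow\ov{U_i}\times\ov{U_j}\s N$ is not a bare compactness statement; one should use the canonical uniformity of the compact space (pick a symmetric entourage $W$ with $W\circ W\s N$ and arrange $\ov{U_i}\times\ov{U_i}\s W$). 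This is routine but deserves a line. Second, the synthesis of the diagonal for the Varopoulos algebra is not something to ``establish'' in passing by a Ditkin-type argument: it is a nontrivial theorem of Varopoulos, and the paper simply cites \cite{Var} for it, as you should as well. With that citation in place your argument is complete and, in step (a), somewhat more economical than the paper's.
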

\begin{proof}
The inclusion $\subset$ is evident; we have to prove $\supset$.

\noindent

Let us fix two non-intersecting open subsets $V_1, V_2$ of $K$. We claim that if $supp(f) \subset V_1$ and
$supp(g)\subset V_2$  then $f(x)g(y)\in {\T_{Lie}(C(K))}$.

To prove the claim, set $J_i = \{f\in C(K): supp(f)\subset V_i\}$, for $i=1,2$. By Lemma \ref{ident},
$g(x)f(y)^3\in {\T_{Lie}(C(K))}$ for any two functions $f,g$ such that $f(x)g(x) = 0$. Hence $f(x)^3g(y)\in
{\T_{Lie}(C(K))}$ for all $f\in J_1$, $g\in J_2$. Furthermore the set $J_1 = \{f\in C(K): supp(f)\subset V_1\}$
is an ideal of $C(K)$. Since $C(K)$ is a regular algebra, $J_1$ coincides with the ideal $J_1^3 $, the linear
span of all products $f_1f_2f_3$, where $f_i\in J_1$ (for each $f\in J_1$ one can find $f_1,f_2\in J_1$ equal
$1$ on $supp(f)$, so $f_1f_2f = f$). On the other hand it is not hard to see that $J_1^3$ is linearly generated
by all functions $f^3$ with $f\in J_1$: it suffices, for each  $f_1,f_2,f_3$, to consider the sum
$$\sum_{i=1}^3(f_1+\omega_if_2 + \omega_i^2f_3)^3$$
where $\omega_i$ are the cubic roots of $1$. This proves our claim.

Let $F(x,y) = \sum_{n=1}^{\infty} a_n(x)b_n(y) \in V(K)$ and $supp(F) \subset K_1\times K_2$ where $K_i$ are
disjoint compacts. We claim that $F\in \overline{\T_{Lie}(C(K))}$. Indeed let $U_i$ $\;(i=1,2)$ be disjoint open
sets containing $K_i$, and $p_i\in C(X)$ be such that ${supp}( p_i) \subset U_i$, $p_i(x) = 1$ for $x\in K_i$.
Then
$$F(x,y) = p_1(x)p_2(y)F(x,y)
= \sum_{n=1}^{\infty} a_n(x)p_1(x)b_n(y)p_2(y).$$ Since $supp(a_n(x)p_1(x))\subset U_1$,
$supp(b_n(x)p_2(x))\subset U_2$, we have, by the above, that $$a_n(x)p_1(x)b_n(y)p_2(y)\in \T_{Lie}(C(K)).$$
Hence $F\in \overline{\T_{Lie}(C(K))}$.

Denote by $\Delta$ the diagonal of $K\times K$: $\Delta = \{(x,x): x\in K\}$. Let $F$ be arbitrary function in
$V(K)$ with $supp F \cap \Delta = \emptyset$. One can choose a finite covering of $supp( F)$ by rectangular open
sets $U_1^n\times U_2^n$ with $\overline{U_1^n\times U_2^n}\cap \Delta = \emptyset$ and $\overline{U_1^n}\cap
\overline{U_2^n} = \emptyset$. Let $\varphi_n$ be the partition of unity corresponding to this covering. Then
each function $F(x,y)\varphi_n(x,y)$ belongs to $\overline{\T_{Lie}(C(K))}$ by the above. Hence $F(x,y) = \sum_n
F(x,y)\varphi_n(x,y) \in \overline{\T_{Lie}(C(K))}$.

Suppose now that $F\in V(K)$ is arbitrary function vanishing on
$\Delta$. Since $\Delta$ is a set of spectral synthesis in $V(K)$
(see \cite{Var}), there is a sequence $(F_n)_{n=1}^{\infty}$ of
elements of $V(K)$ such that $F_n\to F$ (by norm of $V(K)$)  and
$\supp F_n \cap \Delta = \emptyset$. Since all $F_n$ belong to
$\overline{\T_{Lie}(C(K))}$, $F\in \overline{\T_{Lie}(C(K))}$.
\end{proof}

\begin{remark}\label{SD}
 The above proof literally extends to the class (SD) of all regular commutative Banach algebras $B$, such that
the diagonal is the set of spectral synthesis for $B\widehat{\otimes}B$. It was proved in \cite{Sh90} that this
class is quite wide: it contains any regular Banach algebra generated by all its bounded subgroups. In
particular (SD) contains the group algebras  and moreover all regular quotient algebras of measures on locally
convex abelian groups. Thus  we obtain
\begin{corollary} The equality (\ref{banalg}) holds for the group algebras of discrete abelian groups.
\end{corollary}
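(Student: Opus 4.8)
The plan is to exhibit the group algebra of a discrete abelian group as a commutative regular Banach algebra belonging to the class (SD) introduced in Remark \ref{SD}, and then to invoke the extension of Theorem \ref{Lie} to that class. So let $G$ be a discrete abelian group and put $B=\ell^1(G)$. Since $G$ is abelian, $B$ is commutative; its Gelfand spectrum is the compact dual group $\widehat G$, and via the Fourier transform $B$ is realized as a regular function algebra on $\widehat G$ (regularity of the group algebra of a locally compact abelian group being classical). Thus commutativity and regularity hold, and, as noted in the introduction, $\fN_{Lie}(B)$ is the ideal $\{F\in B\widehat{\otimes}B: F(x,x)=0\}$.

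First I would check that $B$ is generated by a bounded subgroup, so that the result of \cite{Sh90} quoted in Remark \ref{SD} applies. The point masses $\{\delta_g: g\in G\}$ are invertible in $B$, with $\delta_g^{-1}=\delta_{g^{-1}}$ and $\|\delta_g\|=1$; hence they form a norm-bounded subgroup of the invertible elements, isomorphic to $G$, whose linear span is dense in $\ell^1(G)$. Therefore $B$ is a regular Banach algebra generated by a bounded subgroup, and by \cite{Sh90} it lies in (SD): the diagonal $\Delta\subset\widehat G\times\widehat G$ is a set of spectral synthesis for $B\widehat{\otimes}B$.

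It then remains to apply verbatim the argument of Theorem \ref{Lie} as extended in Remark \ref{SD}, which yields
\[
\overline{\T_{Lie}(B)}=\{F\in B\widehat{\otimes}B: F(x,x)=0\}=\fN_{Lie}(B),
\]
i.e.\ the equality (\ref{banalg}) (equivalent, for commutative $B$, to (\ref{baneq})). The entire difficulty is concentrated in the spectral synthesis property of the diagonal, which is exactly why it is delegated to \cite{Sh90}; the only new verification demanded of the present corollary is the elementary observation that the point-mass subgroup is bounded and generating.
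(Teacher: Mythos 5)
Your proposal follows exactly the paper's own route: the paper derives this corollary inside Remark \ref{SD} by observing that $\ell^1(G)$ is a regular commutative Banach algebra generated by a bounded subgroup (the point masses), hence lies in the class (SD) by \cite{Sh90}, so the proof of Theorem \ref{Lie} applies verbatim. Your only addition is the explicit (and correct) verification that the point masses form a norm-bounded generating subgroup, which the paper leaves implicit.
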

{\text Note that even for the Wiener-Fourier algebra $WF(\mathbb{T})$ of periodical functions with absolutely
summing Fourier series (the group algebra of $\mathbb{Z}$) the equality (\ref{banalg}) cannot be deduced
directly from purely algebraic results, because $WF(\mathbb{T})$ does not have one generator. It has the dense
subalgebra of trigonometrical polynomials, but this subalgebra does not possess  the property (\ref{mainalg})}.
\end{remark}

\medskip

An example of a Banach algebra for which (\ref{banalg}) fails, can be constructed by modifying one of our
algebraic counterexamples:
\begin{theorem}\label{counter-Ban}
If $B$ is the algebra of absolutely summing Taylor series on
${\mathbb D}^2$ then the equality (\ref{banalg}) is not true.
\end{theorem}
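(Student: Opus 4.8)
The plan is to exhibit a single element witnessing the failure of (\ref{banalg}), reusing the finite-dimensional obstruction already established in the proof of Theorem \ref{counterex}(i). Writing the elements of $B$ as absolutely summing Taylor series in $(z_1,z_2)$, we have that $V_B = B\hat{\otimes}B$ is isometrically the space $\ell^1$ on the Taylor coefficients, realized as absolutely summing Taylor series $F(\vec x,\vec y)$ in the four variables $(x_1,x_2,y_1,y_2)$, with norm equal to the sum of the moduli of the coefficients. The candidate is the polynomial $F(\vec x,\vec y)=(x_1-y_1)x_2$. It lies in $V_B$ and satisfies $F(\vec x,\vec x)=0$, so $F\in\fN_{Lie}(B)$; it remains to show that $F\notin\overline{\T_{Lie}(B)}$.

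The key tool is the projection $P_n$ onto the homogeneous part of total degree $n$ (in all four variables): since $P_n$ merely annihilates the coefficients $c_{\alpha,\beta}$ with $|\alpha|+|\beta|\ne n$, it is a contraction on the $\ell^1$-space $V_B$, hence bounded. First I would check that $\T_{Lie}(B)$ is graded in a way that makes $P_2(\T_{Lie}(B))$ finite-dimensional. Each generator $a(\vec x)-a(\vec y)$ has vanishing constant term, so all of its homogeneous components have degree $\ge 1$; consequently, in a product of generators the degree-$2$ part can arise only from a single generator (contributing its degree-$2$ part $a^{(2)}(\vec x)-a^{(2)}(\vec y)$) or from a product of exactly two generators (contributing the product of their degree-$1$ parts). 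Therefore $P_2$ maps $\T_{Lie}(B)$ into the span $W$ of the six functions $x_1^2-y_1^2$, $x_1x_2-y_1y_2$, $x_2^2-y_2^2$, $(x_1-y_1)^2$, $(x_1-y_1)(x_2-y_2)$, $(x_2-y_2)^2$ --- precisely the functions appearing in (\ref{long}).

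Now I would argue by contradiction. If $F\in\overline{\T_{Lie}(B)}$, choose $G_j\in\T_{Lie}(B)$ with $G_j\to F$. Applying the bounded map $P_2$ and using that $F$ is homogeneous of degree $2$ (so $P_2F=F$), we obtain $P_2 G_j\to F$ with $P_2 G_j\in W$. Since $W$ is finite-dimensional, hence closed, this forces $F\in W$. But the computation in the proof of Theorem \ref{counterex}(i) --- starting from the representation (\ref{long}) and specializing first $x_1=y_1$ and then $x_2=y_2$ --- shows exactly that $(x_1-y_1)x_2$ is not a linear combination of those six functions, i.e. $F\notin W$. This contradiction yields $F\notin\overline{\T_{Lie}(B)}$, whence $\overline{\T_{Lie}(B)}\neq\fN_{Lie}(B)$ and (\ref{banalg}) fails.

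The only genuinely delicate point is the grading step: one must be certain that passage to the norm closure cannot manufacture a degree-$2$ element lying outside $W$. This is exactly what the continuity of $P_2$ together with the closedness of the finite-dimensional space $W$ guarantees, and it is what allows the purely algebraic obstruction of Section 2 to survive completion. Everything else is routine bookkeeping with homogeneous components.
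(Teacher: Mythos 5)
Your proof is correct and follows essentially the same route as the paper's: both pass a hypothetical approximating sequence from $\T_{Lie}(B)$ through a continuous coefficient projection onto a finite-dimensional space and then invoke the six-term computation (\ref{long}) from the proof of Theorem \ref{counterex}(i). The only difference is cosmetic --- you project onto the homogeneous degree-$2$ component, while the paper uses $Q=P\otimes P$ with $P$ the truncation to degree $\le 2$; your choice makes the grading bookkeeping a bit cleaner, but the finite-dimensional obstruction reached is identical.
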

\begin{proof}
An element of $B$ is a function of the form $f(\vec x) =
\sum_{n=0}^{\infty} f^{(n)}$, where $\vec x = (x_1,x_2)$,
$f^{(n)}(\vec x)$ is the uniform component of degree $n$:
$f^{(n)}(\vec x) = \sum_{i+j=n}a_{ij}x_1^ix_2^j$, and
$\sum_{i,j=1}^{\infty}|a_{ij}|<\infty$. Denote by $P$ the projection
onto the space of polynomials of degree $\le 2$: $Pf =
f^{(0)}+f^{(1)} + f^{(2)}$. It is continuous on $B$; as a
consequence the projection $Q = P\otimes P$ is continuous on
$B\widehat{\otimes}B$.

It is clear that the polynomial $p(\vec x, \vec y) = (x_1-y_1)x_2$
(or in tensor form $x_1x_2\otimes 1 - x_2\otimes x_1$) belongs to
${\N_{Lie}(B)}$. We claim that it does not belong to
$\overline{\T_{Lie}(B)}$.

Suppose the contrary: $p = \lim_{k\to \infty} F_k$ with all $F_k$ in
${\T_{Lie}(B)}$. This means that each $F_k$ is a sum of products of
functions of the form $a(\vec x)\otimes 1 - 1\otimes a(\vec x)$. Non
restricting generality we may consider only the case that $a(\vec x)
= x_1^ix_2^j$.

Since $Q$ is continuous, $p = \lim QF_k$. But $QF_k \neq  0$ only
if $F_k$ has a summand of the form
\begin{multline*}\lambda_1(x_1-y_1)^2 + \lambda_2(x_1-y_1)(x_2-y_2) +\\
\lambda_3(x_2-y_2)^2 + \lambda_4(x_1^2-y_1^2)+
\lambda_5(x_1x_2-y_1y_2)
 +\lambda_6(x_2^2-y_2^2) + g_1 + g_3+g_4,\end{multline*} where $g_i$ are
 polynomials of order $i$,
 and in this case $QF_k$ is of this form.
 Hence we obtain that $p $ is a limit of such functions. Since the
 space of these functions is finite-dimensional, $p$ belongs to it.
 Hence \begin{multline*}p = \lambda_1(x_1-y_1)^2 +
\lambda_2(x_1-y_1)(x_2-y_2) +\\ \lambda_3(x_2-y_2)^2 +
\lambda_4(x_1^2-y_1^2)+ \lambda_5(x_1x_2-y_1y_2)
 + \lambda_6(x_2^2-y_2^2).\end{multline*} As the proof of Theorem \ref{counterex}
 shows, this is impossible.
\end{proof}

\mm

\section{Algebras of compact operators}

\medskip

 Let us now consider the algebra $\K(\fX)$ of all compact
operators on a Banach space $\fX$. We assume that $\fX$ has the
approximation property --- the compact operators are norm-limits
of finite-rank operators. The next result shows that the equality
(\ref{baneq}) "almost holds" for $\widetilde{\K(\fX)}$.

\begin{theorem}\label{tensK(X)}
 If $B = \widetilde{\K(\fX)}$ then $\N_{Lie}(B)^2
\subseteq \overline{\T_{Lie}(B)}$.
\end{theorem}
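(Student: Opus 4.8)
The plan is to reduce everything to the finite-rank algebra and to exploit the defining structure of $\N_{Lie}(B)^2$ as a product, using the identity from Lemma \ref{ident} as the key algebraic engine. First I would fix $B = \widetilde{\K(\fX)}$ and recall, by the approximation property of $\fX$, that finite-rank operators are norm-dense in $\K(\fX)$; hence $\T_{Lie}(\widetilde{F(\fX)})$ is dense in $\T_{Lie}(B)$ after closure, and it suffices to approximate elements of $\N_{Lie}(B)^2$ by elements of $\T_{Lie}(B)$ in the projective norm of $V_B = B\hat\otimes B^{op}$. A general element of $\N_{Lie}(B)^2$ is a finite sum of products $ST$ with $S,T\in\N_{Lie}(B)$; so I would first analyze a single such product and later sum and pass to limits.

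The main technical step is to truncate. For $S,T\in\N_{Lie}(B)$, each is a norm-convergent series $\sum a_i\ot b_i$ with $\sum a_ib_i=\sum b_ia_i=0$; by absolute convergence I can approximate $S$ and $T$ in norm by \emph{finite} tensors $S',T'$ whose legs are finite-rank, at the cost of replacing the exact conditions (\ref{dir}), (\ref{conver}) by small errors. The product $S'T'$ then differs from $ST$ by a norm-small amount, so it is enough to show that such finite products, corrected to lie genuinely in $\N_{Lie}(\widetilde{F(\fX)})$, belong to $\overline{\T_{Lie}(B)}$. Once the legs are finite rank, I can choose a single finite-rank projection $p$ with $p\xi=\xi$ for all relevant legs $\xi$, so that every leg lives in $\cL(Y)$ with $Y=p\fX$ finite-dimensional. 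The crucial observation is then that $\N_{Lie}(\widetilde{F(\fX)})^2$, restricted through this compression, lands inside $\N_{Lie}(\cL(Y))^2$, and by Theorem \ref{tanya} (applied to $M_n=\cL(Y)$) we have $\N_{Lie}(\cL(Y))=\T_{Lie}(\cL(Y))$, so certainly $\N_{Lie}(\cL(Y))^2\subseteq\T_{Lie}(\cL(Y))$. The natural embedding $\cL(Y)\hookrightarrow\cL(\fX)$ carries $\T_{Lie}(\cL(Y))$ into $\T_{Lie}(F(\fX))\subseteq\T_{Lie}(B)$, exactly as in the proof of Theorem \ref{Ftens}.

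The hard part will be the truncation bookkeeping: one must see that when $S,T$ are replaced by finite-rank truncations, the \emph{product} $S'T'$ can be adjusted by a norm-small correction term so as to satisfy (\ref{dir}) and (\ref{conver}) exactly, and that this correction itself lies in (or close to) $\T_{Lie}(B)$. Here I expect to use Lemma \ref{semiideals}, which identifies $\N_{Lie}(B)$ with the intersection of the one-sided ideals generated by $\T_{Lie}(B)$, together with the inclusion (\ref{mulby1}); this lets me write the defect of a truncated product as an element of $\N_{Lie}$ with small norm and absorb it. The reason the square $\N_{Lie}(B)^2$ appears, rather than $\N_{Lie}(B)$ itself, is precisely that a single product $ST$ carries two factors from $\T_{Lie}$-generated ideals, which (via the finite-dimensional reduction through Theorem \ref{tanya}) is what forces membership in $\overline{\T_{Lie}(B)}$; a single factor would not suffice because $\K(\fX)$ is nonunital in spirit and the unitization constant must be killed, which a product of two $\N_{Lie}$-elements does automatically since each already satisfies $\sum a_ib_i=0$. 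I would finish by summing over the finitely many products, choosing the truncations uniformly, and letting the truncation error tend to zero to conclude $\N_{Lie}(B)^2\subseteq\overline{\T_{Lie}(B)}$.
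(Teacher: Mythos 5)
There is a genuine gap, and it sits exactly where you flag ``the hard part'': the truncation bookkeeping. You truncate $S$ and $T$ separately, so the finite-rank approximants $S',T'$ no longer satisfy (\ref{dir}) and (\ref{conver}), and you then need to correct $S'T'$ by a norm-small term so that the corrected element lies in $\N_{Lie}(\widetilde{F(\fX)})$. You give no construction of such a correction, and the natural candidates do not obviously work: a single term $u\ot 1$ can repair one of the two trace conditions but not both simultaneously in a noncommutative algebra. Worse, your fallback --- that the correction ``lies in (or close to) $\T_{Lie}(B)$'' --- would, for an arbitrary small element of $\N_{Lie}(B)$, amount to asserting that $\N_{Lie}(B)\subseteq\overline{\T_{Lie}(B)}$, which is precisely what the theorem does \emph{not} claim (only the square is controlled). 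A small defect in $\N_{Lie}$ need not be close to $\T_{Lie}$ unless it again has a product structure, and you have not exhibited one. (A minor point: in this paper $\N_{Lie}(B)$ consists of \emph{finite} tensors --- the norm-convergent series version is $\fN_{Lie}(B)$ --- so your series truncation is vacuous; the real issue is the finite-rank approximation of the legs.)

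You actually state the key insight yourself --- ``a single product $ST$ carries two factors from $\T_{Lie}$-generated ideals'' --- but you do not exploit it, and doing so is how the paper kills the bookkeeping entirely. By Lemma \ref{semiideals} one has the \emph{exact} algebraic inclusion $\N_{Lie}(B)^2\subseteq\T_{Lie}(B)\,(B\ot B^{op})\,\T_{Lie}(B)$, so it suffices to treat elements of the form $(a\ot 1-1\ot a)(y\ot z)(b\ot 1-1\ot b)$. This sandwich form is stable under replacing $a,b,y,z$ by finite-rank approximants: the approximants automatically lie in $\T_{Lie}(\widetilde{F(\fX)})\,(\widetilde{F(\fX)}\ot\widetilde{F(\fX)}^{op})\,\T_{Lie}(\widetilde{F(\fX)})\subseteq\N_{Lie}(\widetilde{F(\fX)})$ by (\ref{mulby1}), which equals $\T_{Lie}(\widetilde{F(\fX)})$ by Theorem \ref{Ftens}, and one passes to the limit. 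No condition ever needs to be repaired because membership in $\N_{Lie}$ of the approximants is structural, not numerical. If you reorganize your argument to approximate the sandwich representation of $ST$ rather than $S$ and $T$ individually, your remaining ingredients (Theorem \ref{tanya} via Theorem \ref{Ftens}, the embedding $\cL(Y)\hookrightarrow\cL(\fX)$) carry the proof through.
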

\begin{proof}
Let $x,y\in \N_{Lie}(B)$. By Lemma \ref{semiideals}, $x =
\sum_{i=1}^n d_i x_i$, $y = \sum_{j=1}^m x_j'd_j'$ for some $d_i,
d_j'\in \T_{Lie}(B)$, $x_i, x_j'\in B\otimes B^{op}$. Hence $xy =
\sum_{i, j} d_ix_ix_j'd_j' \in \T_{Lie}(B) (B\otimes B^{op})
\T_{Lie}(B)$ and we get
\begin{equation}\label{part}
\N_{Lie}(B)^2\subseteq \T_{Lie}(B) (B\otimes B^{op}) \T_{Lie}(B).
\end{equation}

Let $a, b,  y, z\in B$, $a_n,b_n,y_n,z_n \in \widetilde{F(\fX)}$
with $a_n\to a$, $b_n\to b$, $y_n\to y$, $z_n\to z$, where as usual
$F(\fX)$ is the algebra of all finite-rank operators on $\fX$.  Then
$$(a\ot 1-1\ot a)(y\ot z)(b\ot 1 - 1\ot b) = \lim (a_n\ot 1-1\ot
a_n)(y_n\ot z_n)(b_n\ot 1 - 1\ot b_n)$$ and $(a_n\ot 1-1\ot
a_n)(y_n\ot z_n)(b_n\ot 1 - 1\ot b_n)\in
\T_{Lie}(\widetilde{F(\fX)}) \widetilde{F(\fX)}\otimes
{\widetilde{F(\fX)}}^{op}
 \T_{Lie}(\widetilde{F(\fX)})$. By Lemma
\ref{semiideals} $\T_{Lie}(\widetilde{F(\fX)})
\widetilde{F(\fX)}\otimes {\widetilde{F(\fX)}}^{op}
\T_{Lie}(\widetilde{F(\fX)})\subseteq \N_{Lie}(\widetilde{F(\fX)})$
which in its turn coincides with $\T_{Lie}(\widetilde{F(\fX)})$ by
Theorem \ref{Ftens}. Thus $(a\ot 1-1\ot a)(y\ot z)(b\ot 1 - 1\ot
b)\in \overline{\T_{Lie}(\widetilde{F(\fX)})} \subseteq
\overline{\T_{Lie}(B)}$, for any $a, b,  y, z\in B$, whence
$\T_{Lie}(B)B\otimes B^{op} \T_{Lie}(B) \subseteq
\overline{\T_{Lie}(B)}$. By (\ref{part}) $$\N_{Lie}(B)^2 \subseteq
\overline{\T_{Lie}(B)}.$$
\end{proof}

\medskip


Our next goal is to establish the equality
\begin{equation}\label{banel-1}
\overline{\D_{Lie}(B)} = \overline{\M_{Lie}(B)}
\end{equation}
 for $B = \K(H)$, the
algebra of all compact operators on a Hilbert space $H$.

It follows easily from the Lomonosov's Theorem (see \cite{Lom})
that in a reflexive Banach space $\fX$ with the approximation
property each transitive algebra of compact operators is norm
dense in the algebra $\K(\fX)$ of all compact operators. We need
the following extension of this result.

\begin{lemma}\label{comp}  Let $\fX$ be a Banach space with the approximation
property, $W\subset \K(\fX)$ a closed algebra without (closed)
invariant subspaces, $Y$ a closed complemented subspace of $ \fX^*$.
Suppose that the following conditions are fulfilled:

(i) $W^*\fX^*\subset Y$,

 (ii) There is no proper non-zero closed
 subspace of $Y$  invariant for $W^*$.

 Then $W = \{T\in \K(\fX): T^*\fX^*\subset Y\}$.
\end{lemma}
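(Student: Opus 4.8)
The plan is to prove both inclusions, writing $\cQ=\{T\in\K(\fX):T^*\fX^*\subseteq Y\}$ for the algebra on the right-hand side and denoting by $x\otimes f$ the rank-one operator $\xi\mapsto f(\xi)\,x$, so that $(x\otimes f)^*$ has range $\mathbb C f$. The inclusion $W\subseteq\cQ$ is precisely hypothesis (i). For the identification of $\cQ$ I would fix a bounded projection $P$ of $\fX^*$ onto $Y$ (available since $Y$ is complemented) and invoke the approximation property: given $T\in\cQ$, write $T=\lim T_n$ with $T_n=\sum_i x_i^{(n)}\otimes f_i^{(n)}$ finite-rank; then $S_n:=\sum_i x_i^{(n)}\otimes Pf_i^{(n)}$ satisfies $S_n^*=PT_n^*\to PT^*=T^*$ (because $PT^*=T^*$ for $T\in\cQ$), hence $S_n\to T$ while each $S_n$ lies in $\fX\otimes Y$. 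Thus $\cQ=\overline{\fX\otimes Y}$, the closed linear span of the rank-one operators $u\otimes v$ with $u\in\fX$, $v\in Y$. Since $W$ is closed, the whole problem reduces to showing that every such $u\otimes v$ lies in $W$.

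The heart of the argument is to produce a single nonzero rank-one operator in $W$, and this is where Lomonosov's machinery enters. As $W$ is transitive it is nonzero, so pick $0\neq K\in W$. Lomonosov's Lemma (\cite{Lom}), applied to the transitive algebra $W$ and the nonzero compact operator $K$, yields $A\in W$ with $\ker(I-AK)\neq0$; set $B=AK\in W$. Then $B$ is compact with $1$ an isolated point of $\sigma(B)$, so the Riesz spectral projection $P_1$ at $1$ is a nonzero finite-rank operator. Writing $P_1=h(B)$ for a function $h$ holomorphic near $\sigma(B)$ with $h\equiv1$ near $1$ and $h\equiv0$ near the rest of $\sigma(B)$ (in particular $h(0)=0$), I would approximate $h$ by polynomials $p_n$ with $p_n(0)=0$; then $p_n(B)\in W$ and $p_n(B)\to P_1$, so $P_1\in W$ since $W$ is closed. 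Thus $W$ contains a nonzero finite-rank operator, and by the classical minimal-rank argument for transitive algebras (a minimal-rank element $F$ makes the corner $FWF$, read on the range of $F$, a finite-dimensional division algebra over $\mathbb C$, hence $\mathbb C$, forcing $\operatorname{rank}F=1$) $W$ contains a rank-one operator $a\otimes\phi$. Since $a\otimes\phi\in W\subseteq\cQ$, its adjoint has range $\mathbb C\phi\subseteq Y$, so $\phi\in Y$, and both $a,\phi$ are nonzero.

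Finally I would bootstrap from this one rank-one operator using the two transitivity hypotheses. For $T,S\in W$ one has $T\,(a\otimes\phi)\,S=(Ta)\otimes(S^*\phi)\in W$. The closed $W$-invariant subspace $\overline{Wa}$ is nonzero, for otherwise $\mathbb C a$ would be a nontrivial closed invariant subspace of $W$; hence $\overline{Wa}=\fX$ by transitivity. Likewise $W^*$ leaves $Y$ invariant (by (i), $W^*\fX^*\subseteq Y\subseteq\fX^*$) and has no proper nonzero closed invariant subspace of $Y$ by (ii), so the same reasoning gives $\overline{W^*\phi}=Y$. Therefore $\{Ta:T\in W\}$ is dense in $\fX$ and $\{S^*\phi:S\in W\}$ is dense in $Y$, and a routine norm estimate shows that $(Ta)\otimes(S^*\phi)$ approximates an arbitrary $u\otimes v$ with $u\in\fX$, $v\in Y$. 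Hence every such $u\otimes v$ belongs to $W$, and since $W$ is a closed subspace, $\cQ=\overline{\fX\otimes Y}\subseteq W$, giving $W=\cQ$. I expect the main obstacle to be the middle step, namely extracting a genuine rank-one operator from $W$ by combining Lomonosov's Lemma, the Riesz holomorphic functional calculus inside the possibly non-unital closed algebra $W$, and the minimal-rank reduction; the outer steps, the identification of $\cQ$ and the bilinear bootstrap, are comparatively routine once a rank-one operator is in hand.
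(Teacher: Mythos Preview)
Your argument is correct in outline and follows the same three-part strategy as the paper: extract a rank-one operator via Lomonosov, bootstrap to all of $\fX\otimes Y$, and identify $\cQ$ with $\overline{\fX\otimes Y}$ using the projection $P$. Your identification of $\cQ$ via $S_n^*=PT_n^*\to PT^*=T^*$ is exactly the paper's map $T\mapsto\hat T$ (defined there by $j\hat T=T^{**}P^*j$), only stated more directly; and your two-sided bootstrap $T(a\otimes\phi)S=(Ta)\otimes(S^*\phi)$ replaces the paper's factorization $x\otimes f=\tfrac{1}{f_0(x_0)}(x\otimes f_0)(x_0\otimes f)$, which is why you never need the rank-one operator to have nonzero trace.

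There is one genuine, though easily repaired, gap in the middle step. Your claim that for a minimal-rank $F\in W$ the corner $FWF$, read on $V=\operatorname{range}F$, is a division algebra tacitly assumes that $F|_V$ is invertible. If $F^2=0$ (minimality does not exclude this) then $F(V)=0$, so $FAF|_V=0$ for every $A\in W$ and the corner is the zero algebra; nothing then forces $\operatorname{rank}F=1$. The paper sidesteps this by working with the Riesz projection $P_1$ itself: since $P_1$ is idempotent, $P_1WP_1|_{P_1\fX}$ is a genuine subalgebra of $L(P_1\fX)$, it is transitive because $W$ is (for $0\neq y\in P_1\fX$ one has $P_1WP_1y=P_1Wy=P_1\fX$), and Burnside's theorem gives $P_1WP_1|_{P_1\fX}=L(P_1\fX)$, hence a rank-one operator in $W$. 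You already have $P_1\in W$, so this repair costs nothing.
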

\begin{proof} Prove first of all that $W$ contains a rank
one operator with nonzero trace. By Lomonosov's Lemma (see
\cite{Lom}) there is such operator $T\in W$ that $\sigma(T) \neq
\{0\}$. Let $0\neq \lambda \in \sigma(T)$ and $P$ be the
corresponding spectral projection. Then $P$ is finite-dimensional
and belongs to $W$. Let $W_0 = PWP|_{P\fX}$. Since this algebra
has no invariant subspaces it coincides with $B(P\fX)$ by
Burnside's Theorem. Hence it contains rank one operator $T$ such
that $tr T \neq 0$. Since we can identify $W_0$ with a subalgebra
of $W$ we have $T\in W$. Writing $T = x_0\otimes f_0$ where
$x_0\in \fX$, $f_0\in \fX^*$, we have that $0\neq tr T =
f_0(x_0)$. Since $Im\; T^* = \mathbb C f_0$ we have $f_0\in Y$.

Consider the set $\{x\in \fX: x\otimes f_0 \in W\}\subset \fX$. It
is a closed
 invariant subspace for $W$ and hence it coincides with $\fX$.
 Similarly the set $\{f\in \fX^*: x_0\otimes f \in W\}\subset \fX^*$ is closed
 invariant for $W^*$ subspace of $Y$ and hence it coincides with $Y$.
 So for any $x\in \fX$, $f\in Y$ we have $x\otimes f_0\in W$,
 $x_0\otimes f\in W$ whence $x\otimes f = \frac{1}{f_0(x_0)}
 (x\otimes f_0)(x_0\otimes f) \in W$.  Hence $W$ contains any
finite rank operator $T$ such that $T^*\fX^*\subset Y$. If we
denote by $W_1$ the algebra of all compact operators $T$ such that
$T^*\fX^*\subset Y$, then it can be said that $W_1\cap
\mathcal{F}(H) \subset W$. So to show that $W = W_1$ we have only
to establish that $W_1\cap \mathcal{F}(H)$ is norm dense in $W_1$.

 Let $P:\fX^*\to \fX^*$ be a projection on $Y$ (it exists because $Y
\subset \fX^*$ is  assumed to be complemented). Denote by $j:
\fX\to \fX^{**}$ the canonical  inclusion. We claim that for any
compact operator $T\in \K(\fX)$, the subspace $j(\fX)$ of
$\fX^{**}$ is invariant for the operator $T^{**}P^*$. Indeed for
any finite rank operator $T = \sum_{i=1}^N x_i\otimes f_i$, we
have $T^* = \sum_{i=1}^N
 f_i \otimes j(x_i)$ and for any $z\in \fX$, $g\in \fX^*$
 \begin{multline*}\left(T^{**}P^*j(z)\right)(g) = (P^*j(z))(T^*g) =
(P^*j(z))\left(\sum_{i=1}^N g(x_i)f_i\right) =\\ \sum_{i=1}^N
g(x_i)(Pf_i)(z) =
 j\left(\sum_{i=1}^N(Pf_i)(z)x_i\right)(g)\end{multline*}
   whence $$T^{**}P^*j(z)=
 j\left(\sum_{i=1}^N(Pf_i)(z)x_i\right)$$ so that $j(\fX)$ is invariant for
 $T^{**}P^*$. Let   $T = lim_{n\to \infty} T_n$, where $T_n$ are of finite
 rank. Then $\|T^{**}P^* - T_n^{**}P^*\| \le \|T^{**} - T_n^{**}\|
 \to 0$ and since the closed subspace $j(\fX)$ is invariant for all
 $T_n^{**}P^*$ we get that it is invariant for $T^{**}P^*$.

 Now we can define for each $T\in \K(\fX)$, an operator $\hat{T}\in \K(\fX)$ by the equality
\begin{equation}\label{intertw}
 j\hat{T} = T^{**}P^*j.
\end{equation}
It is easy to see from (\ref{intertw}) that the map $T\to \hat{T}$
is linear and continuous: $\|\hat{T}\|\le \|P\|\|T\|$.

If $T=x\otimes f$ then $\hat{T} = x\otimes Pf$ while
$(\hat{T})^*(Y^*) \subset \mathbb{C}Pf \subset Y$. Thus
$\hat{T}\in W_1$ for any rank one operator $T$. By linearity and
continuity of the map $T\to \hat{T}$ we conclude that this is true
for all $T\in \K(\fX)$.

\noindent Now let us show that $\hat T = T$ for each $T\in W_1$.
For any $x\in \fX$, $g\in X^*$, we have $g(\hat Tx) =
(T^{**}P^*j(x))(g) = j(x)(PT^*g) = (PT^*g)(x)$ and since $T^*g\in
Y$ we have $PT^*g = T^* g$ whence $g(\hat T x) = (T^*g)(x) =
g(Tx)$. Thus we get $\hat T = T$.

Now we can finish the proof. Let $K\in W_1$. Since $\fX$ has the
approximation property, one can choose finite rank operators $K_n$
such that $K_n\to K$.  Since  the map $T\mapsto \hat T$ is
continuous we have $\hat K_n\to \hat K$. Then $K = \hat K = \lim
\hat K_n$. But $\hat K_n\in W_1\cap \mathcal{F}(\fX) \subset W$.
Since $W$ is closed we get $K\in W$.

\end{proof}

\m

Let $B = \K(H)$, the algebra of all compact operators on a Hilbert
space $H$. We will apply Lemma \ref{comp} in the case $\fX = B$. The
dual Banach space $B^*$ of $B$ is $C_1(H)$, the ideal of all trace
class operators. We denote by $C_1^0(H)$ the subspace in $C_1(H)$
consisting of all operators with zero trace.

\begin{corollary}
$\overline{\D_{Lie}(B)}\bigcap \K(B) = \{T\in \K(B): T^*B^*\subset
C_1^0(H)\}$.
\end{corollary}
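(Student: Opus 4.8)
The plan is to apply Lemma~\ref{comp} with $\fX = B = \K(H)$, the closed algebra $W = \overline{\D_{Lie}(B)}\cap \K(B)$, and the complemented subspace $Y = C_1^0(H)\subset C_1(H) = B^*$. To make this work I must verify the three hypotheses of the lemma. First, the approximation property of $\K(H)$ (as a Banach space) is standard. Second, I need $W^*\fX^*\subset Y$, i.e. that $T^*$ maps $C_1(H)$ into $C_1^0(H)$ for each $T\in W$. Since $W$ is generated (as a closed algebra) by the inner derivations $\delta_a = L_a - R_a$, it suffices to check this on a dense set; the key computation is that for $\rho\in C_1(H)$ and $a\in B$, the functional $\delta_a^*\rho$ acts by $x\mapsto \tr(\rho\,\delta_a(x)) = \tr(\rho(ax-xa)) = \tr((\rho a - a\rho)x)$, so $\delta_a^*\rho = \rho a - a\rho$, a commutator, which has zero trace. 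Thus $\delta_a^*$ maps all of $C_1(H)$ into $C_1^0(H)$, and the same holds for products and limits, giving $W^*B^*\subset C_1^0(H)$.

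The third and main hypothesis requires that $W$ have no proper nonzero closed invariant subspace and that $W^*$ have no such subspace in $Y = C_1^0(H)$. Condition (ii) of the lemma is the crucial one: I must show there is no proper nonzero closed $W^*$-invariant subspace of $C_1^0(H)$. The natural approach is to transport this to a statement about Lie ideals: a $W^*$-invariant subspace of $C_1^0(H)$ corresponds, via the duality and the commutator formula above, to a closed subspace $\cL\subset C_1^0(H)$ satisfying $\rho a - a\rho\in \cL$ for all $a\in B$ and $\rho\in\cL$ — that is, a closed Lie ideal of the algebra $\K(H)$ (or of a related algebra) lying inside the trace-zero operators. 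I would then invoke the known classification of closed Lie ideals of $\K(H)$: the only candidates are $0$, $C_1^0(H)$ itself, and the scalars; intersecting the nontrivial proper options with $C_1^0(H)$ leaves only $0$ and the whole space, so no proper nonzero invariant subspace survives.

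The transitivity of $W$ itself (the ``no invariant subspace'' hypothesis on $W$, not $W^*$) should follow by a dual version of the same Lie-ideal argument, or more directly from the fact that $\overline{\D_{Lie}(B)}$ acts on $B$ with the Hadamard-type irreducibility inherited from the finite-dimensional Burnside analysis in Theorem~\ref{tanya} and Corollary~\ref{standard}, pushed to the compact setting. Once all three hypotheses are confirmed, Lemma~\ref{comp} yields directly
\[
W = \{T\in \K(B): T^*B^*\subset C_1^0(H)\},
\]
which is exactly the asserted identity $\overline{\D_{Lie}(B)}\cap\K(B) = \{T\in\K(B): T^*B^*\subset C_1^0(H)\}$.

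The hard part will be establishing hypothesis (ii) rigorously: one must argue carefully that a closed $W^*$-invariant subspace of $C_1^0(H)$ is genuinely a closed Lie ideal to which the classification applies, handling the passage from the generating derivations $\delta_a^*$ to the full algebra $W^*$ and confirming that the weak-* versus norm topologies do not introduce spurious invariant subspaces. The duality bookkeeping between the commutator action on $C_1(H)$ and the elementary-operator action on $B$, together with invoking the precise form of the closed-Lie-ideal theorem for $\K(H)$, is where the real work lies; the application of Lemma~\ref{comp} itself is then formal.
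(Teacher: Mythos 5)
Your framework coincides with the paper's: apply Lemma \ref{comp} with $\fX = B = \K(H)$, $W = \overline{\D_{Lie}(B)}\cap\K(B)$ and $Y = C_1^0(H)$, and your verification of condition (i) via $(L_a-R_a)^*y = [y,a]$ is exactly the paper's computation. But condition (ii) is the entire substance of the corollary, and your treatment of it has a genuine gap. A $W^*$-invariant closed subspace $Y_0\subset C_1^0(H)$ is a trace-norm closed subspace of $C_1(H)$ stable under $\rho\mapsto[\rho,a]$ for $a\in\K(H)$; this is \emph{not} a closed Lie ideal of $\K(H)$ (it is not an operator-norm closed subspace of $\K(H)$, and neither $C_1^0(H)$ nor ``the scalars'' is a closed Lie ideal of $\K(H)$ at all), so the classification you propose to invoke does not apply to the object at hand. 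The theorem that actually does the work is Miers' classification \cite{Mi} of \emph{weak-$*$ closed Lie ideals of $B(H)$} (namely $0$, $\mathbb{C}1$, $B(H)$), and to reach it one must pass to the annihilator $Z = Y_0^{\perp}\subset C_1(H)^* = B(H)$ and show that $Z$ is a Lie ideal of $B(H)$, i.e.\ invariant under commutators with \emph{all} bounded operators, not merely the compact ones coming from $\D_{Lie}(B)$.

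That last step is where the real work lies, and it is absent from your sketch. The paper first proves the algebraic identity $L_pR_q = \tfrac12(T+S)\in\D_{Lie}(B)$ for finite-rank projections $p,q$ with $pq=0$, where $T = -(L_p-R_p)(L_q-R_q)$ and $S=(L_p-R_p)T$; it then observes that $Z$ is invariant under the second adjoints of these multiplication operators, so $pZq\subset Z$, extends this to arbitrary orthogonal projections by strong limits and weak-$*$ closedness of $Z$, deduces $pz-zp\in Z$ for every projection $p$ and hence $[B(H),Z]\subset Z$, and only then applies Miers' theorem to get $Z=\mathbb{C}1$ or $Z=B(H)$, i.e.\ $Y_0=Y$ or $Y_0=0$. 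You correctly sense that a duality argument and a Lie-ideal classification are needed, but you name the wrong classification and leave the passage from ``invariant under $W^*$'' to ``weak-$*$ closed Lie ideal of $B(H)$'' --- including the $L_pR_q$ identity and the bidual/weak-$*$ limit bookkeeping --- entirely unexecuted, while explicitly acknowledging that this is the hard part. As it stands the proposal is a plan for condition (ii), not a proof of it. (You also flag the transitivity of $W$ on $B$ as a hypothesis to be checked; the paper's own proof is silent on that point, so I do not count it against you.)
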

\begin{proof} Note first of all that $B = \K(H)$ has the approximation property.
Indeed if $P_n$ is a sequence of projections increasing to $1_H$,
then the compact maps $T\to P_nTP_n$ strongly tend to $1_{\K(H)}$.

Set $W = \overline{\D_{Lie}(B)}\bigcap \K(B)$, $Y = C_1^0(H)$.
Obviously $Y$ has codimension 1 in $C_1(H)$, so it is complemented.
For any $a\in \A, y\in C_1(H) , x\in \B$, we have
$$\left((L_a-R_a)^*(y)\right)(x) = y((L_a-R_a)(x)) = y([a,x]) = \text{tr }y[a, x] =
- \text{tr }[a, y]x = ([y, a])(x),$$ so $(L_a-R_a)^*(y)= [y, a]$ and
we see that $(L_a-R_a)^*$ maps $C_1(H)$ to $Y$. It follows that all
operators in $\overline{\D_{Lie}}(B)^*$ map $C_1(H)$ to $Y$. Hence
$W^*C_1(H)\s Y$. We proved that the condition (i) of Lemma
\ref{comp} is fulfilled.

To establish the condition (ii) let us prove firstly that for each
pair $p,q$ of finite rank projections  with $pq = 0$, the operator
$L_pR_q$ belongs to $W$. It is clear that this operator is compact
so we have to show only that it belongs to $\D_{Lie}(B)$. The
operator $T = L_pR_q+L_qR_p = -(L_p-R_p)(L_q-R_q)$ belongs to
$\D_{Lie}(B)$, hence $S = (L_p-R_p)T = L_pR_q - L_qR_p$ also belongs
to $\D_{Lie}(B)$. It follows that $L_pR_q = (T+S)/2\in \D_{Lie}(B)$.

Let $Y_0\s Y$ be a closed subspace invariant for $W^*$. Denote by
$Z$ the annihilator of $Y_0$ in $C_1(H)^* = B(H)$. This subspace is
invariant with respect to all operators $T^{**}: T\in W$. It is not
difficult to see that the second adjoint of a multiplication
operator on $K(H)$ is the "same" multiplication operator on $B(H)$.
Thus $pZq \s Z$ if $p,q$ are finite-rank projections with $pq = 0$.

Suppose now that $q$ is an arbitrary projection satisfying the
condition $pq = 0$. Then there is a sequence  of finite-rank
projections $q_n \le q$ which tends to $q$ in strong operator
topology. Since $pZq_n \s Z$ and $Z$ is *-weakly closed we obtain
that $pZq = 0$. Dealing in the same way with $p$ we conclude that
$pZq$ for any projections $p,q$ satisfying the condition $pq = 0$.
In particular $pz(1-p)\in Z$ and $(1-p)zp\in Z$ for each projection
$p$ and each $z\in Z$. Subtracting we get that $pz-zp\in Z$.  Using
the fact that $B(H)$ is linearly generated by projections (or just
Spectral Theorem and the closeness of $Z$) we obtain that $az-za\in
Z$ for all $a\in B(H)$ and $z\in Z$. Thus  $Z$ is a Lie ideal of
$B(H)$. Since $Z$ is *-weakly closed and non-zero we have, by
\cite{Mi} (see also \cite{Mu}, \cite{FM}), that $Z = B(H)$ or $Z =
\mathbb{C}1$. Thus $Y_0 = 0$ or $Y_0 = Y$.

 We proved that the condition (ii) of Lemma \ref{comp} is fulfilled. Now applying Lemma
 \ref{comp} we get
 that $\overline{\D_{Lie}(B)}\bigcap \K(B) = \{T\in \K(\B): T^*C_1(H)\subset C_1^0(H)\}$.
 \end{proof}

 \medskip

 \begin{lemma}\label{root} Let $H$ be a Hilbert space, $A, B \in \K(H)$.
 Then there exist $K, X, Y\in \K(H)$ such that
$A+K = X^2$ and $B-K = Y^2$.
\end{lemma}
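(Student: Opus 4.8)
The plan is to prove Lemma \ref{root}, which asserts that for $A,B\in\K(H)$ there exist $K,X,Y\in\K(H)$ with $A+K=X^2$ and $B-K=Y^2$. The key observation is that the two equations share the single unknown $K$: if I can arrange $A+K$ and $B-K$ to each be a square of a compact operator, I am done. A natural strategy is to reduce the problem to the question of which compact operators are squares of compact operators, and then to choose $K$ so that both $A+K$ and $B-K$ land in the class of squares.

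First I would record the following sufficient condition for being a square: if $T\in\K(H)$ and $T$ acts, up to unitary equivalence, as an operator of the form $\begin{pmatrix}0&S\\0&0\end{pmatrix}$ on $H\oplus H$ (a strictly "upper triangular" nilpotent of square zero whose square vanishes) this is too special, so instead the cleaner route is: any compact operator $C$ on $H$ can be written as $C=X^2$ for some compact $X$ provided we allow ourselves to enlarge the space, since on $H\oplus H$ the operator $\begin{pmatrix}0&C\\I&0\end{pmatrix}$ has square $\begin{pmatrix}C&0\\0&C\end{pmatrix}$ — but the identity is not compact. This forces the real idea: I would use that the matrix $\begin{pmatrix}0&C\\0&0\end{pmatrix}$ on $H\oplus H$ is nilpotent, hence its own "square root" issues are trivial, and then realize $A+K$ and $B-K$ as such block operators. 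Concretely, identify $H$ with $H_1\oplus H_2$ (two copies of a Hilbert space unitarily isomorphic to $H$), and look for $X,Y$ of the off-diagonal nilpotent form so that $X^2$ and $Y^2$ are the desired diagonal blocks; the cross term $K$ is exactly the freedom needed to absorb the discrepancy between the given $A,B$ and genuine squares.

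The precise execution I envisage is: decompose $H\cong H\oplus H$ and write, with respect to this decomposition, operators $A=\begin{pmatrix}A_{11}&A_{12}\\A_{21}&A_{22}\end{pmatrix}$ and similarly for $B$. I would then choose $K$ with $A+K=\begin{pmatrix}0&U\\0&0\end{pmatrix}^2=0$ — but since the square of a strictly upper-triangular $2\times2$ block is again strictly upper triangular and hence generally nonzero only in one corner, the better target is to make $A+K$ unitarily equivalent to an operator with a compact square root by arranging it to have a two-sided factorization. The cleanest approach is to take $X=\begin{pmatrix}0&I\\A&0\end{pmatrix}$-style blocks failing compactness again; so the honest plan is to set $X^2$ equal to a diagonal operator $\begin{pmatrix}A&0\\0&*\end{pmatrix}$ by choosing $X=\begin{pmatrix}0&C\\D&0\end{pmatrix}$ with $X^2=\begin{pmatrix}CD&0\\0&DC\end{pmatrix}$, and then picking compact $C,D$ with $CD=A$ (possible whenever we do not insist $DC$ be prescribed), simultaneously solving the $B$ equation by the symmetric device and letting $K$ account for the off-diagonal bookkeeping.

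The main obstacle, and the crux of the lemma, is precisely that an arbitrary compact operator need not be a square of a compact operator (spectral obstructions at negative reals, and the Fredholm/index and root-subspace structure of the nonzero spectrum can forbid square roots). The role of $K$ is to destroy these obstructions: by adding a suitable compact perturbation one can push both $A+K$ and $B-K$ into the "quasinilpotent-plus-controlled-spectrum" regime where square roots exist. I expect the hard step to be constructing this single $K$ that works for both equations at once; I would handle it through the $H\cong H\oplus H$ factorization trick above, where the doubling gives enough room to realize $A+K=CD$ and $B-K=C'D'$ as products of compacts (every compact factors as a product of two compacts), and the freedom in choosing the factorizations lets the same $K$ serve both sides. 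Once both are products $CD$ and $C'D'$ of compact operators, the block operators $\begin{pmatrix}0&C\\D&0\end{pmatrix}$ and its analogue have the prescribed squares on the diagonal, completing the proof after identifying $H\oplus H$ back with $H$.
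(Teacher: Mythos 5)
There is a genuine gap: your proposal never arrives at a construction that works, and the route you finally settle on cannot work. Since $K$ is a free unknown, the first thing to do is add the two equations: the lemma is \emph{equivalent} to writing $A+B=X^{2}+Y^{2}$ as a sum of two squares of compact operators, after which $K:=B-Y^{2}$ does the job automatically. You note that the two equations share the unknown $K$ but never perform this elimination, and as a result you keep trying to make $A+K$ and $B-K$ squares separately and to reconcile the two choices of $K$ at the end --- the reconciliation is exactly the step you leave as an assertion (``the freedom in choosing the factorizations lets the same $K$ serve both sides''), with no argument. Worse, the block mechanism itself is structurally doomed: for $X=\bigl(\begin{smallmatrix}0&C\\ D&0\end{smallmatrix}\bigr)$ one always gets $X^{2}=\bigl(\begin{smallmatrix}CD&0\\ 0&DC\end{smallmatrix}\bigr)$, which is block diagonal with respect to the fixed decomposition $H\cong H\oplus H$, whereas $A+K=A+B-Y^{2}$ is an essentially arbitrary compact operator and need not be reducible by any such decomposition (there exist irreducible compact operators, e.g.\ suitable compact weighted shifts). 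Even for block-diagonal targets the two blocks $CD$ and $DC$ have the same nonzero spectrum, so they cannot be prescribed independently; the factorization ``every compact operator is a product of two compacts,'' while true, does not give you control of the second diagonal block.

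The paper's proof is short and entirely different: write $A+B=M+iN$ with $M,N$ hermitian (hence normal) compact. By the spectral theorem a normal compact operator $\sum_k\lambda_k\langle\cdot,e_k\rangle e_k$ has a compact normal square root $\sum_k\mu_k\langle\cdot,e_k\rangle e_k$ with $\mu_k^{2}=\lambda_k$ (choose any branch; $|\mu_k|=|\lambda_k|^{1/2}\to 0$). Applying this to $M$ and to $iN$ gives compact $X,Y$ with $A+B=X^{2}+Y^{2}$, and then $K=B-Y^{2}$ satisfies $A+K=X^{2}$, $B-K=Y^{2}$. Your instinct that ``an arbitrary compact operator need not be a square of a compact operator'' is what makes the lemma nontrivial, but the resolution is not a perturbation into a ``nice'' class; it is that \emph{two} squares always suffice because the real and imaginary parts are normal.
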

\begin{proof} Write $A+B$ in the form $M+iN$ where $M,N$ are
hermitian. Then both components are normal and compact hence $M =
X^2$, $N = Y^2$ with compact $X,Y$. So $A+B = X^2 + Y^2$. Set $K = B
- Y^2$. Then $A+K = X^2$, $B-K = Y^2$.
\end{proof}

\mm

\begin{theorem} $\overline{\D_{Lie}(\K(H))} = \overline{\M_{Lie}(\K(H))}$.
\end{theorem}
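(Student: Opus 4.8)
The plan is to prove the nontrivial inclusion $\overline{\M_{Lie}(\K(H))} \subseteq \overline{\D_{Lie}(\K(H))}$, since the reverse inclusion is immediate from the fact that $\D_{Lie} \subseteq \M_{Lie}$ always holds. Write $B = \K(H)$ and let $\Phi \in \M_{Lie}(B)$ be an elementary operator $\Phi = \sum_k L_{a_k} R_{b_k}$ with $\sum_k a_k b_k = \sum_k b_k a_k = 0$. The previous corollary identifies $\overline{\D_{Lie}(B)} \cap \K(B)$ as the space of all \emph{compact} operators $T$ on $B$ with $T^* B^* \subseteq C_1^0(H)$, i.e.\ all compact $T$ whose adjoint lands in the trace-zero ideal. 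So the strategy is twofold: first verify that a general $\Phi \in \M_{Lie}(B)$ satisfies the trace condition $\Phi^* B^* \subseteq C_1^0(H)$, and second deal with the fact that $\Phi$ itself need not be compact, so the corollary does not apply directly to $\Phi$.

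For the trace condition I would compute, for any $x \in B$ and $y \in C_1(H)$, that $(\Phi^* y)(x) = y(\sum_k a_k x b_k) = \operatorname{tr}\bigl(y \sum_k a_k x b_k\bigr) = \operatorname{tr}\bigl(\sum_k b_k y a_k \, x\bigr)$, so $\Phi^* y = \sum_k b_k y a_k$ as a trace-class operator; taking $x$ adjacent and using the cyclicity of the trace together with the defining relation $\sum_k b_k a_k = 0$ (and $\sum_k a_k b_k = 0$) shows $\operatorname{tr}(\Phi^* y) = 0$, i.e.\ $\Phi^* y \in C_1^0(H)$. This is exactly condition (i) of the earlier analysis; it is the easy part, essentially the same computation that appears in the proof of Theorem \ref{tanya}.

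The main obstacle is compactness: the corollary describes only $\overline{\D_{Lie}(B)} \cap \K(B)$, whereas a generic $\Phi = \sum_k L_{a_k} R_{b_k}$ is not a compact operator on the (infinite-dimensional) space $B = \K(H)$. Here I would use Lemma \ref{root}, which is plainly placed just before the theorem for this purpose. The idea is to reduce an arbitrary $\Phi \in \M_{Lie}(B)$ to a sum of pieces of the form $L_a R_b$ with $a,b$ compact and, after the correction furnished by Lemma \ref{root}, arrange each left and each right coefficient to be a \emph{square} of a compact operator; products $L_{X^2} R_{Y^2} = (L_X)^2 (R_Y)^2$ built from squares, combined with the relation-preserving bookkeeping of Lemma \ref{semiideals}, produce genuinely compact operators on $B$ to which the corollary does apply. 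Concretely, given the two relations $\sum a_k b_k = \sum b_k a_k = 0$, one rewrites $\Phi$ using a telescoping/splitting of the coefficients so that the resulting elementary operators are compact on $B$ and lie in the semiideal $\D_{Lie}(B)\,El(B)\,\D_{Lie}(B) \subseteq \M_{Lie}(B)$; Lemma \ref{root} supplies the compact $K, X, Y$ needed to absorb the non-compact part into squares while preserving the two trace relations.

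Once each building block is exhibited as a compact operator satisfying the trace-zero adjoint condition, the earlier corollary places it in $\overline{\D_{Lie}(B)} \cap \K(B) \subseteq \overline{\D_{Lie}(B)}$; summing and taking norm-closure yields $\Phi \in \overline{\D_{Lie}(B)}$. Therefore $\overline{\M_{Lie}(B)} \subseteq \overline{\D_{Lie}(B)}$, and with the trivial reverse inclusion we conclude $\overline{\D_{Lie}(\K(H))} = \overline{\M_{Lie}(\K(H))}$. I expect the delicate step to be organizing the reduction so that the coefficients become squares \emph{and} the two relations \eqref{dir} and \eqref{conver} are maintained throughout; this is precisely where Lemma \ref{root} and the semiideal structure of Lemma \ref{semiideals} must be combined carefully.
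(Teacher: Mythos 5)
Your first half is right and matches the paper: for $T=\sum_k L_{a_k}R_{b_k}$ with all $a_k,b_k\in\K(H)$ one computes $T^*u=\sum_k b_kua_k$, checks $\operatorname{tr}(T^*u)=\operatorname{tr}(u\sum_k a_kb_k)=0$, and invokes the corollary to Lemma \ref{comp}. But note that such a $T$ is \emph{already compact} on $B=\K(H)$: if $a,b$ are finite rank then $a\K(H)b$ is finite-dimensional, so $L_aR_b$ is finite rank, and compactness of general $L_aR_b$ with $a,b\in\K(H)$ follows by norm approximation. So your stated obstacle --- that ``a generic $\Phi=\sum_k L_{a_k}R_{b_k}$ is not compact'' --- mislocates the difficulty. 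The actual source of non-compactness is that $\M_{Lie}(\K(H))$ contains elements whose coefficients lie in the \emph{unitization}, i.e.\ elements of the form $T=\sum_i L_{a_i}R_{b_i}+R_c+L_d$ with $\sum_i a_ib_i+c+d=\sum_i b_ia_i+c+d=0$; the one-sided multiplications $R_c$ and $L_d$ are not compact operators on $\K(H)$ even for compact $c,d$. Your proposal never isolates these terms, and this is where the gap lies.

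Once the terms $R_c+L_d$ are isolated, the mechanism is not the identity $L_{X^2}R_{Y^2}=(L_X)^2(R_Y)^2$ you suggest (a two-sided product with compact coefficients needs no help), but rather the conversion of a \emph{one-sided} multiplication by a square into a \emph{two-sided} one modulo $\D_{Lie}$. Lemma \ref{root} gives $k,x,y\in\K(H)$ with $c-k=x^2$, $d+k=y^2$, so modulo the inner derivation $\delta_k\in\D_{Lie}$ one has $T\equiv\sum_i L_{a_i}R_{b_i}+R_{x^2}+L_{y^2}$. One then sets $S=\sum_i L_{a_i}R_{b_i}+L_xR_x+L_yR_y$, which is compact, lies in $\M_{Lie}$ (the two trace relations survive because $c+d=x^2+y^2$), and hence is in $\overline{\D_{Lie}}$ by your first half. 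The remaining difference $R_{x^2}-L_xR_x$ (and likewise $L_{y^2}-L_yR_y$) corresponds to the tensor $1\ot x^2-x\ot x$, which lies in $\T_{Lie}$ of the one-generator algebra by Theorem \ref{polyn} (equivalently, by the identity $2(1\ot x^2-x\ot x)=(x\ot 1-1\ot x)^2-(x^2\ot 1-1\ot x^2)$ of Lemma \ref{ident}), hence maps into $\D_{Lie}(\K(H))$. This one-generator step is the essential idea your sketch is missing; without it, ``telescoping/splitting so that the resulting elementary operators are compact'' does not actually dispose of $R_c$ and $L_d$, and the appeal to Lemma \ref{semiideals} does not substitute for it.
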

\begin{proof} Denote $\K(H) = \A$. Let $T = \sum L_{a_i}R_{b_i}\in \M_{Lie}(\A)$, where
$a_i, b_i \in \A$. This is a compact operator on $\A$. For any $u\in
C_1^0(H)$ and $x\in \A$ we have $$(T^*u)(x) = u(Tx) = \text{tr }uTx
= \text{tr
 } u\sum a_ixb_i = \text{tr } \sum b_iua_ix = (\sum b_iua_i)(x)$$
 (we use the same notation for a nuclear operator
 and the corresponding functional on the space of operators).
 So $T^*u = \sum b_iua_i$ and $\text{tr }T^*u =  \text{tr
}\sum a_ib_i u = 0$. Hence $T^*\A^*\subset C_1^0(H)$. By Lemma
\ref{comp}, $T\in \overline{\D_{Lie}(\A)}$.

Note that $\M_{Lie}(\A)$ is generated by elements of the form  $T
= \sum L_{a_i}R_{b_i} + R_c + L_d$, where $a_i, b_i, c, d \in \A$,
$\sum_ia_ib_i + c + d = \sum_i b_ia_i + c + d = 0$ . So it
suffices to prove that any such operator belongs to
$\overline{\D_{Lie}(\A)}$. By Lemma \ref{root}, there exist
elements $k, x, y\in \A$ with $c-k = x^2$, $d+k = y^2$. Then $T =
\sum L_{a_i}R_{b_i} + R_{x^2} + L_{y^2}$. Let us consider an
operator $S = \sum L_{a_i}R_{b_i} + L_xR_x + L_yR_y$. Then $S$ is
compact and belongs to $\M_{Lie}(\A)$ because $\sum_ia_ib_i + x^2
+ y^2 = \sum_ib_ia_i+x^2+y^2 =0$. By what we proved above, $S\in
\overline{\D_{Lie}(\A)}$. Let us show that $T-S\in
\overline{\D_{Lie}(\A)}$. For this we should prove that $R_{x^2} -
L_xR_x\in \overline{\D_{Lie}(\A)}$ and $L_{y^2} - L_yR_y\in
\overline{\D_{Lie}(\A)}$.

Consider the homomorphism $\phi$ from the free algebra $\mathcal F$
with one generator $a$ to $\A$, which sends $a$ to $x$.
Correspondingly we have a homomorphism $\gamma$ from $\mathcal
F\otimes \mathcal F$ to $El(\A)$: $\gamma(u{\otimes}v) =
L_{\phi(u)}R_{\phi(v)}$. Clearly $\gamma(\N_{Lie}(\mathcal F))
\subset \M_{\text{\emph{Lie}}}(\A)$, $\gamma(\T_{Lie}(\mathcal F))
\subset \D_{\text{\emph{Lie}}}(\A)$, $\gamma(1\otimes a^2 - a\otimes
a) = R_{x^2} - L_xR_x$. By Theorem \ref{polyn}, $\N_{Lie}(\mathcal
F) = \T_{\text{\emph{Lie}}}(\mathcal F)$; since $1\otimes a^2 -
a\otimes a \in \N_{Lie}(\mathcal F)$ we get $R_{x^2} - L_xR_x\in
\overline{\D_{Lie}(\A)}$. Using the same arguments we obtain
$L_{y^2} - L_yR_y\in \overline{\D_{Lie}(\A)}$.

So $S$ and $T-S$ are in $\overline{\D_{Lie}(\A)}$ whence $T\in
\overline{\D_{Lie}(\A)}$.
\end{proof}

\section{Applications to Lie ideals}

Recall that a $C^*$-algebra $F$ is called uniformly hyperfinite
(UHF) if it is the closure of the union of an increasing sequence
of $C^*$-subalgebras $F_n$ isomorphic to full matrix algebras.

Let us denote by $F_{\infty}$ the union of the algebras $F_n$.

It is known that $F$ has a unique normalized trace $\tau$. We set
$F_{\tau} = \Ker \tau$.

Let $\I$ be the identity operator on $F$. We will denote by
$\overline{\D_{Lie}(F)+\mathbb{C}\I}^s$ the closure
 of the unitalization of the algebra $\D_{Lie}(F)$ in the strong operator
 topology.
\begin{lemma}\label{UHF}
Let $F$ be a uniformly hyperfinite algebra. Then there is a sequence
$P_n$ of finite rank norm-one
 projections in  $\overline{\D_{Lie}(F)+\mathbb{C}\I}^s$
  which strongly tends to  $\I$.
\end{lemma}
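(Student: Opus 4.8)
The plan is to exhibit the required projections as the trace\nobreakdash-preserving conditional expectations $E_n\colon F\to F_n$, and to realize each of them as a strong limit of finite averages of inner automorphisms that already lie in the \emph{norm} closure of $\D_{Lie}(F)+\mathbb{C}\I$. Since $F_n$ is finite\nobreakdash-dimensional, $E_n$ is automatically a finite rank norm\nobreakdash-one idempotent, and the density of $F_\infty$ forces $E_n\to\I$ strongly; the whole difficulty is membership in $\ov{\D_{Lie}(F)+\mathbb{C}\I}^s$.

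First I would record that for every skew\nobreakdash-Hermitian $a\in F$ the conjugation $\mathrm{Ad}(e^a)=L_{e^a}R_{e^{-a}}$ lies in $\ov{\D_{Lie}(F)+\mathbb{C}\I}$ in norm. Indeed, $L_a$ and $R_a$ commute, so $\exp(\delta_a)=L_{e^a}R_{e^{-a}}$, and the series $\exp(\delta_a)=\I+\sum_{k\ge 1}\delta_a^{\,k}/k!$ converges in operator norm, all its positive\nobreakdash-degree terms lying in $\D_{Lie}(F)$. Because the exponential map is onto the unitary group of each matrix algebra $F_m$, this yields $\mathrm{Ad}(u)\in\ov{\D_{Lie}(F)+\mathbb{C}\I}$ for every unitary $u\in F_\infty$, and hence every finite average $\frac1N\sum_j\mathrm{Ad}(u_j)$ of such conjugations lies there too.

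Next, fixing $n$, I would use the tensor splitting $F=F_n\ot D_m\ot C_m$ with $D_m=F_n'\cap F_m\cong M_{l_m}$ and $C_m=F_m'\cap F$. Taking the $l_m^2$ Weyl unitaries $w_1,\dots,w_{l_m^2}$ of $D_m$, the twirl $R_m=\frac{1}{l_m^2}\sum_j\mathrm{Ad}(w_j)$ equals $\mathrm{id}_{F_n}\ot\tau_{D_m}\ot\mathrm{id}_{C_m}$, i.e.\ the conditional expectation onto $D_m'\cap F$, since $\frac{1}{l_m^2}\sum_j w_j y w_j^*=\tau(y)\,1$ for $y\in M_{l_m}$. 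By the previous step each $R_m\in\ov{\D_{Lie}(F)+\mathbb{C}\I}\subseteq\ov{\D_{Lie}(F)+\mathbb{C}\I}^s$ and $\|R_m\|\le 1$. Compatibility of the trace\nobreakdash-preserving expectations gives $R_m(x)=E_n(x)$ for every $x\in F_N$ once $m\ge N$; as $F_\infty$ is dense and the $R_m$ are contractions, $R_m\to E_n$ strongly, whence $E_n\in\ov{\D_{Lie}(F)+\mathbb{C}\I}^s$.

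Finally, each $E_n$ is a norm\nobreakdash-one idempotent with finite\nobreakdash-dimensional range $F_n$, hence a finite rank norm\nobreakdash-one projection; and since $E_n(x)=x$ for $x\in F_N$ whenever $n\ge N$, the contractions $E_n$ tend strongly to $\I$. Setting $P_n:=E_n$ finishes the argument. The main obstacle is the passage from the generators $\delta_a$ to the conjugations $\mathrm{Ad}(u)$ and the verification that the Weyl twirls $R_m$ really converge strongly to the expectation onto the finite\nobreakdash-dimensional $F_n$; the surjectivity of the exponential map on unitary groups and the consistency of the trace\nobreakdash-preserving expectations are precisely what make both steps go through.
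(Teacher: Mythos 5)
Your proof is correct, and while it shares the paper's overall architecture --- the $P_n$ are the trace-preserving conditional expectations onto $F_n$, realized as strong limits of averages of inner conjugations by unitaries in the relative commutant --- it replaces both of the paper's key technical ingredients. First, to place the conjugations into the closure of $\D_{Lie}(F)+\mathbb{C}\I$, the paper observes that $L_{u}R_{u^*}-L_1R_1$ lies in $\M_{Lie}(F_n)$ and then invokes Theorem \ref{tanya} (the equality $\T_{Lie}=\N_{Lie}$ for full matrix algebras) to conclude that it lies in $\D_{Lie}(F_n)$; you instead exponentiate the derivation, $\exp(\delta_a)=L_{e^a}R_{e^{-a}}$, so that the norm-convergent series exhibits $\mathrm{Ad}(e^a)-\I$ directly as a norm limit of elements of $\D_{Lie}(F)$, and surjectivity of $\exp$ onto the unitary group of each matrix algebra $F_m$ does the rest. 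This makes the lemma logically independent of the algebraic results of Section 2, which is a genuine simplification of the dependencies. Second, where the paper averages over Haar measure on the full unitary group of the relative commutant and then approximates the integral by Riemann sums over a fine measurable partition, you use the exact finite twirl over the $l_m^2$ Weyl unitaries of $D_m=F_n'\cap F_m$, which yields the expectation $\mathrm{id}_{F_n}\ot\tau_{D_m}\ot\mathrm{id}_{C_m}$ on the nose, with no approximation step; the passage $R_m\to E_n$ and $E_n\to\I$ in the strong topology is then the same density-plus-contractivity argument in both versions. Both routes are sound; yours is more self-contained and avoids the measure-theoretic approximation, while the paper's is shorter given that Theorem \ref{tanya} is already in hand.
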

\begin{proof}
It is well known that $F$ can be realized as  the infinite C*-tensor
product $M_1\ot M_2\ot ...$ of matrix algebras $M_i =
M(n_i,\mathbb{C})$. We denote by $F_n$ the product of the first $n$
factors. Then $F_n'$, the commutant of $F_n$ in $F$, is the product
of all factors from $n+1$ to $\infty$.

Let $K_n$ be the operator
\begin{equation}\label{amen}
 x\to
\int_{U(F_n)}uxu^*du
\end{equation}
 where $U(F_n)$ is the unitary
group of $F_n$.

\mm

 Claim 1. $\|K_nx - \tau(x)1\| \to 0$ for each $x\in F$.

Indeed since $\|K_n\| = 1$, it suffices to check this for $x\in
F_m$. But for $n=m$, it is well known that $K_n(x) = \tau(x)1$.
Hence the same equality holds for $n
> m$.

\mm

Let us denote $\overline{\D_{Lie}(F)}^s$ by $\E$, for brevity.

 Claim 2. Let $P_0$ be the operator $x\to
\tau(x)1$. Then $P_0 - \I \in \E$.

Indeed it suffices to show that $K_n - \I \in \E$. Let $\varepsilon
> 0$. Let $W_1,...,W_N$ be measurable subsets of $U_n$ with $m(W_k)
= 1/N$ and $diam(W_k) \le \varepsilon$. Choose $u_k\in W_k$ and set
$T_n(x) = 1/N\;\sum_ku_kxu_k^*$. Then $\|T_n - K_n\|\le
2\varepsilon$. On the other hand $T_n - \I = 1/N
\sum_k(L_{u_k}R_{u_k^*} - L_1R_1) \in \E$. Indeed all operators
$L_{u_k}R_{u_k^*} - L_1R_1$ clearly belong to $\M_{Lie}(F_n)$. Since
$F_n$ is a full matrix algebra they belong to $\D_{Lie}(F_n) \s
\D_{Lie}(F) \s \E$, by Theorem \ref{tanya}.

Since $\varepsilon$ is arbitrary, $K_n-\I\in \E$.

\mm

Let us denote by $P_k$ the expectation onto the subalgebra $F_k$. It
can be calculated as the limit of operators $K_{n,k}$, which are
defined by the formula, similar to (\ref{amen}) but with integration
by the unitary group of the algebra $M_{k+1}\ot M_{k+2}\ot...\ot
M_{k+n}\s F_k'$. The arguments similar to the above show that $P_k -
\I\in\E$.

Since $P_k$ are projections onto $F_k$ and $\|P_k\| = 1$, we
conclude that $P_k\to \I$ in the strong operator topology.
\end{proof}

\mm

 The lemma implies a localization result for Lie
ideals in the projective tensor products with uniformly hyperfinite
algebras:

\begin{corollary}\label{calUHF}
Let $B$ be an arbitrary unital Banach algebra, $A = B\hat{\ot} F$,
where $F$ is a UHF algebra. Then each closed Lie ideal $L$ in $A$ is
the closure of $L_{\infty}: = L\cap (B\ot F_{\infty})$.
\end{corollary}
\begin{proof}
Let $a= \sum_jb_j\ot x_j\in L$. Then
\begin{equation}\label{twoterms}
a = \sum_jb_j\ot P_kx_j + \sum_jb_j\ot (x_j-P_kx_j)
\end{equation}
where $P_k$ are the projections constructed in Lemma \ref{UHF}.
Clearly $1\otimes \overline{\D_{Lie}(F)}^s \s
\overline{\D_{Lie}(B\hat{\otimes}F)}^s$ and this implies that
operators in $1\otimes \overline{\D_{Lie}(F)}^s$ preserve Lie ideals
of the algebra $B\hat{\otimes}F$. Since, by Lemma \ref{UHF}, $1-P_k
\in \overline{\D_{Lie}(F)}^s$, the second term in (\ref{twoterms})
belongs to $L$. Hence the first one belongs to $L$. Moreover it
belongs to the tensor product of $B$ and $F_k$ which is contained in
$B\otimes F_{\infty}$. Therefore it belongs to $L_{\infty}$. The
second term tends to $0$ when $k\to \infty$. Hence the first one
tends to $a$. We obtain that $L_{\infty}$ is dense in $L$.
\end{proof}

\medskip

Applying Theorem 4.14 of \cite{BKS} we obtain the description of
closed Lie ideals of $B\hat{\otimes}F$ in terms of Lie ideals of
$B$.

\begin{corollary}
For each closed Lie ideal $L$ of $A = B\hat{\ot}F$, there is a
closed ideal $I$ of $B$ and a closed Lie ideal $M$ of $B$ such that
$L = I\hat{\otimes}F_{\tau} + M\hat{\otimes} 1$.
\end{corollary}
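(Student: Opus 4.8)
The plan is to combine the localization result of Corollary \ref{calUHF} with the structure theorem for Lie ideals of $B \otimes F_\infty$ (Theorem 4.14 of \cite{BKS}) and then pass to closures. By Corollary \ref{calUHF}, the closed Lie ideal $L$ is the closure of $L_\infty = L \cap (B \otimes F_\infty)$, so it suffices to analyze $L_\infty$, which is a Lie ideal of the algebraic tensor product $B \otimes F_\infty$. Since $F_\infty = \bigcup_n F_n$ is an increasing union of full matrix algebras, I would first apply Theorem 4.14 of \cite{BKS} at each finite stage: each $F_n$ is a full matrix algebra $M_{k_n}(\mathbb{C})$, and $L_\infty \cap (B \otimes F_n)$ is a Lie ideal of $B \otimes F_n \cong M_{k_n}(B)$. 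The cited theorem should then describe this Lie ideal as $I_n \otimes (F_n)_\tau + M_n \otimes 1$ for some ideal $I_n$ and Lie ideal $M_n$ of $B$, where $(F_n)_\tau = F_n \cap F_\tau$ is the trace-zero part (the matrices of zero trace, matching Corollary \ref{standard}).

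The key step is to show that the ideal $I_n$ and Lie ideal $M_n$ can be chosen independently of $n$, i.e. that they stabilize or cohere into single objects $I$ and $M$. Since the $F_n$ are nested and the trace $\tau$ restricts compatibly, I would argue that the data at level $n$ embeds into that at level $n+1$; the compatibility of the decompositions forces $I_n$ and $M_n$ to form increasing (or constant) families whose unions give an ideal $I_0$ and a Lie ideal $M_0$ of $B$ with $L_\infty = I_0 \otimes F_\tau^{\,\infty} + M_0 \otimes 1$, where $F_\tau^{\,\infty} = F_\infty \cap F_\tau$. Here I would use that $F_\tau^{\,\infty}$ is exactly the union of the $(F_n)_\tau$, which is dense in $F_\tau$.

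Finally I would pass to the closure. Taking norm closures, $L = \overline{L_\infty}$ becomes $\overline{I_0 \otimes F_\tau^{\,\infty} + M_0 \otimes 1}$. Setting $I = \overline{I_0}$ (a closed ideal of $B$) and $M = \overline{M_0}$ (a closed Lie ideal of $B$), and using that the projective tensor product $\hat{\otimes}$ is the completion of the algebraic tensor product in the projective norm together with density of $F_\tau^{\,\infty}$ in $F_\tau$, I would identify the closure as $I \hat{\otimes} F_\tau + M \hat{\otimes} 1$. The main obstacle is the coherence step: ensuring that the finite-level decompositions are mutually compatible so that they assemble into a single pair $(I, M)$, rather than merely giving a decomposition at each level. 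This requires checking that the embedding $B \otimes F_n \hookrightarrow B \otimes F_{n+1}$ sends the level-$n$ decomposition into the level-$(n+1)$ one, which should follow from the uniqueness or naturality in the statement of Theorem 4.14 of \cite{BKS}, together with the fact that the normalized trace is preserved under the inclusions $F_n \subset F_{n+1}$.
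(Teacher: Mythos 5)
Your proposal follows essentially the same route as the paper, which offers no separate proof but simply combines the localization result of Corollary \ref{calUHF} with Theorem 4.14 of \cite{BKS}; your extra discussion of how the finite-level decompositions over the nested matrix algebras $F_n$ cohere into a single pair $(I,M)$ before passing to closures just fills in detail the paper leaves implicit. No issues.
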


\mm

\medskip

\section{Applications to invariant subspaces of operator Lie algebras}

A well known result of Arveson \cite{Arv} states that if an algebra
of  operators in a Hilbert space contains a maximal abelian
selfadjoint algebra (masa, for short) then either it has a
non-trivial invariant subspace or it is dense in $B(H)$ with respect
to the ultra-weak topology. We now extend this result to Lie
algebras.

Let $\D_i$ be masas in $\B(H_i)$, $i = 1,2$. All Hilbert spaces will
be assumed separable so $\D_i$ can be realized in coordinate way:
$\D_1 = L^{\infty}(X,\mu)$, $\D_2 = L^{\infty}(Y,\nu)$ acting on
$H_1 = L^2(X,\mu)$ and respectively $H_2 = L^2(Y,\nu)$ by
multiplications. Here $X$,$Y$ are metrizable locally compact spaces,
$\mu$, $\nu$ are regular measures.

 We identify a subset $K$ of $X$
with a projection in $\D_1$ (multiplication by $\chi_K$) and with
the range of this projection (the space of all functions in $
L^2(X,\mu)$, vanishing almost everywhere outside  $K$).

A set $\kappa\s X\times Y$ is called marginally null (m.n.) if it is
contained in $(P\times Y) \cup (X\times Q)$, where $P$ and $Q$ have
zero measure. For brevity, we write $\kappa = 0$ ($\kappa\neq 0$) if
$\kappa$ is (respectively is not) marginally null.

A set is called $\omega$-open if up to a m.n. set, it coincides with
the union of a countable family of "rectangulars"$\;$ $P\times Q$.
The complements to $\omega$-open sets are called $\omega$-closed.

We have to define the projections of a subset in $X\times Y$ to the
components. Let firstly for any family  $\cP = P_{\lambda}$ of
measurable subsets of $X$, define its supremum and infimum. Namely,
$\vee(\cP)$ is the subset of $X$ that corresponds to the closed
linear span of all subspaces $P_{\lambda}H$, while $\wedge(\cP)$
corresponds to  their intersection. In other words we take infimum
and supremum in the measure algebra of $(X,\mu)$ (or in the lattice
of projections of $L^{\infty}(X,\mu)$).

We set now  $$\pi_1(\kappa) = X \setminus \vee\{P: (P\times Y)\cap \kappa \text{ is m. n.}\},$$
$$\pi_2(\kappa) = Y \setminus \vee\{P: (X\times P)\cap \kappa \text{ is m. n.}\}.$$

Let us call a rectangular $P\times Q$ {\it non-essential} for a
family $\cL\s B(H_1,H_2)$, if
\begin{equation}\label{lishnie}
Q\cL P =0.
\end{equation}

We say that a set $\kappa\s X\times Y$ supports $\cL$, if any
rectangular, non-intersecting  $\kappa$, is non-essential for $\cL$.
It is known that among all $\omega$-closed sets supporting  $\cL$,
there is the smallest one (up to a m.n. set it is contained in each
supporting $\cL$ set). It is called the {\it support} of $\cL$ and
denoted $\supp \cL$.

For any  $\omega$-closed set $\kappa$, we denote by
$\M_{max}(\kappa)$ the set of all operators supported by $\kappa$.
It is well known (see for example \cite{ST}) that it is a
$\D$-bimodule and $\supp(\M_{max}(\kappa)) = \kappa$.

In what follows $X = Y$, $\mu = \nu$, $\D_1 =\D_2 = \D$, $H_1 = H_2
= H$.

\begin{lemma}\label{proj}
Let $\E$ be an uw-closed $\D$-bimodule with $\supp (\E) = \kappa$.
Then

(i) $\overline{\E H} = \pi_2(\kappa)$;

(ii) $\ker \E = X\setminus \pi_1(\kappa)$.
\end{lemma}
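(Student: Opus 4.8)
The plan is to handle (i) and (ii) simultaneously by reducing each to a single statement about which rectangles are non-essential for $\E$. Since $\E$ is a $\D$-bimodule, both $\overline{\E H}$ and $\ker\E$ are $\D$-invariant, and as $\D$ is a masa they correspond to measurable subsets of $X=Y$. For a projection $Q\subseteq Y$ in $\D$ one has $QH\perp\overline{\E H}$ precisely when $Q\E=0$, that is, when the rectangle $X\times Q$ is non-essential for $\E$; hence, as a subset of $Y$, $Y\setminus\overline{\E H}=\vee\{Q:\ X\times Q\ \text{non-essential}\}$. Dually, for $P\subseteq X$ one has $PH\subseteq\ker\E$ precisely when $\E P=0$, i.e.\ when $P\times Y$ is non-essential (here $Y$ is the identity on the range), so $\ker\E=\vee\{P:\ P\times Y\ \text{non-essential}\}$. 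Comparing these formulas with the definitions of $\pi_2(\kappa)$ and $\pi_1(\kappa)$, both equalities follow at once from the single equivalence
\[
P\times Q\ \text{is non-essential for}\ \E\quad\Longleftrightarrow\quad (P\times Q)\cap\kappa\ \text{is marginally null},
\]
applied to $X\times Q$ for (i) and to $P\times Y$ for (ii).

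The implication $\Leftarrow$ of this equivalence is immediate: if $(P\times Q)\cap\kappa$ is marginally null then $P\times Q$ does not meet $\kappa$, and since $\kappa=\supp\E$ supports $\E$ the rectangle is non-essential by the very definition of support. The \emph{converse} is the substantive direction and the point I expect to be the main obstacle. Here I would exploit the minimality of $\kappa$ among $\omega$-closed sets supporting $\E$. Assuming $P\times Q$ non-essential, set $\kappa'=\kappa\setminus(P\times Q)$; since a rectangle is $\omega$-open and the class of $\omega$-closed sets is stable under the finite unions and complements involved, $\kappa'$ is again $\omega$-closed and $\kappa'\subseteq\kappa$. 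If $\kappa'$ still supports $\E$, then minimality of $\supp\E$ forces $\kappa\subseteq\kappa'$ up to a marginally null set, i.e.\ $(P\times Q)\cap\kappa$ is marginally null, exactly as wanted.

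Thus everything comes down to verifying that $\kappa'$ supports $\E$, which is the technical heart of the argument. Consider case (i), where $\kappa'=\kappa\setminus(X\times Q)$: given a rectangle $P'\times Q'$ disjoint from $\kappa'$, I must check $Q'\E P'=0$. Splitting $Q'=(Q'\cap Q)\cup(Q'\setminus Q)$, the part over $Q'\cap Q$ is killed by non-essentiality of $X\times Q$, which says $Q\E=0$, whence $(Q'\cap Q)\E P'=0$. For the remaining part, the rectangle $P'\times(Q'\setminus Q)$ is disjoint from $X\times Q$, so it meets $\kappa$ only where it meets $\kappa'$, namely nowhere (being contained in $P'\times Q'$); hence it is non-essential by the supporting property of $\kappa$, giving $(Q'\setminus Q)\E P'=0$. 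Adding the two contributions yields $Q'\E P'=0$, so $\kappa'$ supports $\E$ and (i) follows. The proof of (ii) is obtained by interchanging the two coordinates throughout, removing $P\times Y$ from $\kappa$ in place of $X\times Q$. The only delicate points are the bookkeeping of marginally null exceptional sets and the verification that $\kappa'$ genuinely lies in the class of $\omega$-closed sets to which the minimality of $\supp\E$ applies.
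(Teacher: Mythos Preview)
Your proof is correct and follows essentially the same route as the paper: both reduce (i) and (ii) to the equivalence ``$Q\E=0$ iff $(X\times Q)\cap\kappa$ is marginally null'' (and its $P$-sided analogue), then read off the result from the definition of $\pi_i$. The paper simply asserts this equivalence as a known property of the support, whereas you supply a clean self-contained proof of the nontrivial direction via the minimality of $\supp\E$ among $\omega$-closed supporting sets; this extra care is welcome, since the paper's own presentation of the two inclusions in (i) is somewhat elliptical on precisely this point.
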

\begin{proof}
(i) Since $\E$ is an uw-closed $\D$-bimodule, $\overline{\E H}$ is closed under multiplication by functions from
$L^{\infty}(X,\mu)$ and hence, as is well known, consists of all functions from $L^2(X,\mu)$ vanishing on some
subset of $X$. Thus $\overline{\E H}\subset X$.

Let $P\bigcap \overline{\E H} \neq \emptyset$. Then $\overline{P\E H}\neq \emptyset$, that is $P\E\neq
\emptyset$, or, equivalently, $(X\times P)\bigcap \kappa\neq \emptyset$, that means $P\bigcap \pi_2(\kappa)\neq
\emptyset$. Thus $\overline{\E H} \subset \pi_2(\kappa)$.

 Let $P\bigcap \pi_2(\kappa)= \emptyset$. It is
equivalent to $(X\times P)\bigcap \kappa= \emptyset$, whence $P\E =0$ and $P\bigcap \E H = \emptyset$. Thus
$\pi_2(\kappa) \subset \E H $ and hence $\pi_2(\kappa) = \overline{\E H} $.

(ii) Let $\tilde \kappa$ be the support of $\E^{\ast}$. It is easy to see that $\tilde \kappa = \{(y, x)\;|\;
(x, y)\in \kappa\}$. Hence we have $\ker \E = (\overline{\E^{\ast}H})^{\bot} = (\pi_2(\tilde \kappa))^{\bot} =
(\pi_1(\kappa))^{\bot} = X \setminus \pi_1(\kappa)$.
\end{proof}

\begin{lemma}\label{inter}
If $\E$ is a family of operators, $P, Q$ --- projections in $ \D$ then

(i) ${\supp} (Q\E) = (X\times Q)\cap {\supp}(E)$,

(ii) ${\supp} (\E P) = (P\times Y)\cap {\supp} (\E)$.
\end{lemma}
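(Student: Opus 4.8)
The plan is to reduce both parts to a single translation identity relating non-essential rectangles for $Q\E$ to those for $\E$, and then to invoke the minimality of the support among $\omega$-closed supporting sets. First I would record the basic observation: for projections $R,S\in\D$, a rectangle $R\times S$ is non-essential for $Q\E$ precisely when $S(Q\E)R=0$; since $\D$ is abelian, $SQ$ is the projection onto $S\cap Q$, so this reads $(S\cap Q)\E R=0$, i.e. that $R\times(S\cap Q)$ is non-essential for $\E$. In words, the non-essential rectangles for $Q\E$ are exactly the non-essential rectangles for $\E$ after the second coordinate has been cut down to $Q$.

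For the inclusion $\supp(Q\E)\s(X\times Q)\cap\supp(\E)$, I would verify that $\Sigma:=(X\times Q)\cap\supp(\E)$ is an $\omega$-closed set supporting $Q\E$; minimality of the support then forces the inclusion. That $\Sigma$ is $\omega$-closed follows because $X\times Q$ is $\omega$-clopen (both it and its complement $X\times(Y\setminus Q)$ are rectangles) while $\omega$-closed sets are stable under finite intersections. To see that $\Sigma$ supports $Q\E$, take any rectangle $R\times S$ disjoint (up to a marginally null set) from $\Sigma$; then $R\times(S\cap Q)=(R\times S)\cap(X\times Q)$ is disjoint from $\supp(\E)$, hence non-essential for $\E$ by the defining property of the support, and by the translation identity $R\times S$ is non-essential for $Q\E$.

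For the reverse inclusion, write $\kappa'=\supp(Q\E)$ and consider the $\omega$-closed set $\kappa'\cup(X\times(Y\setminus Q))$. I claim it supports $\E$: if $R\times S$ is disjoint from it, then disjointness from $X\times(Y\setminus Q)$ forces $S\s Q$, so $S\cap Q=S$, while disjointness from $\kappa'$ gives $(S\cap Q)\E R=S\E R=0$, i.e. $R\times S$ is non-essential for $\E$. By minimality of $\supp(\E)$ we get $\supp(\E)\s\kappa'\cup(X\times(Y\setminus Q))$; intersecting with $X\times Q$ annihilates the second term and yields $\Sigma\s\kappa'$. Combined with the previous paragraph, this proves (i). Part (ii) is entirely symmetric, with right multiplication by $P$ cutting the \emph{first} coordinate down to $P$: here $R\times S$ is non-essential for $\E P$ iff $S\E(P\cap R)=0$, i.e. iff $(P\cap R)\times S$ is non-essential for $\E$, and the roles of $X\times Q$ and $X\times(Y\setminus Q)$ are played by $P\times Y$ and $(X\setminus P)\times Y$.

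The only genuine care needed is the bookkeeping with marginally null sets and the elementary $\omega$-topology facts (that $X\times Q$ is $\omega$-clopen, and that $\omega$-closed sets are stable under finite unions and intersections since finite intersections of rectangles are rectangles). The substantive input is the already-quoted existence of a smallest $\omega$-closed supporting set; this is exactly what lets the two one-sided inclusions combine into the desired equalities (up to marginally null sets), and I expect no analytic difficulty beyond this.
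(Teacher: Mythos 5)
Your proof is correct and follows essentially the same route as the paper's: the whole lemma reduces to the observation that, for projections $R,S\in\D$, disjointness of $R\times S$ from either side is equivalent to $SQ\E R=0$ (the paper states this directly and concludes by the fact that $\omega$-closed sets with the same disjoint rectangles coincide, while you unpack the same point via minimality of the support among $\omega$-closed supporting sets). The extra bookkeeping you supply ($\omega$-clopenness of $X\times Q$, stability of $\omega$-closed sets under finite unions and intersections) is exactly what the paper leaves implicit.
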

\begin{proof} Since both sides of (i) are $\omega$-closed,
 it suffices to prove that for all projections $R, S \in \D$,
the condition $(R\times S) \bigcap {\supp} (Q\E) = 0$ holds if and
only if $(R\times S) \bigcap (X\times Q)\cap {\supp}(E) = 0$. It is
easy to see that the both conditions are equivalent to $SQ\E R=0$.

\noindent The proof of (ii) is similar.
\end{proof}

\medskip

Let us say that a set $\kappa\s X\times X$ is {\it the graph of an order} if for each rectangular $P\times Q$
non-intersecting $\kappa$,
\begin{equation}\label{order}
\pi_2(\kappa\cap (P\times X)) \wedge \pi_1(\kappa\cap (X\times Q)) = \emptyset.
\end{equation}

After this preliminary work we turn to the  consideration of
uw-closed Lie subalgebras of $B(H)$ that contain masa.

\begin{proposition}\label{bim}
If an uw-closed Lie subalgebra $\cL\s B(H)$ contains a masa $\D$
then it is a $\D$-bimodule.
\end{proposition}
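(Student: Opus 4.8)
The plan is to reduce the bimodule condition $\D\cL\D\subseteq\cL$ to the assertion that $p\cL q\subseteq\cL$ for all projections $p,q\in\D$: every element of the masa $\D$ is a uw-limit of finite linear combinations of its projections, the multiplication operators $a\mapsto d_1ad_2$ are uw-continuous, and $\cL$ is uw-closed, so the general case follows from the projection case by approximation. The basic input is that, being a Lie algebra containing $\D$, $\cL$ is invariant under every inner derivation $\delta_d=L_d-R_d$ with $d\in\D$, and hence under the algebra they generate. First I would record, for a projection $p$, the elementary identities $\delta_p(a)=pa(1-p)-(1-p)ap$ and $\delta_p^2(a)=pa(1-p)+(1-p)ap$, which give $pa(1-p)=\frac{1}{2}(\delta_p^2+\delta_p)(a)\in\cL$ and likewise $(1-p)ap\in\cL$ for every $a\in\cL$.

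Next I would treat orthogonal compressions. If $p,q\in\D$ are projections with $pq=0$, then applying $\delta_q$ to the element $b:=pa(1-p)\in\cL$ and using $qb=0$ together with $bq=pa(1-p)q=paq$ yields $paq=-\delta_q(b)\in\cL$; thus $p\cL q\subseteq\cL$ whenever $pq=0$. For arbitrary (hence commuting) projections $p,q$, the decomposition $paq=pa((1-p)q)+(p-pq)a(pq)+(pq)a(pq)$ reduces the problem to the diagonal blocks $p_0ap_0$ with $p_0:=pq$, since $p\perp(1-p)q$ and $(p-pq)\perp pq$ make the first two summands orthogonal compressions already known to lie in $\cL$.

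The hard part is precisely these diagonal blocks, and here the derivations alone are provably insufficient: every operator in the algebra generated by the $\delta_d$ annihilates $1\in B(H)$, whereas $L_pR_p(1)=p\neq 0$, so $pap$ can never be obtained from $\{\delta_d\}$. This is where the maximality of $\D$ and the hypothesis $\D\subseteq\cL$ must enter. My plan is to fix a projection $p$, take finite partitions $p=\sum_i p_i$ by projections $p_i\in\D$, and write $\sum_{i\neq j}p_iap_j=pap-\sum_i p_iap_i$. The left-hand side is a finite sum of orthogonal compressions, hence lies in $\cL$; meanwhile, by the standard fact that the pinchings of a masa converge uw to the canonical normal conditional expectation $E_{\D}$ onto $\D$, the net $\sum_i p_iap_i$ converges uw to $pE_{\D}(a)p$. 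Since $\cL$ is uw-closed, passing to the limit gives $pap-pE_{\D}(a)p\in\cL$, while $pE_{\D}(a)p\in\D\subseteq\cL$ because $E_{\D}(a)\in\D$; therefore $pap\in\cL$. Combined with the previous paragraph this yields $p\cL q\subseteq\cL$ for all projections and, after the uw-approximation, the full bimodule property. The single delicate point is the uw-convergence of the pinchings, for which maximality of $\D$ is essential: it is exactly what forces the pinching limit back into $\D\subseteq\cL$ and so fills in the diagonal that the inner derivations cannot reach.
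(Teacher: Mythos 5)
Your overall architecture is sound and, for the off-diagonal part, genuinely more elementary than the paper's. Where the paper gets $A\cL B\subseteq\cL$ for $AB=0$ by invoking the Varopoulos-algebra result (Theorem \ref{Lie}, which rests on spectral synthesis of the diagonal), you obtain $p\cL q\subseteq\cL$ for orthogonal projections directly from polynomial identities in the inner derivations: $pa(1-p)=\tfrac12(\delta_p^2+\delta_p)(a)$ and then $paq=-\delta_q\bigl(pa(1-p)\bigr)$. Both identities check out, the reduction of general commuting projections to orthogonal blocks plus a diagonal block is correct, and the passage from projections to arbitrary $d_1,d_2\in\D$ is harmless (spectral theorem even gives norm approximation). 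The diagonal step --- finite partitions $p=\sum_i p_i$, with $pap-\sum_i p_iap_i=\sum_{i\neq j}p_iap_j\in\cL$, and maximality of $\D$ used to push the pinching limit into $\D\subseteq\cL$ --- is exactly the paper's mechanism.

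However, the justification of that diagonal step contains a genuine error: you assert that the pinchings $\sum_i p_iap_i$ converge ultraweakly to $pE_{\D}(a)p$ for ``the canonical normal conditional expectation $E_{\D}$ onto $\D$.'' For a diffuse masa such as $L^\infty(0,1)$ acting on $L^2(0,1)$ there is \emph{no} normal conditional expectation of $B(H)$ onto $\D$ (normal expectations onto a masa exist only in the atomic case), and the net of pinchings over refining partitions need not converge ultraweakly. What is true, and what the paper uses, is weaker but sufficient: the pinchings form a bounded net (each is a contraction applied to $a$), hence have uw-cluster points; along a decreasing sequence of partitions whose associated subalgebras generate $\D$ (possible since $H$ is separable), every cluster point commutes with $\D$ and therefore lies in $\D=\D'$ by maximality, hence in $\cL$. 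Since $pap-\sum_i p_iap_i\in\cL$ for every partition and $\cL$ is uw-closed, $pap-S\in\cL$ for any such cluster point $S$, giving $pap\in\cL$. So your proof survives, but only after replacing the nonexistent normal expectation by this cluster-point argument.
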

\begin{proof}
 Let us prove firstly that if $A,B\in \D$  and $AB = 0$ then $A\cL B\s \cL$.

Indeed let us choose a dense separable subalgebra $\D_0$ of $D$
which contains $A,B$. Then we may assume that $X$ is the character
space of $\D_0$ and $\D_0 = C(X)$. Now $\cL$ is a Lie submodule of a
$\D_0$-bimodule; by Theorem \ref{Lie}, it is stable under
multiplying on elements of $\D_0\widehat{\otimes}\D_0$ which vanish
on the diagonal. The product $A\otimes B$ is such an element. Hence
$A\cL B\s \cL$.

Let now $\cP$ be a decomposition $X = P_1\cup P_2\cup ... \cup P_n$
of $X$. Set $\E_{\cP}(T) = \sum_{i=1}^nP_iTP_i$, for any $T\in
B(H)$. Then, for each $T\in \cL$,  $T - \E_{\cP}(T):= \F_{\cP}(T) =
\sum_{i\neq j}P_iTP_j \in \cL$,  by the above (because $P_iP_j =
0$). If $A\in \D$ then $ AT = A\E_{\cP}(T) + A\F_{\cP}(T)$. Again
$A\F_{\cP}(T)\in \cL$ because $(AP_i)P_j = 0$. Thus $AT -
A\E_{\cP}(T) \in \cL$, for any decomposition $\cP$. Taking a
decreasing sequence $\cP_n$ of the decompositions, we may say that
$AT - S\in \cL$, for any limit point $S$ of operators
$A\E_{\cP}(T)$.

It is easy to see that $S\in \D$. Indeed $A\E_{\cP}(T) = \E_{\cP}(AT)$ commutes with all projections $P_i\in
\cP$, so it commutes with the algebra $\D(\cP)$, generated by $\cP$. Since the union of the algebras
$\D(\cP_n)$, for an appropriate  decreasing sequence $\cP_n$ of decomposition, generates $D$, each limit point
of the sequence $A\E_{\cP_n}(T)$ commutes with $D$ and hence belongs to $D$.

Since $\D\s\cL$, we get that  $AT = (AT-S) + S \in \cL$; similarly
$TA\in \cL$.
\end{proof}

\begin{theorem}\label{graph}
The following conditions on an $\omega$-closed set $\kappa\s X$ are
equivalent:

(i) $\M_{max}(\kappa)$ is a unital algebra;

(ii) $\kappa$ is the support of an algebra that contains $D$;

(iii) $\kappa$ is the support of a Lie algebra that contains $D$;

(iv) $\kappa$ is the graph of an order and contains the diagonal
$\Delta = \{(x,x): x\in X\}$ of $ X\times X$.
\end{theorem}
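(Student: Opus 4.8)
The plan is to prove the four conditions equivalent by running the cycle (i)$\Rightarrow$(ii)$\Rightarrow$(iii)$\Rightarrow$(iv)$\Rightarrow$(i). The first two implications are essentially formal. If $\M_{max}(\kappa)$ is a unital algebra, then $I\in\M_{max}(\kappa)$, so $\Delta=\supp(I)\s\supp(\M_{max}(\kappa))=\kappa$; hence every element of $\D$, being supported on $\Delta\s\kappa$, lies in $\M_{max}(\kappa)$, and thus $\M_{max}(\kappa)$ is an algebra containing $\D$ whose support is $\kappa$, which is (ii). Since every associative algebra is a Lie algebra under the commutator, (ii)$\Rightarrow$(iii) is immediate.

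For (iv)$\Rightarrow$(i) I would first note that $\Delta\s\kappa$ forces $\D\s\M_{max}(\kappa)$, so $\M_{max}(\kappa)$ is unital and it only remains to prove it is closed under multiplication. Fix $T_1,T_2\in\M_{max}(\kappa)$ and a rectangular set $P\times Q$ disjoint from $\kappa$; I must check $QT_1T_2P=0$. Writing $R=\pi_2(\kappa\cap(P\times X))$ and $S=\pi_1(\kappa\cap(X\times Q))$, Lemmas \ref{inter} and \ref{proj} identify $R=\overline{(\M_{max}(\kappa)P)H}$ and $X\setminus S=\Ker(Q\M_{max}(\kappa))$. Consequently $RT_2P=T_2P$ (the range of $T_2P$ lies in $R$) and $QT_1S=QT_1$ (the kernel of $QT_1$ contains $X\setminus S$), so that $QT_1T_2P=(QT_1S)(RT_2P)=QT_1(SR)T_2P$. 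Condition (iv) is precisely $S\wedge R=\emptyset$ for this rectangle, giving $SR=0$ and hence $QT_1T_2P=0$, as required.

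The heart of the argument is (iii)$\Rightarrow$(iv). Let $\kappa=\supp(\cL)$ for a Lie algebra $\cL\supset\D$. Passing to the uw-closure of $\cL$ (which leaves the support unchanged, since $Q\cL P=0$ is a uw-closed condition, and which is still a Lie algebra containing the masa $\D$), I may apply Proposition \ref{bim} to conclude that $\cL$ is a $\D$-bimodule. Since $\Delta=\supp(\D)\s\supp(\cL)=\kappa$, the diagonal lies in $\kappa$; and for any rectangle $P\times Q$ disjoint from $\kappa$ we get both $QP=0$ (because $\Delta\cap(P\times Q)=\emptyset$) and $Q\cL P=0$ (non-essentiality). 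Translating $(\ref{order})$ through Lemmas \ref{inter} and \ref{proj} exactly as in the previous paragraph, the order condition for this rectangle is equivalent to $Q\cL R=0$ with $R=\overline{\cL PH}$, i.e. to $Q\cL\cL P=0$, meaning $QT'SP=0$ for all $S,T'\in\cL$.

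This last equality is the \textbf{main obstacle}, because $\cL$ is only a Lie algebra, so the product $T'S$ need not lie in $\cL$. I would overcome it by combining the bracket with the bimodule structure. From $[X,Y]\in\cL$ and $Q\cL P=0$ one gets $QXYP=QYXP$ for all $X,Y\in\cL$. Fixing $S,T'\in\cL$ and an arbitrary projection $E\in\D$, and using $ES,T'E\in\cL$, I apply this identity to the pairs $(T',ES)$ and $(T'E,S)$, obtaining $QT'ESP=QEST'P$ and $QT'ESP=QST'EP$, hence $QE(ST')P=Q(ST')EP$. Since $E,P,Q$ are mutually commuting projections of $\D$, this says that $Q(ST')P$ commutes with $E$; letting $E$ range over all projections of $\D$ we conclude $Q(ST')P\in\D'=\D$. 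Writing $d:=Q(ST')P\in\D$, one has $d=QdP$, which for a multiplication operator equals $\chi_Q\,d\,\chi_P=0$ because $QP=0$. Thus $Q\cL\cL P=0$, establishing $(\ref{order})$, and together with $\Delta\s\kappa$ this shows $\kappa$ is the graph of an order, closing the cycle.
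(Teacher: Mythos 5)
Your proof is correct and follows the same overall skeleton as the paper's: the same cycle (i)$\Rightarrow$(ii)$\Rightarrow$(iii)$\Rightarrow$(iv)$\Rightarrow$(i), the same reduction to an uw-closed $\cL$, the same appeal to Proposition \ref{bim} to get the bimodule structure, and the same translation of (\ref{order}) into $Q\cL\cL P=0$ via Lemmas \ref{inter} and \ref{proj}. The one place where you genuinely diverge is the mechanism for proving $Q\cL\cL P=0$ in (iii)$\Rightarrow$(iv). The paper disposes of it in one line: since $PQ=0$ and $Q\cL,\ \cL P\s\cL$, one has $Q\cL\cL P=[Q\cL,\cL P]\s\cL$, and then $Q\cL\cL P=Q(Q\cL\cL P)P\s Q\cL P=0$. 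You instead symmetrize products through the bracket ($QXYP=QYXP$), twist by an arbitrary projection $E\in\D$ to show that $Q(ST')P$ commutes with every projection of the masa, hence lies in $\D'=\D$, and then kill it with $QP=0$. Both arguments are valid and use exactly the same ingredients (the bracket, the bimodule property, and $PQ=0$); the paper's identity is shorter and makes the role of the Lie structure more transparent, while your version is a reasonable fallback if one does not spot that the second term of $[Q\cL,\cL P]$ vanishes. A minor remark on your (i)$\Rightarrow$(ii): you derive $\Delta\s\kappa$ from $\supp(I)=\Delta$ and monotonicity of supports, whereas the paper simply uses that $\M_{max}(\kappa)$ is a unital $\D$-bimodule, hence contains $\D=\D\cdot 1$; both are fine.
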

\begin{proof} We will denote $\M_{max}(\kappa)$ by $\M$, for
brevity.

$(i) \Rightarrow (ii)$. Since $\M_{max}(\kappa)$ is a $\D$-bimodule
 we have that $\D\s \M_{max}(\kappa)$ if (i) holds.

$(ii) \Rightarrow (iii)$ is evident.

$(iii)\Rightarrow (iv)$. Let $\cL$ be a Lie algebra, $\D\s \cL$ and
$\supp(\cL) = \kappa$. Non restricting generality we may assume that
$\cL$ is uw-closed. Hence, by Proposition \ref{bim}, it is a
$\D$-bimodule.

It follows from the inclusion $\D\s \cL$ that $\supp D \s \supp \cL$
that is $\Delta\s \kappa$.

Let $\kappa\cap (P\times Q) = 0$. Since $\Delta\s\kappa$, $P\cap Q =
0$. Then $Q\cL\cL P = [Q\cL, \cL P] \s \cL $, hence $Q\cL\cL P\s
Q\cL P = 0$. It follows that $\cL PH\s \ker(Q\cL)$. By Lemma
\ref{proj}, $\pi_2(\supp(\cL P)) \cap \pi_1(\supp(Q\cL)) = 0$. This
exactly means (if one takes into account Lemma \ref{inter}) that
(\ref{order}) holds. By Lemma \ref{graph}, $\kappa$ is the graph of
an order.

(iv)$\Rightarrow$ (i). If $P\times Q$ does not intersect $\kappa$ then (\ref{order}) holds. Set
$P_1=\pi_1(\kappa\cap(X\times Q))$, $Q_1 = \pi_2(\kappa\cup (P\times X))$. By Lemma \ref{inter}, ${\supp}( \M P)
= (P\times X)\cap \kappa$, ${\supp} Q\M = (X\times Q) \cap \kappa$. By Lemma \ref{proj}, $\overline{\M PH} =
\pi_2(\supp \M P) = \pi_2(\kappa\cup (P\times X))=Q_1$, $\ker Q\M = X\setminus \pi_1(\supp Q\M) =
\pi_1(\kappa\cap(X\times Q)) = X\setminus P_1$. But by (\ref{order}), $Q_1 \s X\setminus P_1$. This means that
$Q\M\M P = 0$, that is $P\times Q$ is non-essential for $\M^2$. Hence ${\supp}(\M^2)\s \kappa$, $\M^2\s \M$.

We proved that $\M$ is an algebra; since $\Delta\s \kappa$ it is
unital.
\end{proof}

\medskip

Let us say, for brevity, that a subspace $\M$ of $B(H)$ is {\it
irreducible} if it has no non-trivial closed invariant subspaces.
Furthermore $\M$ is {\it transitive} if $\overline{\M x} = H$ for
each non-zero vector $x\in H$. It is easy to see that if $M$ is a
unital algebra then these conditions are equivalent.

It was proved by Arveson \cite{Arv} that a transitive bimodule over
a masa is uw-dense in $B(H)$ (for non-separable $H$ it was proved in
\cite{Sh}). As a consequence each irreducible operator algebra
containing a masa is uw-dense in $B(H)$ (the density in the weak
operator topology was previously established in \cite{Arv1}). Now we
extend this result to Lie algebras.

\begin{corollary}
A Lie algebra $\cL$ of operators, containing a masa $\D$, either has
a non-trivial invariant subspace or is $uw$-dense in $B(H)$.
\end{corollary}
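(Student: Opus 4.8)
The plan is to prove the dichotomy by assuming $\cL$ has no non-trivial closed invariant subspace and showing it is uw-dense in $B(H)$. The engine will be Arveson's theorem that a \emph{transitive} masa-bimodule is uw-dense, so the whole argument is organized to reduce irreducibility of $\cL$ to transitivity. First I would replace $\cL$ by its uw-closure: this does not change the invariant-subspace lattice (a norm-closed subspace is weakly closed, and uw-convergence implies weak convergence on each vector, so $\mathrm{Lat}(\cL)=\mathrm{Lat}(\overline{\cL}^{uw})$), and the closure is again a Lie algebra containing $\D$ by separate uw-continuity of multiplication. So I may assume $\cL$ is uw-closed, and then Proposition \ref{bim} makes $\cL$ a $\D$-bimodule. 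The conceptual difficulty is that for a Lie algebra ``irreducible'' and ``transitive'' are \emph{not} a priori equivalent (unlike for unital algebras), so the real work is to bridge this gap.

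Next I would establish that $\cL$ has full support, i.e. $\kappa:=\supp\cL=X\times X$. Since every operator in $\cL$ is supported by $\kappa$, we have $\cL\subseteq\M:=\M_{max}(\kappa)$, and $\M$ is uw-closed because it is the intersection of the uw-closed conditions $QTP=0$ taken over rectangles $P\times Q$ disjoint from $\kappa$. As $\cL\subseteq\M$, every subspace invariant for $\M$ is invariant for $\cL$, so $\M$ inherits irreducibility. By Theorem \ref{graph} (the implication (iii)$\Rightarrow$(i)), $\M$ is a unital algebra containing the masa $\D$; an irreducible operator algebra containing a masa is uw-dense (Arveson, \cite{Arv}), and being uw-closed, $\M=B(H)$. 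Hence $\kappa=X\times X$, which is exactly the statement that every non-zero rectangle is essential: $Q\cL P\neq 0$ for all non-zero projections $P,Q\in\D$.

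The crux is the passage to transitivity. Fix $x\neq 0$ and set $N=\overline{\cL x}$. Because $\D\cL\subseteq\cL$, the subspace $N$ is a $\D$-submodule of $H$, hence $N=EH$ for a projection $E\in\D$; write $E'=1-E$ and let $G=\supp x$, so that $\overline{\D x}=GH$ with $G\neq 0$. The key observation is that $\cL$ is also a \emph{right} $\D$-module, so for any $T\in\cL$ and $A\in\D$ the element $TA$ lies in $\cL$, whence $(TA)x\in\cL x\subseteq EH$ and $E'TAx=0$; since $\{Ax:A\in\D\}$ is dense in $GH$, continuity yields $E'TG=0$ for every $T\in\cL$, i.e. $E'\cL G=0$. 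Thus the rectangle $G\times E'$ is non-essential, and full support forces $E'=0$, so $N=H$. This shows $\cL$ is transitive, and the transitive masa-bimodule theorem of \cite{Arv} (and \cite{Sh} for the general case) gives that $\cL$ is uw-dense in $B(H)$, completing the dichotomy. I expect this last paragraph to be the main obstacle: everything hinges on recognizing that the right-module structure lets one replace the single vector $x$ by the whole cyclic subspace $GH$, thereby converting the failure of transitivity into a non-essential rectangle, which is then excluded by the full-support conclusion of the preceding step.
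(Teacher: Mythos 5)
Your proposal is correct and follows essentially the same route as the paper: pass to the uw-closure, invoke Proposition \ref{bim} to get a $\D$-bimodule, use Theorem \ref{graph} to see that $\M_{max}(\supp\cL)$ is an algebra and Arveson's algebra density theorem to conclude $\supp\cL=X\times X$, then derive transitivity of $\cL$ from full support via the right-module structure and finish with Arveson's transitive masa-bimodule theorem. Your write-up is somewhat more explicit than the paper's (in justifying the reduction to the uw-closed case and in spelling out why $E'\cL G=0$), but the argument is the same.
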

\begin{proof}
Suppose that $\cL$ is irreducible. Let $\kappa = \supp(\cL)$ and $\A
= \M_{max}(\kappa)$. By Theorem \ref{graph} (the equivalence (i)
$\Leftrightarrow$ (iii)), $\A$ is an algebra. Since $\cL$ has no
invariant subspaces, so does $\A$. By \cite{Arv1},  $\A = B(H)$ and,
consequently,  $\kappa = X\times X$.

Non-restricting generality we may assume that $\cL$ is uw-closed. By
Proposition \ref{bim}, $\cL$ is a masa-bimodule. Let us prove that
it is transitive. Suppose that $\cL x$ is not dense in $H$, for some
$x\in H$. Let $P$ be the projection on the subspace $(\cL
x)^{\bot}$, $Q$ be the projection on $\D x$. Then $P\cL Q=0$ and,
since the subspaces  $(\cL x)^{\bot}$ and $\D x$ are invariant for
$\D$, the projections $P, Q$ belongs to $\D$. This is a
contradiction with the equality $\kappa = X\times X$. Thus $\cL$ is
a transitive masa-bimodule. Applying \cite{Arv} we conclude that
$\cL$ is $uw$-dense in $B(H)$, $\cL = B(H)$.
\end{proof}

\end{document}